\newtheorem{thm}{Theorem}
\newtheorem{cor}[thm]{Corollary}
\newtheorem{defi}[thm]{Definition}
\newtheorem{lem}[thm]{Lemma}
\newtheorem{claim}[thm]{Claim}
\newtheorem{prob}[thm]{Problem}
\newtheorem{obs}[thm]{Observation}
\theoremstyle{remark}
\def\HH{\mathcal {X}}
\def\red1{\color{red}1\color{black}}
\def\blue1{\color{blue}1\color{black}}
\def\ca{\circlearrowright}
\def\01{0\text{-}1}
\def\blfootnote{\gdef\@thefnmark{}\@footnotetext}
\begin{document}

\title{Saturation of Ordered Graphs}
\author{Vladimir Bo\v skovi\'c\thanks{Sorbonne Universit\'e, Paris Graduate School of Mathematical Sciences funded by Foundation Sciences Math\'ematiques de Paris, France.} 
	\and Bal\'azs Keszegh\thanks{Alfréd Rényi Institute of Mathematics and Eötvös Loránd University, MTA-ELTE Lendület Combinatorial Geometry Research Group, Budapest, Hungary. Research supported by the Lend\"ulet program of the Hungarian Academy of Sciences (MTA), under the grant LP2017-19/2017, by the J\'anos Bolyai Research Scholarship of the Hungarian Academy of Sciences, by the National Research, Development and Innovation Office -- NKFIH under the grant K 132696 and FK 132060 and by the ÚNKP-20-5 New National Excellence Program of the Ministry for Innovation and Technology from the source of the National Research, Development and Innovation Fund.}}
\maketitle


\begin{abstract}

Recently, the saturation problem of $0$-$1$ matrices gained a lot of attention. This problem can be regarded as a saturation problem of ordered bipartite graphs. Motivated by this, we initiate the study of the saturation problem of ordered and cyclically ordered graphs. 

We prove that dichotomy holds also in these two cases, i.e., for a (cyclically) ordered graph its saturation function is either bounded or linear. We also determine the order of magnitude for large classes of (cyclically) ordered graphs, giving infinite many examples exhibiting both possible behaviours, answering a problem of Pálvölgyi. In particular, in the ordered case we define a natural subclass of ordered matchings, the class of linked matchings, and we start their systematic study, concentrating on linked matchings with at most three links and prove that many of them have bounded saturation function.

In both the ordered and cyclically ordered case we also consider the semisaturation problem, where dichotomy holds as well and we can even fully characterize the graphs that have bounded semisaturation function.
\end{abstract}

\section{Introduction}
Extremal problems are among the most studied topics in combinatorics. We are usually interested in determining or estimating the maximum density (this is called the extremal function) of combinatorial objects avoiding a given substructure that we call the \emph{forbidden} substructure. The most important case is the case of graphs where density means the number of edges. The earliest example is the classic problem of determining the maximum number of edges in a graph without a triangle by Mantel or more generally without a complete graph on $k$ vertices by Tur\'an. Since then extremal problems are very popular and have a huge literature. Here we have no space to discuss these results in more detail, in the rest we only concentrate on the saturation problem.

In a \emph{saturation} problem, instead of the maximum density of an avoiding structure, we are interested in estimating the minimum density of an avoiding structure such that extending it in an arbitrary way it introduces the forbidden substructure. Another common variant of saturation problems is the so-called \emph{semisaturation}\footnote{Semisaturation is also sometimes called strong saturation or oversaturation.} problem \cite{furedikim}, in which we are interested in the smallest density of a (not necessarily avoiding) structure such that extending it in an arbitrary way it introduces a new copy of the forbidden substructure. In case of graphs, e.g., a saturation problem is to determine the minimum number of edges of a graph which does not contain a triangle but adding an arbitrary edge introduces a triangle. Further, it is a semisaturation problem to determine the minimum number of edges of a graph (which may contain a triangle) such that adding an arbitrary edge creates a new triangle. Erd\H os, Hajnal and Moon were the first to investigate such functions \cite{erdoshajnalmoon}, determining the minimal numbers when the forbidden structure is a complete graph on $k$ vertices. For a survey of saturation problems of graphs see \cite{graphsatsurvey}.

Besides graphs there are many further settings where in addition to the extremal problems the saturation problems are also investigated, such as set systems and $0$-$1$ matrices among others \cite{2020saturation,dudek2013minimum,ferrara2017saturation,frankl2020vc,Gerbner2013,keszegh2020induced}. The case of $0$-$1$ matrices leads us to the case of ordered graphs, our aim is to investigate their saturation functions.

\subsection{Saturation functions}

Before proceeding further we define the notions that interest us. 
An \emph{ordered (resp. cyclically ordered) graph} is a graph whose vertex set is linearly (resp. cyclically) ordered. A \emph{matching} is a graph in which every vertex has a degree one. The \emph{interval chromatic number} of a (cyclically) ordered graph $G$ is the minimum number of intervals the (cyclically) ordered vertex set of $G$ can be partitioned into so that no two vertices belonging to the same interval are adjacent in $H$. The graphs we deal with are simple, i.e., have no parallel nor loop edges. When we say that we add an edge $e$ to some graph $G$ we always assume that $e$ is not an edge of $G$.

For a non-empty graph $G$ let $sat(n,G)$ be the minimum number of edges in a graph $H$ on $n$ vertices, with the property that it does not contain $G$ as a subgraph but adding an arbitrary edge to $H$ creates a copy of $G$. For an ordered (resp. cyclically ordered) non-empty graph $G$ let $sat_<(n,G)$ (resp. $sat_\ca(n,G)$) be the number of edges in an ordered (resp. cyclically ordered) graph $H$ on vertex set of size $n$ and minimum number of edges, with the property that it does not contain $G$ as an ordered (resp. cyclically ordered) subgraph but adding an arbitrary edge to $H$ creates a copy of $G$. The case of $0$-$1$ matrices is also relevant for us, we define it equivalently on ordered bipartite graphs. For an ordered non-empty graph $G$ with interval chromatic number $2$ let $sat_{\01}(n,G)$ be the number of edges in an ordered bipartite graph $H$ on $n+n$ vertices with all edges between the first $n$ and the last $n$ vertices\footnote{i.e., $H$ has interval chromatic number $2$ and can be split into two intervals of size $n$.}
and with minimum number of edges, with the property that it does not contain $G$ as an ordered subgraph but adding an arbitrary edge to $H$ between its two parts creates a copy of $G$\footnote{By taking the incidence matrices of $G$ and $H$ we get the equivalent setting of forbidding a $0$-$1$ matrix in an $n$ by $n$ $0$-$1$ matrix. This setting is the one used usually when considering this problem.}. Note that we may have defined similarly the bipartite version of the unordered graph saturation, but as we won't discuss it in detail, this is omitted.

We say that $H$ is \emph{saturating} $G$ if $H$ avoids $G$ but adding an arbitrary edge introduces a copy of $G$. In all cases we refer to $G$ as the \emph{forbidden} graph and $H$ as the minimal saturated \emph{host} graph. 

When we do not require that the host graph $H$ avoids $G$, instead we only require that adding a new edge to $H$ creates a new copy of $G$, we get the semisaturation problem. The number of edges in minimal semisaturated graphs in the above settings give the functions $ssat(n,G)$, $ssat_<(n,G)$, $ssat_\ca(n,G)$ and $ssat_{\01}(n,G)$.

As a warm-up to these notions, notice that for the complete graph $K_k$ on $k$ vertices by definition $sat(n,K_k)=sat_<(n,K_k)=sat_\ca(n,K_k)$ and similarly the semisaturation functions are the same. As it was shown already in \cite{erdoshajnalmoon} that $sat(n,K_k)=ssat(n,K_k)=n(k-2)-\binom{k-1}{2}$, this determines the saturation number for $K_k$ in the ordered and cyclically ordered case as well.

As saturation functions make sense and so are defined only for non-empty graphs we always assume from now on that the forbidden graph is non-empty.
\subsection{A brief history}

K\'aszonyi and Tuza \cite{kaszonyituza} showed the following dichotomy: for an (unordered) graph $G$, $sat(n,G)=O(1)$ if and only if $G$ has an isolated edge and we have $sat(n,G)=\Theta(n)$ otherwise\footnote{They showed this in the case when we can forbid a set of graphs but assumed that one of them has no isolated vertices. However, for one forbidden graph $G$ the proof can be easily modified also to the case when $G$ has isolated vertices.}. 

Already in the paper of  Erd\H os, Hajnal and Moon \cite{erdoshajnalmoon} the respective question when the forbidden and host graphs are both bipartite was asked. A variant of this for which the extremal problem has a long history is when the two parts of vertices are ordered. As mentioned before, this case is usually phrased using forbidden $0$-$1$ matrices.

Studying the extremal problem for forbidden submatrices in $0$-$1$ matrices has a long history (see \cite{tardos_2019} for an introduction), however, the saturation problem was first investigated only recently by Brualdi and Cao \cite{brualdicao}. Their initial results were quickly followed by the more systematic study of Fulek and Keszegh \cite{01sat} who proved dichotomy in this setting too:  $sat_{\01}(n,G)$ is either $O(1)$ or $\Theta(n)$. Characterizing which graphs (equivalently, $0$-$1$ matrices) belong to which class turned out to be much harder. While many families of matrices belonging to $\Theta(n)$ were found, for $O(1)$ they could only find one example, which was a bipartite matching on $5$ edges (equivalently, a $5$ by $5$ permutation $0$-$1$ matrix). Soon afterwards Geneson \cite{geneson} and Berendsohn \cite{berendsohn1} found infinite many permutation matrices belonging to the class $O(1)$ and then very recently Berendsohn \cite{berendsohn2} gave a complete characterization of permutation matrices belonging to the class $O(1)$: in the bipartite ordered graph setting it can be phrased the following way: a bipartite matching graph $G$ has linear saturation function if and only if it is decomposable into two subgraphs such that each part of the vertices of $G$ is split into two intervals by these subgraphs\footnote{The part of the statement that decomposable graphs have linear saturation function is true even for non-matching graphs and was shown already in \cite{01sat}.}, see Figure \ref{fig:sat01example} for an example of how such a decomposition may look like. Despite this substantial progress, a complete characterization for non-permutation matrices is still widely unknown. In contrast to this, already in \cite{01sat} it was shown with a relatively simple proof that the semisaturation function of $0$-$1$ matrices shows the same dichotomy, and in this case the characterization is also given.

\begin{figure}[h]
	\centering
	\includegraphics[width=0.7\textwidth]{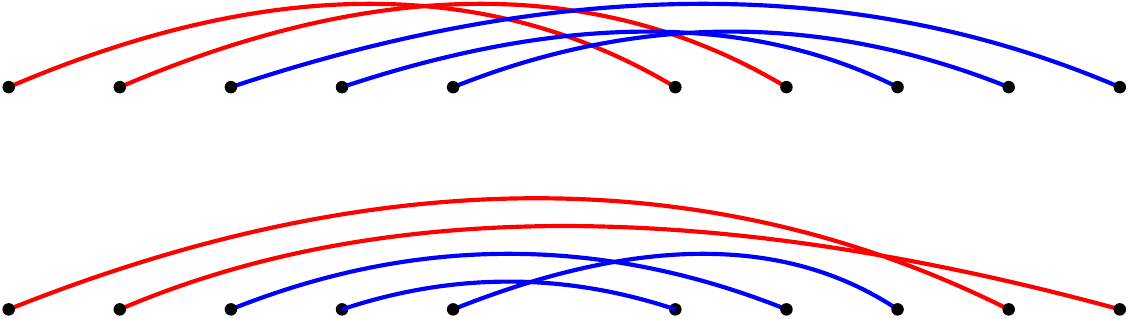}
	\caption{\label{fig:sat01example}The two types of valid decompositions of bipartite matchings into two parts.}
\end{figure}

The study of ordered graphs and their extremal problems is partially motivated by their connection to problems in combinatorial geometry, see, e.g., \cite{Brass2003} which deals with the cyclic case and its relation to the number of unit distances among $n$ points in convex position. The connection is based on the fact that in the cyclic case the vertices can be naturally represented by points in convex position and the edges by straight segments. For the extremal function there is a strong connection between the case of ordered graphs and $0$-$1$ matrices. Namely, the order of magnitude of the extremal function for the ordered case and for the $0$-$1$ matrix case can be apart from each other only by at most an $O(\log n)$ multiplicative factor \cite{Pach2006}.

\subsection{Our results}

Despite all the work on the extremal function of the different variants of ordered graphs, the saturation function was only regarded for the $0$-$1$ matrix case so far. Following up the proposal of P\'alv\"olgyi \cite{dompc}, in this paper we initiate the study of the other two variants, the ordered and cyclically ordered cases, which from a graph theoretic point of view may be even more natural than the $0$-$1$ matrix case.

Motivated by previous results on dichotomy we prove that dichotomy holds also in these two cases and we also give infinite many examples for both classes in both cases. The proofs of this in the three ordered settings are similar but also have significant differences. In particular, for the saturation function one cannot argue that the ordered and ordered bipartite cases are only $O(\log n)$ apart (unlike for the extremal function). Indeed, it turns out that if $G$ has interval chromatic number at most two then $sat_<(n,G)=\Theta(n)$, so bipartite ordered graphs all have linear saturation function in the ordered case, in contrast to their more interesting behaviour in the ordered bipartite case.

In saturation problems it is a primary problem to characterize which objects have bounded saturation functions, in some cases it is non-trivial even to find one which has bounded or non-bounded saturation function. For example, in the case of forbidding posets in the Boolean poset it was shown that the chains have bounded saturation functions \cite{Gerbner2013}. In case we forbid induced posets then it turns out that every poset has a bounded saturation function \cite{keszegh2020induced}. In case of $0$-$1$ matrices, as mentioned already, it was non-trivial to find even one matrix with bounded saturation function \cite{01sat}. This motivated P\'alv\"olgyi \cite{dompc} to ask the problem to find at least one ordered graph with bounded saturation function. We answer this question by finding an infinite class of ordered (resp. cyclically ordered) graphs with bounded saturation function.

We have seen that in the $0$-$1$ matrix case non-decomposable permutation matrices are the prime examples having bounded saturation function \cite{berendsohn2}. When regarded as graphs, permutation matrices correspond to matchings. This motivates us to concentrate on matchings in the ordered case, specifically the ones with interval chromatic number at least three, as we have mentioned that any ordered graph with interval chromatic number at most two has linear saturation function.\footnote{Matchings is usually a natural class to consider in such situations, see also, e.g., the case of ordered graph ramsey theory where they usually concentrate on ordered matchings \cite{conlon2016ordered}.} We define a natural subclass of them which we call \emph{linked matchings} and we start their systematic study. For linked matchings with at most three links and at most one minedge inside each link edge (for exact definitions see later) we determine their saturation function. Most of these small cases have very specific proofs\footnote{In some cases it was surprisingly hard to find constructions to show that the saturation function is bounded. It is worth to compare this with the ordered bipartite case, where also for a while there was only one known such graph.}. Generalizing one of these cases we get an infinite family of ordered graphs with bounded saturation function. However, in general, we are not yet able to characterize the linked matchings that have linear saturation function. 

In both the ordered and cyclically ordered case we also consider the semisaturation functions. Similar to the $0$-$1$ matrix case, dichotomy again holds and even the characterization is relatively simple.

The paper is structured as follows. Section \ref{sec:ordered} contains our results about saturation of ordered graphs. First we prove dichotomy then give several examples of graph classes with linear saturation function. Then we define the notion of a witness, which we use to show in Section \ref{sec:linked} an infinite family of linked matchings with bounded saturation function, including a treatment of linked matchings with at most three links. Section \ref{sec:cyclic} contains our results about saturation of cyclically ordered graphs. Most importantly, we prove dichotomy then give infinite many examples for both possible behaviours of the saturation function. In Section \ref{sec:ssat} we solve the semisaturation problem. In Section \ref{sec:discussion} we compare our knowledge of the three settings (bipartite ordered, ordered, cyclically ordered) and conclude the paper with a list of open problems.

\section{Saturation of ordered graphs}\label{sec:ordered}

Before phrasing and proving our results, let us fix some conventions we will use throughout the paper. Without loss of generality, we identify the vertices of an ordered graph on $n$ vertices with the positive integers $1,2, \dots ,n$ so that the order of the vertices is the same as the order of the corresponding positive integers. We imagine the vertices lying on a horizontal line placed from left to right according to their underlying order.

We can refer to edges by their two endvertices, e.g., $e=uv$ is an edge, 
where we always assume $u<v$. We write $l(e)=u$ and $r(e)=v$ for the left and right endvertices, respectively, of the edge $e=uv$. Moreover, for a vertex $v$ the vertices $v-1$ and $v+1$ are called the left and right neighbor of $v$, respectively. We say that a vertex $w\ne u,v$ is between the vertices $u$ and $v$ if $u<w<v$ and outside $uv$ otherwise. We say that an edge $uv$ is \emph{covered} by another edge $u'v'$ if $u'\le u<v\le v'$. We say that an edge $uv$ is \emph{strictly covered} by another edge $u'v'$ if $u'< u<v< v'$.

The following two types of edges of $G$ turn out to have an important role in determining if $G$ has bounded saturation function:

\begin{defi}
	An edge $uv$ of $G$ is a \emph{minedge} if there is no vertex between $u$ and $v$ and both $u$ and $v$ have degree one, i.e., it is an isolated edge connecting neighboring vertices. 
	
	An edge $uv$ of $G$ is a \emph{superedge} if there exists an edge $xy$ such that $u<x<y<v$, i.e., $uv$ strictly covers $xy$.
\end{defi}

\begin{defi}
    A graph $G$ is \emph{separable} if it can be split into non-empty edge-disjoint graphs $G_1$ and $G_2$ such that for all edges $u_1v_1$ of $G_1$ and $u_2v_2$ of $G_2$ we have $u_1<v_1<u_2<v_2$.
    
    A graph $G$ is \emph{nested} if it can be split into non-empty edge-disjoint graphs $G_1$ and $G_2$ such that for all edges $u_1v_1$ of $G_1$ and $u_2v_2$ of $G_2$ we have $u_1<u_2<v_2<v_1$, i.e., the edges of $G_2$ are strictly covered by the edges of $G_1$. See Figure \ref{fig:sepnest} for examples.
\end{defi}

\begin{figure}[h]
	\centering
	\includegraphics[width=0.75\textwidth]{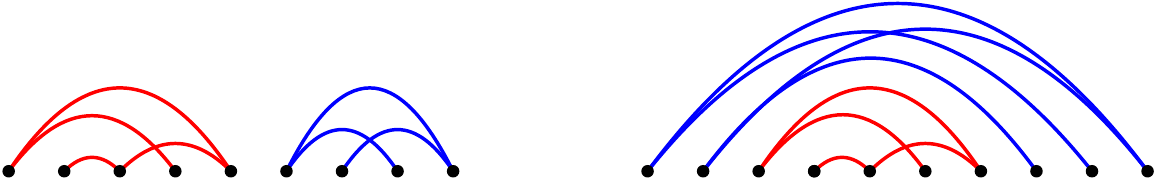}
	\caption{\label{fig:sepnest}A separable and a nested graph.}
\end{figure}

Note that in a nested graph $G$ the outer graph (i.e., $G_1$) must have interval chromatic number two.

We first show the dichotomy of the function $sat_{<}$:

\begin{thm}\label{thm:ordereddich}
	Given an ordered graph $G$, we either have $sat_{<}(n,G)=O(1)$ or $sat_{<}(n,G)=\Theta(n)$.
\end{thm}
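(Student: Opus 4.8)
The plan is to establish the upper bound $sat_<(n,G) = O(n)$ trivially (any saturated host graph that is not already $O(1)$ must be shown to have at most linearly many edges, which follows from the fact that, being $G$-free, $H$ has at most $ex_<(n,G) = O(n \cdot \mathrm{polylog}\, n)$... no — actually the clean statement is a \emph{lower} bound dichotomy, so the real content is: either $sat_<(n,G)$ is bounded, or it grows at least linearly; the linear \emph{upper} bound is not automatic and must be addressed too). So I would split the argument into two independent parts. First, I would show that for every $G$ there is a saturated host graph on $n$ vertices with $O(n)$ edges, so that $sat_<(n,G) = O(n)$ always. The standard trick here, adapting K\'aszonyi--Tuza and the $0$-$1$ matrix argument of Fulek--Keszegh, is to take a host graph that already contains a large "dense enough" gadget (e.g. many copies of $G$ minus an edge, or a complete-bipartite-like block on a bounded number of leftmost/rightmost vertices) together with a sparse structure on the remaining vertices guaranteeing that any added edge completes a copy of $G$; one then greedily deletes edges while maintaining saturation, and argues the result is $G$-free with $O(n)$ edges. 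This part is routine but needs care because of the ordered constraint.

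Second, and this is where the dichotomy has teeth: I would show that if $sat_<(n,G)$ is \emph{not} $O(1)$ — i.e. it is unbounded — then in fact $sat_<(n,G) = \Omega(n)$. The approach is contrapositive/structural: suppose along some infinite sequence of $n$ the minimal saturated host graphs $H_n$ have $o(n)$ edges. Then $H_n$ has $o(n)$ non-isolated vertices, so it contains arbitrarily long runs of consecutive isolated vertices. I want to derive that $G$ must have a very restricted form — essentially that $G$ has a minedge (an isolated edge on two consecutive vertices) — because only then can a host graph be "locally empty" over long intervals and still be saturated: adding an edge inside a long empty interval must create a copy of $G$, and if that copy uses only the two new endpoints as its "active" part near that location, $G$ essentially contains an isolated edge on consecutive vertices. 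Conversely, if $G$ does have a minedge, one exhibits an $O(1)$ construction: place all the "non-minedge part" of $G$ once near the left end, fill in enough surrounding structure so that any new edge either lands near that fixed gadget (completing $G$ directly) or lands in the empty middle and plays the role of the minedge while the rest of $G$ sits in the fixed gadget. So the dichotomy reduces to: $sat_<(n,G) = O(1)$ iff $G$ has a minedge (or some closely related local condition involving superedges), and otherwise $\Omega(n)$.

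The key lemma driving the lower bound is a "local density" statement: in any $G$-saturated ordered graph $H$ on $n$ vertices, the number of maximal runs of consecutive isolated vertices is $O(1)$, or each such run has bounded length, \emph{unless} $G$ admits the minedge configuration; hence the number of non-isolated vertices is $\Omega(n)$, giving $\Omega(n)$ edges. To prove this I would take a long empty run $[a,b]$ in $H$, add the edge $e$ joining its two endpoints $a,b$ (or two interior consecutive vertices), look at a newly created copy $G'$ of $G$ using $e$, and track which vertices of $G$ map into $[a,b]$: by emptiness they can only be the endpoints of $e$, forcing the corresponding edge of $G$ to have no vertex of $G$ strictly between its endpoints and (using a second added edge, or varying the position of $e$ along the run) forcing both endpoints of that edge of $G$ to have degree one — i.e. a minedge.

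The main obstacle I anticipate is the $O(1)$ construction in the easy direction and pinning down the exact local condition on $G$: the excerpt hints (via the definitions of \emph{minedge} and \emph{superedge}, and the remark that interval chromatic number $\le 2$ already forces $\Theta(n)$) that the bounded case is subtle and that having a minedge alone may not suffice — one likely needs $G$ to have a minedge \emph{and} enough "room" (a superedge or a separable/nested-type decomposition) so that a bounded gadget can host all of $G$ except the minedge while leaving a long empty interval available. Getting this characterization exactly right, and verifying that the proposed bounded host graph is genuinely $G$-free (the ordered constraint makes $G$-freeness of explicit constructions delicate), is the crux; the $\Omega(n)$ direction, once the local lemma is set up, should be comparatively mechanical.
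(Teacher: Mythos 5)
Your plan for the $\Omega(n)$ direction has a genuine gap: you propose to prove the dichotomy by \emph{characterizing} when $sat_<(n,G)$ is bounded in terms of local structure of $G$ (``$sat_<(n,G)=O(1)$ iff $G$ has a minedge, or some closely related local condition''). That characterization is, first, actually false as stated -- the paper exhibits many graphs with minedges that nonetheless have $\Theta(n)$ saturation (e.g.\ any $G$ of interval chromatic number $2$, and $\Gamma_{\{m,0,\dots,0\}}$) -- and, second, the correct characterization is precisely the hard open problem that the rest of the paper chips away at. A dichotomy proof must not presuppose it. The paper's argument sidesteps characterization entirely via a \emph{duplication} (compactness) argument: if for some $n_0$ a minimum saturated host $H_{n_0}$ contains two consecutive isolated vertices (or $|V(G)|$ of them if $G$ has isolated vertices), insert further isolated vertices between them; the resulting graph on any $n\ge n_0$ vertices still saturates $G$ with the same number of edges, so $sat_<(n,G)\le sat_<(n_0,G)=O(1)$. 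Otherwise no saturated host ever has two consecutive isolated vertices, so at least half the vertices are non-isolated and $sat_<(n,G)=\Omega(n)$. No property of $G$ beyond ``has/has not isolated vertices'' is needed.

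Your sketch for the $O(n)$ upper bound is also shakier than it needs to be: ``greedily delete edges while maintaining saturation'' is not a well-defined operation (saturation is not monotone under edge deletion), and you never nail down a concrete $G$-free construction. The paper instead picks an edge $uv$ of $G$ that is minimal under containment, counts $a$ vertices before $u$, $b$ between $u,v$, and $c$ after $v$, and takes the explicit host in which the first $a$ and last $c$ vertices are joined to everything and all edges of stretch at most $b$ are present; a short case check shows this avoids $G$ and is saturated. The paper does use a greedy step in the cyclically ordered dichotomy proof, but there it is greedy \emph{addition} of edges to a $G$-free scaffold with a degree cap, which is the valid direction of the trick. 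In short: your global outline (prove $O(n)$ always, and show not-$O(1)$ implies $\Omega(n)$) is the right frame, but both halves need to be replaced by the paper's concrete mechanisms, and the reliance on a minedge characterization must be dropped entirely.
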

\begin{proof}
	Let $H_n$ be the host graph saturating $G$ on $n$ vertices with $sat_{<}(n,G)$ edges. If $G$ has no isolated vertices then if there exists an $n_0$ such that $H_{n_0}$ contains two adjacent isolated vertices, then we can multiply these vertices to get host graphs of size $n>n_0$ with the same number of edges, showing that $sat_{<}(n,G)\le sat_{<}(n_0,G)=O(1)$ for $n\ge n_0$. If $G$ contains isolated vertices then instead of two we require $|V(G)|$ many consecutive isolated vertices and get to the same conclusion.
	
	Thus either $sat_{<}(n,G)=O(1)$ or there are no two (resp. $|V(G)|$ many) consecutive isolated vertices in the host graphs, which implies $sat_{<}(n,G)=\Omega(n)$.
	
	We are left to prove that $sat_{<}(n,G)=O(n)$ always holds. For that take an arbitrary edge $uv$, $u<v$, of $G$ s.t. there is no other edge $u'v'$ such that $u\le u'<v'\le v$.	Denote the number of vertices preceding $u$ by $a$, the number of vertices between $u$ and $v$ by $b$, and the number of vertices succeeding $v$ by $c$ (in particular $a+b+c+2=|V(G)|$).

	For $n>|V(G)|$ let $H_n$ be a graph on $n$ vertices such that its first $a$ and last $c$ vertices are connected with every vertex and we also add all the edges $ij$ with at most $b-1$ vertices between $i$ and $j$. $H_n$ has $O(n)$ edges. We claim that it is a host graph saturating $G$. 
	
	First, $H$ avoids $G$. Assume on the contrary. Take a copy of $G$ contained in $H$ and let $i$ and $j$ be the vertices playing the roles of $u$ and $v$ in this copy of $G$, respectively (the vertex set of $H$ is $[n]$). If $i\le a$ then we cannot find in $H$ the $a$ vertices preceding $u$ in $G$. If $j\ge n-c+1$ then we cannot find in $H$ the $c$ vertices succeeding $v$ in $G$. Finally, if $j-i<b$ then we cannot find in $H$ the $b$ vertices between $u$ and $v$ in $G$. All cases lead to a contradiction.
	
	Second, adding an arbitrary edge $ij$ to $H$ we have that $i>a$, $j<n-c+1$ and $j-i\le b$ and so we can take $i,j$, the first $a$ vertices, the last $c$ vertices and $b$ vertices between $i$ and $j$ in $H$ to find a copy of $G$ in $H+uv$.
\end{proof}	

Although we are mostly concerned about the order of magnitude of the saturation function, it is worth to note that actually when $sat_{<}(n,G)=O(1)$ then there exists a number $n_0$ such that $sat_{<}(n,G)=sat_{<}(n_0,G)$ for $n\ge n_0$. This follows from the fact that for big enough $n$ we necessarily have many consecutive isolated vertices in a host with $sat_{<}(n,G)$ edges and adding or removing an isolated vertex from these gives us another saturated host.\footnote{This reasoning was used already for graphs in \cite{kaszonyituza}.}

\begin{obs}\label{obs:single}
	If $G$ is a single edge then $sat_{<}(n,G)= 0$.
\end{obs}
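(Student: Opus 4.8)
The plan is to exhibit the edgeless graph as a saturating host. Let $H$ be the ordered graph on $n$ vertices with no edges at all. Since $G$ is a single edge, it contains an edge, while $H$ does not, so $H$ trivially avoids $G$ as an ordered subgraph. On the other hand, if we add an arbitrary edge $ij$ with $i<j$ to $H$, then $H+ij$ contains exactly one edge; as an ordered graph on its two endpoints this is order-isomorphic to $G$ (there is only one ordered graph on two vertices with one edge), so a copy of $G$ has been created. Hence $H$ is a host graph saturating $G$, and it has $0$ edges, giving $sat_{<}(n,G)\le 0$. Since the number of edges is nonnegative, $sat_{<}(n,G)=0$.

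There is essentially no obstacle here: the only point worth making explicit is that any single edge, once its two endpoints are placed on the line, is automatically order-isomorphic to $G$, so the freshly added edge is always a valid copy. Consequently the observation is immediate, and it serves mainly to pin down the base case distinguishing single-edge forbidden graphs (with $sat_{<}\equiv 0$) from all other forbidden graphs, where by Theorem~\ref{thm:ordereddich} the function is either a positive constant or $\Theta(n)$.
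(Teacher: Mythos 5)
Your proof is correct and uses exactly the approach the paper takes: the paper's one-line proof observes that the edgeless graph on $n$ vertices is a saturating host, and you simply spell out the two required properties (trivial avoidance, and any added edge being order-isomorphic to $G$). Nothing is missing or different in substance.
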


\begin{proof}
	The graph on $n$ vertices and no edges is a suitable host graph saturating $G$.
\end{proof}

\begin{claim}\label{claim:minedge}
	If $G$ contains no minedge then $sat_{<}(n,G)= \Theta(n)$.
\end{claim}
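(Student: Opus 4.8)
The plan is to combine the dichotomy from Theorem~\ref{thm:ordereddich} with a short structural argument. Since Theorem~\ref{thm:ordereddich} asserts that $sat_{<}(n,G)$ is either $O(1)$ or $\Theta(n)$ --- and its proof in fact establishes the upper bound $sat_{<}(n,G)=O(n)$ in all cases --- it suffices to rule out the bounded alternative, i.e.\ to prove $sat_{<}(n,G)=\Omega(n)$. Recall from the proof of the dichotomy that $sat_{<}(n,G)=O(1)$ would force the existence of a saturating host containing two adjacent isolated vertices (and indeed arbitrarily long blocks of them). So the core of the argument is to show that, when $G$ has no minedge, \emph{no} host graph $H$ saturating $G$ can contain two adjacent isolated vertices.

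To this end I would argue by contradiction: suppose $H$ saturates $G$ and the vertices $v$ and $v+1$ are both isolated in $H$. Since they are then non-adjacent, $e=v(v+1)$ is a legal edge to add, and by saturation $H+e$ contains a copy of $G$, say via an order-preserving embedding $\phi$ of $G$ into $H+e$. Because $H$ itself avoids $G$, this copy must use the new edge $e$, so $e=\phi(u')\phi(v')$ for some edge $u'v'\in E(G)$, and order-preservation forces $\phi(u')=v$ and $\phi(v')=v+1$. Now two observations finish it. First, any $G$-neighbor $w$ of $u'$ satisfies $\phi(u')\phi(w)\in E(H+e)$, i.e.\ $\phi(w)$ is a neighbor of $v$ in $H+e$; but $v$ was isolated in $H$, so its only neighbor in $H+e$ is $v+1=\phi(v')$, whence $w=v'$ and $\deg_G(u')=1$, and symmetrically $\deg_G(v')=1$. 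Second, any vertex of $G$ strictly between $u'$ and $v'$ would be mapped strictly between $v$ and $v+1$, which is impossible, so there is no such vertex. Thus $u'v'$ is an isolated edge of $G$ joining two neighboring vertices --- a minedge --- contradicting the hypothesis on $G$.

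Having shown that a saturating host $H$ has no two adjacent isolated vertices, the linear lower bound follows routinely: the isolated vertices of $H$ form an independent set in the path on vertices $1,2,\dots,n$, hence number at most $\lceil n/2\rceil$, so at least $\lfloor n/2\rfloor$ vertices of $H$ are non-isolated, and each contributes at least $1$ to $\sum_v\deg_H(v)=2|E(H)|$; therefore $sat_{<}(n,G)=|E(H)|=\Omega(n)$, and together with Theorem~\ref{thm:ordereddich} we conclude $sat_{<}(n,G)=\Theta(n)$. I do not anticipate a genuine obstacle: the one point that needs a little care is the assertion that the new copy of $G$ in $H+e$ is forced to use the added edge (this is exactly where ``$H$ avoids $G$'' enters), and it is worth noting that the argument needs no separate treatment of possible isolated vertices of $G$ --- they can be placed arbitrarily by $\phi$ and never interfere.
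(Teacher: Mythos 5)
Your proof is correct and follows the same approach as the paper: show that a saturating host cannot have two adjacent isolated vertices (because the added edge between them would be forced to play the role of a minedge in the new copy of $G$), deduce $\Omega(n)$ edges, and combine with Theorem~\ref{thm:ordereddich}. You simply spell out in more detail why the added edge must be a minedge (degree‑one endpoints and no vertex strictly between), which the paper leaves implicit.
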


\begin{proof}
	In a host graph $H$ on $n$ vertices saturating $G$ we cannot have two adjacent isolated vertices, as otherwise we could connect these edges without introducing a copy of $G$ (as this edge must play the role of a minedge, but $G$ has no minedge). This implies that  $sat_{<}(n,G)= \Omega(n)$ which together with Theorem \ref{thm:ordereddich} finishes the proof.
\end{proof}

The next theorem is somewhat similar to the result from \cite{01sat} mentioned in the introduction saying that decomposable $0$-$1$ matrices have a linear saturation function, compare Figure \ref{fig:sat01example} and Figure \ref{fig:sepnest}.

\begin{thm}\label{thm:sep}
	If the ordered graph $G$ is separable or nested then $sat_{<}(n,G)=\Theta(n)$.
\end{thm}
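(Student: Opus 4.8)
The plan is to derive the claim from the dichotomy. Theorem~\ref{thm:ordereddich} already gives $sat_{<}(n,G)=O(n)$ together with the alternative $sat_{<}(n,G)\in\{O(1),\Theta(n)\}$, so it suffices to show $sat_{<}(n,G)=\Omega(n)$, that is, to rule out that $sat_{<}(n,G)$ is bounded. Suppose it were: then for every large $n$ there is a host $H=H_n$ on $n$ vertices that saturates $G$ and has only $O(1)$ edges, hence only $O(1)$ non-isolated vertices; in particular $H$ contains an interval $I=\{a+1,\dots,a+L\}$ of consecutive isolated vertices with $L$ as large as we please, and the few non-isolated vertices cut the rest of the vertex set into $O(1)$ blocks. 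I would add a single edge $e$ with both endpoints in $I$ and argue that $H+e$ still avoids $G$, contradicting that $H$ is saturated.

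The mechanism is the following observation. Suppose $\phi$ embeds $G$ into $H+e$. As $H$ avoids $G$, the new edge must be used, say $e=\phi(u^\ast)\phi(v^\ast)$ for an edge $u^\ast v^\ast$ of $G$. Every edge of $\phi(G)$ other than $e$ is an edge of $H$, hence has both endpoints outside the isolated interval $I$; therefore any vertex of $G$ that $\phi$ sends into $I$ is isolated in $G$ except possibly for being an endpoint of $u^\ast v^\ast$. Taking $e$ to span (almost) all of $I$, this forces: (i) $u^\ast v^\ast$ is the only edge of $G$ meeting the closed interval $[u^\ast,v^\ast]$; and (ii) $\phi$ maps $\{1,\dots,u^\ast-1\}$ into the part of $H$ left of $I$ and $\{v^\ast+1,\dots,n\}$ into the part right of $I$. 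So, inside $H$, the ordered subgraph of $G$ induced on $\{1,\dots,u^\ast-1\}$ appears to the left of $I$, the one induced on $\{v^\ast+1,\dots,n\}$ appears to the right of $I$, and the single edge $u^\ast v^\ast$ is realised inside $I$.

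I would now use the decomposition to turn this into a copy of $G$ that is already present in $H$, which is the contradiction. In the separable case fix a split $G=G_1\sqcup G_2$ in which every edge of $G_1$ lies entirely to the left of every edge of $G_2$; then every edge of $G$ lies in $G_1$ or in $G_2$, so $u^\ast v^\ast$ does, and by a left--right reflection (which preserves separability and interchanges $G_1,G_2$) I may assume $u^\ast v^\ast\in G_1$. Then all of $G_2$ lies to the right of $v^\ast$, so a copy of $G_2$ appears in $H$ to the right of $I$; and since no edge of $G_1$ other than $u^\ast v^\ast$ meets $[u^\ast,v^\ast]$, the rest of $G_1$ appears split on the two sides of $I$. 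Since $H$ has only $O(1)$ edges it has further long isolated intervals, and one of them, together with a genuine edge of $H$, can be used to realise the missing edge $u^\ast v^\ast$ and to pull all of $G_1$ to the left of the copy of $G_2$; because the only structure of $G$ between $G_1$ and $G_2$ is a (possibly empty) block of isolated vertices, the long gap so created leaves plenty of room and the pieces assemble into a copy of $G$ inside $H$. (Equivalently: choosing the two endpoints of $e$ to be isolated vertices close to the two ends of $H$ pins $u^\ast$ down to the left block of $G$ and $v^\ast$ to the right block; but in a separable graph no edge joins these two blocks, a contradiction.) The nested case, with $G=G_1\sqcup G_2$ where every edge of $G_2$ is strictly covered by an edge of $G_1$ and $G_1$ has interval chromatic number two, is handled the same way: if $u^\ast v^\ast\in G_2$ it is strictly covered by a $G_1$-edge, which must then be realised by an actual edge of $H$ spanning $I$; if $u^\ast v^\ast\in G_1$ then $u^\ast$ and $v^\ast$ fall in different blocks of $G_1$, and the nonempty, strictly covered $G_2$ lies inside $(u^\ast,v^\ast)$ or to one side of it, and gets relocated to build a copy of $G$ in $H$.

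Everything above except the final assembly is routine order-bookkeeping; the delicate point — and the one I expect to be the real obstacle — is the assembly, i.e.\ choosing $e$ (equivalently, the long isolated intervals of $H$ and the auxiliary edge of $H$) precisely enough that the pieces of $G$ that $\phi$ distributes on the two sides of $I$, the auxiliary edge in place of $e$, and the few edges of $G$ that may straddle $[u^\ast,v^\ast]$ genuinely recombine into a copy of $G$ in $H$. This requires keeping careful track of where the $O(1)$ edges of $H$ sit relative to the isolated intervals.
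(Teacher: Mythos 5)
Your plan is genuinely different from the paper's, and the gap you flag yourself is real. The paper does not add an edge inside an isolated interval. For the separable case it connects an isolated vertex $x$ to the \emph{first} vertex of $H$: then the new edge's left endpoint is the leftmost vertex of $H$, which forces the new edge to play the role of a $G_1$-edge (there is no room to its left for the nonempty $G_1$), and hence exhibits a copy of $G_2$ in $H$ to the right of $x$; symmetrically, connecting $x$ to the last vertex exhibits a copy of $G_1$ in $H$ to the left of $x$, and these two disjoint pieces are automatically in the correct order, so they assemble into a copy of $G$ in $H$ with no further work. For the nested case the paper connects an isolated vertex $w$ to its neighbour and to increasingly distant vertices on the left, locating the transition point at which the added edge changes from being a $G_1$-edge to a $G_2$-edge in the induced copy; again the two copies ($G_2$ near $w$, $G_1$ straddling $w$) are already positioned so as to combine directly. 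In both cases the ``assembly'' is trivial because the two added edges are chosen at the extremes, so the two resulting fragments of $G$ live on disjoint sides of $x$ (resp.\ around $w$) and no re-embedding or auxiliary edge is needed.

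Your version, by contrast, adds a single edge $e$ spanning a long isolated interval $I$ and then tries to reassemble the pieces of $\phi(G)$ that land on the two sides of $I$. This is where the argument does not close. After $\phi$ is chosen, the left part of $G_1$ sits to the left of $I$, the right part of $G_1$ and all of $G_2$ sit to the right of $I$, and the only thing between them is $I$ itself, which contains no edge of $H$. So there is no edge of $H$ available ``between the two fragments'' to play the role of $u^\ast v^\ast$, and relocating the fragments to a different isolated interval is not free: the fragments are tied to specific vertices of $H$ and you would need a fresh embedding argument to move them, which is exactly what you have not supplied. (Your parenthetical variant --- take $e$ with one endpoint near each end of $H$ --- is closer in spirit to the paper, but as stated it only forces $u^\ast$ to lie among the first $O(1)$ vertices of $G$ and $v^\ast$ among the last $O(1)$, which does not by itself contradict separability unless you also control the number of non-isolated vertices of $H$ preceding the first and following the last isolated vertex.) A second, smaller issue: in the nested case an edge of $G_1$ can strictly cover $u^\ast v^\ast$ and be realised by an edge of $H$ spanning $I$; your observation (i) is compatible with this, but the sketch then asserts without justification that this covering edge ``gets relocated.'' So the structure of the intended proof is sound in outline but the crucial recombination step --- precisely the one you identify as the obstacle --- is missing, and I do not see an easy way to patch it without effectively switching to the paper's strategy of adding edges at the two extremes and combining the resulting disjoint copies of $G_1$ and $G_2$.
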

\begin{proof}
	We start with the case when $G$ is separable.
	Let $G_1$ and $G_2$ be the graphs that show that $G$ is separable.
	Let $H$ be a host graph on $n$ vertices saturating $G$ that has $sat_{<}(n,G)$ edges. If $H$ contains no isolated vertex then it has $\Omega (n)$ edges, as required. Otherwise, take an isolated vertex $x$ of $H$ and connect it to the first vertex of $H$. This must introduce a copy of $G$ in which this edge must play the role of some edge of $G_1$. This implies that there is a copy of $G_2$ in $H$ to the right from $x$. Similarly, connecting $x$ to the last vertex shows that there is a copy of $G_1$ in $H$ to the left from $x$. These two together form a copy of $G$, a contradiction. Thus, $H$ has $\Omega (n)$ edges which together with Theorem \ref{thm:ordereddich} finishes the proof.

	We continue with the case when $G$ is nested.
	Let $G_1$ and $G_2$ be the graphs that show that $G$ is nested. Notice that $G_1$ is a bipartite graph with parts $A$ and $B$ such that the vertices in $A$ precede the vertices in $B$ (i.e., $G$ has interval chromatic number two).
	Let $H$ be a host graph on $n$ vertices saturating $G$ that has $sat_{<}(n,G)$ edges. Adding an edge to $H$ introduces a copy of $G$. If $H$ has no two adjacent isolated vertices then $sat_{<}(n,G)=\Omega(n)$ which together with Theorem \ref{thm:ordereddich} finishes the proof. Otherwise, let $w'<w$ be two consecutive isolated vertices in $H$. Adding an edge to $H$ introduces a copy of $G$. If we add an edge to $H$ between $w'$ and $w$, it must play the role of an edge from $G_2$. If we add an edge to $H$ between the first vertex and $w$, it must play the role of an edge from $G_1$. This implies (by going from the leftmost vertex one by one towards $w$) that  there are two adjacent vertices $u,v$ with $u<v<w$, such that adding $uw$ we get a copy of $G$ in which $uw$ plays the role of an edge from $G_1$, while adding $vw$ we get a copy of $G$ in which $vw$ plays the role of an edge from $G_2$. This implies that in $H$ there is a copy of $G_2$ between $u$ and $w$, and a copy of $G_1$ such that the vertices corresponding to $A$ precede $v$ and the vertices corresponding to $B$ succeed $w$ in $H$. These together form a copy of $G$, a contradiction. 
\end{proof}	

\begin{cor}\label{cor:coverminedge}
	If $G$ has a minedge (strictly) covered by every other edge of $G$, then $sat_{<}(n,G)=\Theta(n)$.
\end{cor}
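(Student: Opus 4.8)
The plan is to observe that the hypothesis is essentially a restatement of $G$ being nested, and then to quote Theorem~\ref{thm:sep}. Let $e=uv$ be a minedge of $G$ that is covered by every other edge of $G$; in particular $v=u+1$ and both $u$ and $v$ have degree one in $G$. If $e$ is the only edge of $G$, then $G$ is a single edge and $sat_{<}(n,G)=0$ by Observation~\ref{obs:single} (the covering condition is then vacuous), so we may assume from now on that $G$ has at least one further edge.

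Set $G_2$ to be the graph consisting of the edge $e$ and let $G_1:=G-e$; these are non-empty and edge-disjoint. By hypothesis every edge $u_1v_1$ of $G_1$ covers $e$, i.e., $u_1\le u<v\le v_1$. Since $u$ and $v$ have degree one in $G$, no edge of $G_1$ is incident to $u$ or to $v$, so in fact $u_1<u<v<v_1$ for every edge $u_1v_1$ of $G_1$. This is precisely the defining condition for $G$ to be nested, with $G_1$ the outer graph and $G_2$ the inner graph. Hence $G$ is nested, and Theorem~\ref{thm:sep} gives $sat_{<}(n,G)=\Theta(n)$.

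I do not expect a genuine obstacle here: the statement is a direct corollary of Theorem~\ref{thm:sep}. The only two points that need a moment's care are the trivial case $|E(G)|=1$ handled above, and the observation that for a minedge ``covered'' automatically upgrades to ``strictly covered'' (because its endpoints have degree one), which is what lets the same short argument handle both readings of the parenthetical in the statement.
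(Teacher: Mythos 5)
Your proof is correct and takes the same route the paper intends: the hypothesis says exactly that $G$ is nested with the minedge as inner graph $G_2$ and the remaining edges as outer graph $G_1$, and Theorem~\ref{thm:sep} finishes the argument; your observation that covering upgrades to strict covering because the minedge's endpoints have degree one is precisely the paper's remark following the corollary. One small clarification about your treatment of the case $|E(G)|=1$: if the hypothesis is read vacuously, the corollary's claimed conclusion $\Theta(n)$ is actually false for a single edge (Observation~\ref{obs:single} gives $0$), so the honest framing is not "we may reduce to the case of at least two edges" but rather that the phrase "covered by every other edge" presupposes the existence of other edges and the single-edge case is excluded by hypothesis; this also matches the definition of nested, which requires both $G_1$ and $G_2$ to be non-empty.
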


Note that covering and strictly covering a minedge is equivalent as there are no other edges incident to the endvertices of the minedge by definition.

\begin{cor}\label{cor:intchrom}
	If the interval-chromatic number of $G$ is two, then $sat_{<}(n,G)=\Theta(n)$.
\end{cor}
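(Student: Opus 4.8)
The plan is to reduce the statement to the two earlier results Claim~\ref{claim:minedge} and Corollary~\ref{cor:coverminedge} by a case split on whether $G$ contains a minedge; essentially all the content lies in locating where a minedge can sit in a graph of interval chromatic number two.

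First I would dispatch the easy branch: if $G$ has no minedge, then Claim~\ref{claim:minedge} already gives $sat_{<}(n,G)=\Theta(n)$ and there is nothing more to do. So suppose $G$ has a minedge $uv$, i.e.\ $v=u+1$ and both $u$ and $v$ have degree one. Fix a partition of $V(G)$ into two intervals $A$ (the first $a$ vertices) and $B$ (the last $b$ vertices) witnessing that the interval chromatic number is two, so that every edge of $G$ runs between $A$ and $B$. The key observation is that $u$ and $v=u+1$, being adjacent, consecutive, and with $u<v$, must lie in different parts of this partition; since the parts are an initial and a final segment, this forces the partition to split exactly between $u$ and $v$, that is, $u$ is the last vertex of $A$ and $v$ the first vertex of $B$.

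Given this, the remainder is automatic: any edge $u'v'\ne uv$ of $G$ is incident to neither $u$ nor $v$ (both of which have degree one), so $u'\in\{1,\dots,a-1\}$ and $v'\in\{a+2,\dots,a+b\}$, whence $u'<u<v<v'$ and $u'v'$ strictly covers $uv$. Thus $uv$ is a minedge covered by every other edge of $G$, and Corollary~\ref{cor:coverminedge} — equivalently, Theorem~\ref{thm:sep} applied to the nested splitting with outer graph $G_1=G-uv$ and inner graph $G_2=\{uv\}$ — completes the argument. (As in those statements, the case where $G$ is a single edge is the degenerate exception, with $sat_{<}(n,G)=0$ by Observation~\ref{obs:single}; the corollary should be read as concerning graphs with at least two edges.) I do not anticipate any genuine obstacle: once the minedge is pinned to the boundary between the two intervals, both branches of the case split follow immediately from results already proved.
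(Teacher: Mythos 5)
Your proof is correct and follows essentially the same two-case structure as the paper's: dispatch the no-minedge case via Claim~\ref{claim:minedge}, and otherwise argue that the minedge is strictly covered by every other edge and invoke Corollary~\ref{cor:coverminedge}. You have usefully spelled out the step the paper leaves implicit — that a minedge's endpoints must straddle the interval partition, forcing every other edge to strictly cover it — and you correctly flag the degenerate single-edge case, which the paper silently excludes.
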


\begin{proof}
	If $G$ does not contain a minedge then we are done by Claim \ref{claim:minedge}. Otherwise $G$ contains a minedge. Then this minedge must be strictly covered by every other edge and so by Corollary \ref{cor:coverminedge} we are done.
\end{proof}

As mentioned in the introduction, Corollary \ref{cor:intchrom} implies that the graphs that correspond to permutation matrices all have linear saturation function in the ordered case.

\begin{claim}\label{claim:firstneighbors}
	If every neighbor of the first vertex (or the last vertex) in $G$ has degree greater than one then $sat_{<}(n,G)=\Theta(n)$. \footnote{We note that Theorem \ref{thm:semiord} implies that such a $G$ has $ssat_{<}(n,G)=\Theta(n)$, from which Claim \ref{claim:firstneighbors} also follows.}
\end{claim}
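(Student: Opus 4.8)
The strategy is the same as in Claim \ref{claim:minedge} and Theorem \ref{thm:sep}: I want to show that a host graph $H$ on $n$ vertices saturating $G$ cannot contain two adjacent isolated vertices, since then $sat_<(n,G)=\Omega(n)$ and Theorem \ref{thm:ordereddich} upgrades this to $\Theta(n)$. So suppose for contradiction that $H$ saturates $G$ and that $w-1,w$ are two adjacent isolated vertices of $H$. The hypothesis is that every neighbor of the first vertex $f$ of $G$ has degree greater than one (the ``last vertex'' case is symmetric). Note this forces $f$ itself to have degree at least one — otherwise $f$ is isolated and $G$ is covered, in a sense, by a simpler situation; more precisely, if $f$ had degree zero we could still run the argument, but the interesting case is when $f$ has a genuine neighbor. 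Actually the cleanest thing is to first dispose of the case where $G$ has a minedge: if $G$ has a minedge then adding the edge $(w-1)w$ to $H$ introduces a copy of $G$, and this new edge must play the role of the minedge; but the minedge is not incident to $f$ (its endpoints have degree one, and if $f$ were one of them then $f$ has a neighbor of degree one, contradicting the hypothesis unless $f$ is itself that minedge's endpoint — one has to check that $|V(G)|\ge 3$ or handle $G$ a single edge via Observation \ref{obs:single}). So the copy of $G$ minus the minedge already sits in $H$, and in particular a copy of the vertex $f$ together with its incident edges sits in $H$; this will be the pivot.

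The heart of the argument is then the ``sliding'' trick already used in Theorem \ref{thm:sep} for nested graphs. Consider adding to $H$ an edge from some vertex $x$ to the isolated vertex $w$, for $x$ ranging over the vertices to the left of $w-1$. Each such addition creates a copy of $G$; I want to find an $x$ for which the added edge $xw$ plays the role of an edge $fg$ incident to the first vertex $f$ of $G$, with $x$ in the role of $f$. To get started, add the edge $(w-1)w$: since $w-1,w$ are isolated and $G$ has no minedge available to place here other than — wait, I already handled the minedge case, so now assume $G$ has no minedge among the edges we'd need; then the edge $(w-1)w$ must play the role of a non-minedge, and since $w-1$ and $w$ have no other incident edges in $H+(w-1)w$, the edge $(w-1)w$ must be an isolated edge of $G$, i.e.\ a minedge — contradiction. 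Hence actually if $G$ has no minedge we already contradict saturation directly (this is just Claim \ref{claim:minedge} again), so the real content is: $G$ has exactly the minedges it has, none incident to $f$, and we must derive a contradiction from the existence of $w-1,w$ isolated.

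Here is the corrected pivot. Add the edge $f'w$ where $f'$ is the leftmost vertex of $H$; this creates a copy of $G$, and since $f'$ is leftmost, the edge $f'w$ must play the role of an edge incident to the first vertex $f$ of $G$ — so $f'$ plays $f$ and $w$ plays some neighbor $g$ of $f$. Since $g$ has degree $>1$ in $G$, $w$ must be incident to another edge in this copy; but $w$ is isolated in $H$ and we only added $f'w$, so the other edge at $w$ would have to be an edge of $H$ incident to $w$ — there are none. This is the contradiction, provided $f'\neq w$, i.e.\ provided $w$ is not the leftmost vertex, which we may assume since there are two isolated vertices $w-1<w$, so $w\ge 2$ and $f'\le w-1<w$. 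The main obstacle, and the step requiring care, is the interaction with minedges and small cases: one must separately handle $G$ being a single edge (Observation \ref{obs:single}), $G$ having a minedge (where the added edge $f'w$ might instead play the role of that minedge rather than an edge at $f$ — but a minedge is isolated, so $f'$ would be isolated in the copy, yet $f'$ is adjacent to $w$; this still forces a contradiction unless the minedge is exactly the image, and then we argue via Corollary \ref{cor:coverminedge}-style reasoning or note that $f$ having a neighbor means $G\ne$ minedge alone). Once these degenerate configurations are cleared, the leftmost-vertex pivot gives the contradiction cleanly, and symmetry handles the ``last vertex'' hypothesis.
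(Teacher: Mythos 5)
Your ``corrected pivot'' is exactly the paper's proof: connect the leftmost vertex $f'$ of $H$ to an isolated vertex $w$, note that $f'$ (being leftmost) must play the role of the first vertex $f$ of $G$ in the new copy, so $w$ plays the role of a neighbor of $f$, which by hypothesis has degree $>1$ --- impossible since $w$ has degree exactly $1$ in $H+f'w$. Everything else in your proposal is a detour: the minedge preamble, the abandoned ``sliding'' setup, and the closing worry that $f'w$ might play the role of a minedge are all unnecessary, because the pivot itself already excludes that possibility --- $f'$ must map to $f$, and $f$ cannot be an endpoint of a minedge (its neighbors all have degree $>1$ by hypothesis), while if $f$ were isolated in $G$ then $f'w$ could not be mapped to any edge of $G$ at all. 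Also, you do not need two adjacent isolated vertices; a single isolated vertex other than the leftmost vertex of $H$ suffices, which is how the paper phrases it.
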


\begin{proof}
	Let $H$ be a host graph saturating $G$ on $n$ vertices that has $sat_{<}(n,G)$ edges. If $H$ has no isolated vertices then it has $\Theta(n)$ vertices, as required. Otherwise, take the first vertex, $v$, of $H$. Connecting $v$ to an isolated vertex $u$ creates a copy of $G$ which uses this new edge. Thus, $v$ must be the first vertex in this copy of $G$ and $u$ must be the neighbor of the first vertex. However, the degree of $u$ is one in $H$ and so at most one in this copy of $G$, contradicting the assumption of the lemma.
\end{proof}

\begin{lem}\label{lem:minsupdeg}
    Let $M\subseteq E(G)$ be the set of all minedges of $G$, let $S\subseteq E(G)$ be the set of all superedges of $G$ and let $L=\{uv \in E(G): deg(u)= 1\}$ \footnote{Notice that we can replace $L$ by $R=\{uv \in E(G): deg(v)= 1\}$ and obtain the same result for $R$.}. If $L\subseteq S \cup M$ then $sat_{<}(n,G)=\Theta(n)$.
\end{lem}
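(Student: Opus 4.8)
The upper bound $sat_{<}(n,G)=O(n)$ is Theorem~\ref{thm:ordereddich}, so by the dichotomy proved there it suffices to rule out $sat_{<}(n,G)=O(1)$. Two cases can be disposed of at once: if $|E(G)|=1$ then $sat_{<}(n,G)=0$ by Observation~\ref{obs:single} (so we assume $|E(G)|\ge 2$ in the statement), and if $M=\emptyset$, i.e.\ $G$ has no minedge, then $sat_{<}(n,G)=\Theta(n)$ already by Claim~\ref{claim:minedge}. Hence we may assume $M\ne\emptyset$; since every minedge $uv$ has $\deg(u)=1$ we have $M\subseteq L$, so also $L\ne\emptyset$, and $L\subseteq S\cup M$ by hypothesis. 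We also assume from now on that $G$ has no isolated vertex --- the case with isolated vertices is treated the same way, using runs of $|V(G)|$ consecutive isolated vertices in place of adjacent pairs. Suppose, for contradiction, that $sat_{<}(n,G)\le c$ for all $n$; then for $n$ large a saturating host $H$ on $n$ vertices with at most $c$ edges has at least $n-2c$ isolated vertices, hence a run $W=\{w_1<w_2<\dots<w_N\}$ of consecutive isolated vertices with $N$ as large as we wish.

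The first point is that, for all $1\le i<j\le N$, adding the edge $w_iw_j$ to $H$ creates a copy of $G$ in which $w_iw_j$ plays a \emph{minedge}. Indeed, $H$ avoids $G$ but adding $w_iw_j$ creates a copy; in it the endpoints $w_i,w_j$ have degree one (they were isolated), so $w_iw_j$ plays an edge $e\in L\subseteq S\cup M$, and since $w_{i+1},\dots,w_{j-1}$ are isolated, hence unused by the copy, no vertex of the copy lies strictly between $w_i$ and $w_j$; as a superedge has a vertex of $G$ strictly between its endpoints, $e$ cannot lie in $S$, so $e$ is a minedge $ab$ with $w_i\leftrightarrow a$, $w_j\leftrightarrow b$. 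Consequently the vertices of $G$ before $a$ are mapped into $\{1,\dots,w_i-1\}$, those after $b$ into $\{w_j+1,\dots,n\}$, and deleting $w_iw_j$ from the copy exhibits a copy $D$ of $G^{-}:=G-\{a,b\}$ inside $H$ whose initial part (the image of the vertices of $G$ before $a$) lies left of $w_i$ and whose final part lies right of $w_j$. Since $G$ has finitely many minedges, by pigeonhole over the additions $w_1w_2,w_2w_3,\dots,w_{N-1}w_N$ a fixed minedge $ab$ --- hence a fixed graph $G^{-}$ --- is produced by many of them; and because deleting the degree-one endpoints $a,b$ creates no new isolated vertex, $G^{-}$ has no isolated vertex, so the corresponding copies $D$ use only non-isolated vertices of $H$. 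Their initial parts therefore all lie in $\{1,\dots,w_1-1\}$ and their final parts all lie in $\{w_N+1,\dots,n\}$: $H$ contains a copy of $G^{-}$ whose two parts are separated by the whole block $W$.

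It remains to turn this into a contradiction with $H$ avoiding $G$, which is the heart of the argument. The plan is to make a second edge addition, again incident to the block $W$ but now reaching far toward one end of $H$, and to use $L\subseteq S\cup M$ once more to force this new edge into a prescribed role; this reveals a further sub-copy of $G$ sitting on the appropriate side of the block, and gluing it to the copy $D$ of $G^{-}$ along compatible disjoint intervals of $H$ --- exactly in the spirit of the nested case of Theorem~\ref{thm:sep}, with the superedges of $G$ playing the role of the ``outer'' graph $G_1$ and the minedge $ab$ the role of the ``inner'' graph $G_2$ --- produces a copy of $G$ already present in $H$, the required contradiction; combined with the dichotomy, this gives $sat_{<}(n,G)=\Theta(n)$. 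The main obstacle, as I see it, is precisely this last step: one must orchestrate the second addition (and, if necessary, choose which minedge $ab$ of $G$ to track --- typically an extreme one, so that one of the two parts of $G^{-}$ is small or degenerate) so that the newly revealed piece and $D$ occupy compatible, disjoint windows of $H$ and genuinely assemble into $G$. It is here that the interplay between superedges and minedges has to be exploited in full, and this is the technically heaviest part of the proof.
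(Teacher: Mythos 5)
Your setup is sound as far as it goes, but the heart of the argument is missing, and you acknowledge this yourself. Everything you prove --- that adding an edge between two isolated vertices $w_i,w_j$ inside a block $W$ must play a minedge, that deleting its endpoints from that copy leaves a copy $D$ of $G^-=G-\{a,b\}$ in $H$ straddling $W$ --- is correct, but it is a dead end on its own: between the two parts of $D$ there are only the isolated vertices of $W$, so there is no edge of $H$ available to stand in for the missing minedge, and no amount of pigeonholing over different pairs $w_iw_j$ changes that. The copy $D$ you produce is precisely the part of the structure that \emph{cannot} be completed inside $H$, and your ``plan'' for a second addition is only a sketch of an intention.

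The paper's proof supplies exactly the missing mechanism, and it is worth seeing why it works where your route stalls. First it reduces to $G$ not separable (separable is already handled by Theorem~\ref{thm:sep}); this is not cosmetic --- non-separability is what guarantees that every minedge is strictly covered by some superedge, which in turn guarantees that connecting an isolated vertex $u$ to the \emph{last} vertex of $H$ cannot play a minedge (there is no edge of $H$ strictly covering it). Then one scans: connect $u$ to $u+1,u+2,\dots$ and take the first $w$ for which $uw$ is \emph{not} a minedge in the created copy. Since $\deg(u)=1$ forces the image edge into $L\subseteq S\cup M$ and it is not a minedge, it is a superedge, so there is an edge $xy\in E(H)$ with $u<x<y<w$. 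But adding $uw'$ with $w'=w-1$ still plays a minedge by the choice of $w$, hence all other vertices of that copy lie outside $[u,w']$; since $y\le w'$, the edge $xy$ of $H$ sits in the empty gap of that copy and can replace $uw'$, exhibiting a copy of $G$ entirely inside $H$ --- the contradiction. The crucial ingredient you are missing is this ``first vertex where the role flips from minedge to non-minedge'' argument, which manufactures an actual edge of $H$ strictly inside the gap of a minedge-type copy. Your proposal never produces such an edge of $H$, because you only ever add edges with both endpoints in $W$, and $W$ is entirely isolated. You also omit the separability reduction, which is what makes the scan terminate in a superedge rather than wandering off. So the proposal, while pointed in the right general direction, has a genuine gap at exactly the step that carries the proof.
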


\begin{proof}
    If $G$ is separable then we are done by Claim \ref{thm:sep}. So we can assume that $G$ is not separable which implies that every minedge of $G$ is strictly covered by some edge (which is thus a superedge).
    
    Let $H$ be a host graph saturating $G$ on $n$ vertices that has $sat_{<}(n,G)$ edges. If there are no two adjacent isolated vertices in $H$ then it has $\Theta(n)$ edges, we are done. Otherwise, take two adjacent isolated vertices $u<v$ of $H$. Connecting them creates a copy of $G$, clearly $uv$ has to be a minedge in this copy of $G$. Connecting $u$ to the last vertex also creates a copy of $G$, in which copy this new edge cannot be a minedge (as it cannot be strictly covered by some other edge). Connecting $u$ one-by-one to the vertices that are bigger than $u$, there is a first (i.e., leftmost) vertex $w$ after $u$ such that adding $uw$ to $H$ creates a copy of $G$ in which $uw$ is not a minedge. Since the degree of $u$ must be $1$ in this copy of $G$, $uw$ corresponds to an edge of $G$ which is in $L$. As it is not a minedge of $G$, it has to be a superedge by the assumption of the claim. This implies that there is an edge $xy$ strictly covered by it, i.e., $u<x<y<w$. Let $w'$ be the vertex preceding $w$. Take the copy of $G$ that is created when we add the edge $uw'$ to $H$. In this copy $uw'$ must play the role of a minedge, thus by replacing $uw'$ with $xy$ we get a copy of $G$ that is already in $H$, a contradiction.
\end{proof}

\begin{thm}\label{thm:minsup}
    If every edge of $G$ is a minedge or a superedge then $sat_{<}(n,G)=\Theta(n)$.
\end{thm}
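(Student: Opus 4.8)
The plan is to obtain Theorem~\ref{thm:minsup} as an immediate consequence of Lemma~\ref{lem:minsupdeg}. In the notation of that lemma, let $M$ be the set of minedges of $G$, $S$ the set of superedges, and $L=\{uv\in E(G):\deg(u)=1\}$. Trivially $L\subseteq E(G)$, so the hypothesis of the theorem, namely $E(G)\subseteq M\cup S$, gives $L\subseteq S\cup M$, which is exactly the hypothesis of Lemma~\ref{lem:minsupdeg}. Hence $sat_{<}(n,G)=\Theta(n)$ and there is nothing more to do: the upper bound $sat_{<}(n,G)=O(n)$ is Theorem~\ref{thm:ordereddich}, and the matching lower bound $sat_{<}(n,G)=\Omega(n)$ is the content of the lemma.

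For robustness I would also be ready to give the lower bound directly, essentially re-running the proof of Lemma~\ref{lem:minsupdeg} under the (formally stronger) hypothesis. First reduce to the case that $G$ is not separable — otherwise Theorem~\ref{thm:sep} applies — and has at least two edges. Non-separability together with having at least two edges forces every minedge of $G$ to be strictly covered by some edge, hence by a superedge: if a minedge $uv$ were covered by no edge, then, since $u,v$ have degree one and nothing lies between them, every other edge would lie entirely to the left of $u$ or entirely to the right of $v$, and splitting $E(G)$ accordingly would witness separability. Now take a minimal saturated host $H$ on $n$ vertices: if it has no two consecutive isolated vertices it already has $\Omega(n)$ edges; otherwise pick two consecutive isolated vertices $u<v$ (with $v$ not the last vertex of $H$), note that adding $uv$ forces $uv$ to play a minedge of $G$, then slide $u$ rightwards to the leftmost $w$ for which adding $uw$ plays a non-minedge of $L$ — necessarily a superedge strictly covering some edge $x_Hy_H$ of $H$ with $u<x_H<y_H<w$ — and finally, in the copy of $G$ created by adding $uw'$ with $w'=w-1$, where $uw'$ plays a minedge, re-route that minedge from $uw'$ onto $x_Hy_H$ to obtain a copy of $G$ already present in $H$, a contradiction.

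I do not expect a real obstacle here, since all the weight sits in Lemma~\ref{lem:minsupdeg}; the remaining points are bookkeeping. One is the degenerate case where $G$ is a single edge on two adjacent vertices: that edge is a minedge, so the theorem's hypothesis holds, yet $sat_{<}(n,G)=0$ by Observation~\ref{obs:single}; thus the statement is to be read for $G$ with at least two edges. The other, already implicit in the lemma's proof, is that the consecutive isolated pair $u<v$ must be chosen so that $v$ is not the last vertex of $H$ (so that connecting $u$ to the last vertex is a genuinely new edge that cannot play a minedge); in the exceptional situation where all consecutive isolated pairs of $H$ sit at the right end, one runs the left–right mirror image of the argument, using $R=\{uv\in E(G):\deg(v)=1\}$ in place of $L$.
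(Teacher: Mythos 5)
Your first paragraph is exactly the paper's proof: observe that $L\subseteq E(G)=M\cup S$ and invoke Lemma~\ref{lem:minsupdeg}, with the upper bound coming from Theorem~\ref{thm:ordereddich}. The ``robustness'' material faithfully re-runs the lemma's proof and adds two bookkeeping remarks; the observation about the single-minedge degenerate case (where $sat_{<}(n,G)=0$ by Observation~\ref{obs:single}, so the hypothesis must implicitly require at least two edges) is a real, if minor, gap that the paper also leaves implicit in Lemma~\ref{lem:minsupdeg}, while the worry about the isolated pair sitting at the right end is actually unnecessary: if $v$ were the last vertex of $H$, then already the first step fails because $uv$ must play a minedge of $G$, every minedge of the non-separable $G$ is strictly covered, and no edge can strictly cover an edge ending at the last vertex, giving a contradiction before one ever needs to connect $u$ to the last vertex. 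So the proposal is correct and follows the same route as the paper.
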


\begin{proof}
    As $L\subseteq E(G)= S \cup M$, from Lemma \ref{lem:minsupdeg} we obtain that $sat(n,G)=\Theta(n)$, as required.
\end{proof}

So far our results gave families of ordered graphs that have linear saturation function. In the next section we will give infinite many graphs that have bounded saturation function. The main general tool for proving that an ordered graph has a bounded saturation function is the notion of a witness. Similar to witnesses and explicit witnesses of $0$-$1$ matrices defined in \cite{berendsohn1}, we define the respective notions for ordered graphs:

\begin{defi}
	Given an ordered graph $G$ with no isolated vertices, an ordered graph $H$ is an \emph{explicit witness} of $G$ if $H$ saturates $G$ and has two consecutive isolated vertices. $H$ is a \emph{witness} of $G$ if $H$ has two consecutive isolated vertices and adding an arbitrary new edge incident to any of these two isolated vertices creates a copy of $G$.
	
	If $G$ is an ordered matching then we require only one isolated vertex in the above definitions.	
\end{defi}

Witnesses, as their name suggests, witness that a graph has a bounded saturation function. First, it is trivial by definition that if the saturation function of $G$ is $O(1)$ then for $n$ big enough a host graph of size $n$ saturating $G$ with $sat_<(n,G)$ edges must be an explicit witness for $G$.\footnote{We implicitly used this fact already in the proof of Theorem \ref{thm:ordereddich}.} Thus bounded saturation function for $G$ implies that $G$ has an (explicit) witness. Now we prove that the opposite is also true.\footnote{For simplicity we did not define witnesses for graphs with isolated vertices. Nevertheless, the definition and Lemma \ref{lem:witness} could be easily extended to handle this case as well by requiring more consecutive isolated vertices.}

\begin{lem}\label{lem:witness}
 	Let $G$ be an ordered graph without isolated vertices such that there exists an (explicit) witness of $G$. Then $sat_{<}(n,G)=O(1)$.
\end{lem}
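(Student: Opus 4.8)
The plan is to build, from a single (explicit) witness $W$ of $G$, a saturated host graph on $n$ vertices for every large $n$, with a bounded number of edges. First I would observe that since $G$ has no isolated vertices, an explicit witness $W$ of $G$ has two consecutive isolated vertices $p<p+1$ (and by the witness property, adding any edge from $p$ or $p+1$ to the rest of $W$ creates a copy of $G$; in fact being an explicit witness it is already saturated). The idea is to take $W$ and ``inflate'' the gap between $p$ and $p+1$: insert $n-|V(W)|$ new isolated vertices between them, obtaining a graph $H_n$ on $n$ vertices with exactly $|E(W)|=O(1)$ edges. The two isolated-vertex block of $W$ has become a long interval $I$ of consecutive isolated vertices in $H_n$, flanked by the rest of $W$ on the left and on the right.

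Next I would verify the two things needed. \emph{$H_n$ avoids $G$:} any copy of $G$ in $H_n$ uses only non-isolated vertices together with the edge structure, and the edges of $H_n$ are exactly the edges of $W$ sitting in the same cyclic/linear positions relative to the left part and the right part of $W$; since inserting isolated vertices into an interval that $W$ already had (as two isolated vertices) cannot create any new ordered subgraph on the edge set, a copy of $G$ in $H_n$ would yield a copy of $G$ in $W$ by contracting $I$ back down — contradiction. \emph{Adding any edge $e=ij$ creates a copy of $G$:} here I split into cases according to where the endpoints of $e$ lie. If both endpoints avoid the interior of $I$, then $e$ is an edge added to a copy of $W$ with (at least) the original two consecutive isolated vertices still present, so the witness property of $W$ gives a copy of $G$; one has to check that the extra isolated vertices of $I$ don't obstruct this, which they don't since they only sit where $W$ had isolated vertices. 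If at least one endpoint of $e$ lies in the interior of $I$, then $e$ is an edge incident to an isolated vertex that is consecutive (within $I$) with another isolated vertex; contracting $I$ appropriately maps this to adding an edge incident to one of the two designated isolated vertices of $W$, which by the witness definition creates a copy of $G$ — and that copy pulls back to $H_n$.

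The main obstacle I expect is the bookkeeping in the second case: one must argue that a new edge $e$ with an endpoint strictly inside the inflated interval can always be ``mapped down'' to one of the two guaranteed isolated vertices of $W$ so that the resulting copy of $G$ in $W+e'$ lifts back to a copy of $G$ in $H_n+e$. The subtlety is that $e$ may have its other endpoint also in the interior of $I$, or far outside; the clean way to handle this is to note that since all of $I$ consists of consecutive isolated vertices, any order-preserving injection from $V(G)$ into $V(W)$ witnessing a copy of $G$ that uses a designated isolated vertex can be ``spread out'' into $H_n$ by an order-preserving injection that is the identity on the left and right parts and distributes the used vertices of $I$ anywhere inside the inflated interval — in particular it can be chosen to include the (one or two) endpoints of $e$ that land in $I$. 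Formalizing this ``spreading'' as an order-embedding argument is routine but is where the real content of the lemma sits; everything else is direct substitution.
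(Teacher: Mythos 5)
Your proposal takes the same route as the paper's proof: replace the two consecutive isolated vertices of an explicit witness by a long interval of isolated vertices, check that this preserves $G$-avoidance, and then handle new edges by mapping the inflated picture back down to the witness via an order-preserving injection; the paper even flags the same bookkeeping issue you identify (edges landing inside the inflated interval) as the reason two consecutive isolated vertices are needed. Two small points you leave implicit that the paper spells out: (i) the lemma is stated for a (possibly non-explicit) witness, so one must first note that a witness can be greedily completed to an explicit witness without disturbing the two isolated vertices, since otherwise your argument that $H_n$ avoids $G$ (by contracting $I$ back into $W$) has nothing to contradict; and (ii) the paper's definition allows a witness of an ordered matching to have only one isolated vertex, a case you do not address, though the inflation argument goes through similarly there.
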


\begin{proof}
  First we prove that if an ordered graph $G$ has a witness $H'$ with two consecutive isolated vertices then it also has an explicit witness $H$. Indeed, we can greedily add edges to $H'$ until we get a graph $H$ that saturates $G$. Due to the definition of a witness, in $H$ the two isolated vertices are still isolated and thus $H$ indeed is an explicit witness. The case when $G$ is an ordered matching is analogous, we just need to replace two isolated vertices with one in the argument.
  
  Thus from now on we can assume that $G$ has an explicit witness $H$. Take an arbitrary number $n\ge |V(G)|$. If we replace in $H$ two adjacent isolated vertices by an interval $I$ of $n-|V(G)|+2$ consecutive isolated vertices, the graph $H_n$ we get has $n$ vertices and still avoids $G$. Further, the fact that by adding an arbitrary edge to $H$ we get a copy of $G$ implies that if we add any edge to $H_n$ we obtain a copy of $G$. In $H_n$ the vertices in $I$ are isolated and thus $H_n$ has $O(1)$ edges. We can conclude that $H_n$ is saturating $G$ and has $O(1)$ edges, which implies that $sat_{<}(n,G)=O(1)$, as claimed. 
   
  Note that we needed two adjacent isolated vertices to guarantee that when an edge is added inside $I$ then a copy of $G$ is created. However, it is easy to see that if $G$ is a matching then this is guaranteed already if $H$ has one isolated vertex which we replace with an interval $I$ of isolated vertices, thus the part of the lemma about matchings follows as well.
\end{proof}

\subsection{Saturation of linked matchings}\label{sec:linked}

This section is dedicated to the study of the saturation function of linked matchings. After defining them, first we prove for an infinite family of linked matchings that they have linear saturation function, then we concentrate on small cases and determine which linked matchings with at most three links have bounded saturation function. Finally we generalize some of our constructions to show for an infinite family of linked matchings that they have bounded saturation function.

\begin{defi}
Let $L_k$ be the ordered matching on vertex set $[2k]$ and edge set $\{(2i-1)(2i):i=1,\dots,k\}$. 

Let $\Gamma_k$ be the ordered matching on vertex set $[2k]$ and edge set $\{(1)(3)\}\cup\{(2i)(2i+3):i=1,\dots,k-2\}\cup\{(2k-2)(2k)\}$. See Figure \ref{fig:gamma} for illustrations.
\end{defi}

\begin{defi}
$\Gamma_{\{m_1,m_2,...,m_k\}}$ denotes the matching that we obtain from $\Gamma_k$ if for every $i$ we add $2m_i$ new vertices between the endvertices of the $i$th edge of $\Gamma_k$ (when ordered by their left endvertex) and outside the rest of the edges of $\Gamma_k$ and put a copy of $L_{m_i}$ on these $2m_i$ vertices. $\Gamma_{\{m_1,m_2,...,m_k\}}$ is called a \emph{linked matching}, while the edges of the underlying $\Gamma_k$ are called its \emph{link edges}.\footnote{We regard these link edges as being ordered according to the order of their left endvertices. Notice that the non-link edges are all minedges.} See Figure \ref{fig:gamma} for illustrations.
\end{defi}

Note that $L_k$ is separable and thus has linear saturation function by Claim \ref{thm:sep} for $k\ge 2$.

\begin{figure}[h]
	\centering
	\includegraphics[width=0.7\textwidth]{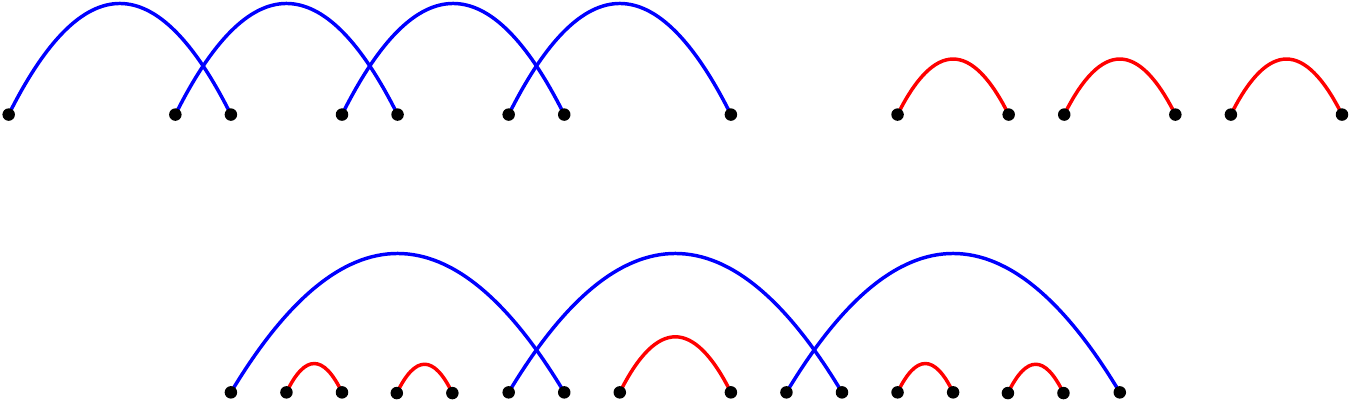}
	\caption{\label{fig:gamma} Examples: $\Gamma_4$, $L_3$ and 
	$\Gamma_{\{2,1,2\}}$.}
\end{figure}

\smallskip
The next claim follows immediately from Theorem \ref{thm:minsup}:

\begin{claim}\label{claim:allnonzero}
	$sat_{<}(n,\Gamma_{\{m_1,m_2,\dots,m_k\}})=\Theta(n)$ for $k>0$ and $m_i>0$ ($1\le i\le k$).
\end{claim}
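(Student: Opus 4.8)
The plan is to show that every edge of $\Gamma_{\{m_1,m_2,\dots,m_k\}}$ is either a minedge or a superedge, and then invoke Theorem~\ref{thm:minsup}. This requires understanding the structure of $\Gamma_k$ and what happens when we insert copies of $L_{m_i}$ inside the link edges.

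\begin{proof}
We claim that every edge of $G:=\Gamma_{\{m_1,m_2,\dots,m_k\}}$ is a minedge or a superedge, so that Theorem~\ref{thm:minsup} applies and gives $sat_{<}(n,G)=\Theta(n)$.

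The edges of $G$ are of two kinds: the link edges (the edges of the underlying $\Gamma_k$) and the non-link edges (the edges of the inserted copies of $L_{m_i}$). As noted after the definition of a linked matching, every non-link edge is a minedge. It remains to show that every link edge is a superedge. Since $m_i>0$ for every $i$, inside the $i$th link edge we have inserted a copy of $L_{m_i}$ with $m_i\ge 1$, so in particular there is at least one non-link edge $xy$ with both endvertices strictly between the endvertices of the $i$th link edge. Hence the $i$th link edge strictly covers $xy$ and is therefore a superedge. This holds for every link edge, so $E(G)=M\cup S$ and Theorem~\ref{thm:minsup} yields $sat_{<}(n,G)=\Theta(n)$.
\end{proof}

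The only subtlety is checking that the insertion of $L_{m_i}$ really does place both endvertices of some edge strictly inside the $i$th link edge; this is immediate from the definition of $\Gamma_{\{m_1,\dots,m_k\}}$, which says the $2m_i$ new vertices are added between the endvertices of the $i$th edge of $\Gamma_k$, and no genuine obstacle arises. In fact this argument does not use any property of $\Gamma_k$ beyond the fact that its edges become the link edges, so the same reasoning shows the statement for any linked matching built from any matching as its skeleton.
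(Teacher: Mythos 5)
Your proof is correct and takes exactly the route the paper intends: the paper states that the claim ``follows immediately from Theorem~\ref{thm:minsup},'' and your argument is simply the spelled-out verification that when $m_i>0$ every link edge strictly covers an inserted edge (hence is a superedge) while the non-link edges are minedges, so $E(G)=M\cup S$.
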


\begin{claim}\label{claim:firstnonzero}
     $sat_{<}(n,\Gamma_{\{m,0,0,...,0\}})=sat_{<}(n,\Gamma_{\{0,,...,0,0,m\}})=\Theta(n)$ for $m \in \mathbb{N}$.
\end{claim}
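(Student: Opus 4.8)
The plan is to reduce this to Lemma~\ref{lem:minsupdeg}, exactly as Claim~\ref{claim:allnonzero} was reduced to Theorem~\ref{thm:minsup}. Consider $G = \Gamma_{\{m,0,0,\dots,0\}}$ with $k$ link edges and $m\ge 1$ (if $m=0$ the matching is just $\Gamma_k$, whose behaviour follows from the cases already treated or can be folded into the same analysis). First I would set up the vertex picture: the underlying $\Gamma_k$ has link edges $e_1=(1)(3)$, $e_2=(2)(5)$, \dots, $e_{k-1}=(2k-4)(2k-1)$, $e_k=(2k-2)(2k)$, ordered by left endpoint, and we blow up only $e_1$ by inserting a copy of $L_m$ between vertices $1$ and $3$ (and outside all other edges). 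So $G$ consists of: the first link edge $e_1$, now a superedge strictly covering $2m$ vertices carrying $m$ disjoint minedges; and the remaining link edges $e_2,\dots,e_k$ of $\Gamma_k$, which are pairwise ``interlocking'' (each $e_i$ for $2\le i\le k-1$ has the form $(2i)(2i+3)$, strictly covering the left endpoint of $e_{i+1}$ but not its right endpoint, and $e_k$ strictly covers the right endpoint of $e_{k-1}$).

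The key step is to verify the hypothesis $L \subseteq S\cup M$ of Lemma~\ref{lem:minsupdeg}, where $L=\{uv\in E(G): \deg(u)=1\}$, $M$ is the set of minedges and $S$ the set of superedges. The non-link edges of $G$ are exactly the $m$ minedges inside $e_1$, so those lie in $M$. For the link edges: each link edge $uv$ of $\Gamma_k$ has $\deg(u)=\deg(v)=1$ in $G$ (link edges of $\Gamma_k$ are a matching, and the blow-up only adds new vertices of degree one, not incident to link edges), so every link edge is a candidate for $L$. I must check each link edge is a superedge, i.e. strictly covers some edge. The edge $e_1=(1)(3)$ strictly covers the $2m$ inserted vertices, in particular the minedges of $L_m$, so $e_1\in S$. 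For $2\le i\le k-1$, the link edge $e_i = (2i)(2i+3)$ strictly covers vertex $2i+1$ and vertex $2i+2$, which are precisely the endpoints $l(e_{i-1}')$ etc. — more carefully, $e_i$ strictly covers the left endpoint of $e_{i+1}$ together with at least one other vertex spanned by an edge; the cleanest statement is that $e_i=(2i)(2i+3)$ strictly covers the edge $(2i+1)(2i+4)=e_{i+1}$'s left endpoint — no: I should instead note that consecutive link edges overlap so that $e_2=(2)(5)$ strictly covers $e_1$'s right endpoint $3$ and also vertex $4$; since $e_1$ has an endpoint strictly inside $e_2$ is not enough, I need an edge strictly inside. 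So the argument is: for $i\ge 2$, $e_i=(2i)(2i+3)$ strictly covers the two consecutive vertices $2i+1,2i+2$, which are the right endpoint of $e_{i-1}$ (equal to $2i+1$) and the left endpoint of $e_{i+1}$ (equal to $2i+2$), hence $e_i$ strictly covers no complete edge of $\Gamma_k$ by itself — this is the subtlety and I need to re-examine it directly from the structure of $\Gamma_k$; the point of $\Gamma_k$ is precisely that it is \emph{not} covered by Theorem~\ref{thm:minsup}, so the blow-up of $e_1$ must be what supplies the missing coverage, and I should argue that with $e_1$ now a superedge, while the other link edges might not individually strictly cover an edge, one checks that $\deg(r(e_i))=1$ lets us use the ``$R$'' version in the footnote of Lemma~\ref{lem:minsupdeg} or, more robustly, directly run the witness/host argument.

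Given that delicacy, the honest plan is: apply Lemma~\ref{lem:minsupdeg} if the covering condition holds; otherwise mimic its proof directly. Concretely, by Theorem~\ref{thm:ordereddich} it suffices to prove $sat_<(n,G)=\Omega(n)$, so assume a saturating host $H$ on $n$ vertices has two adjacent isolated vertices $u<v$; adding $uv$ creates a copy of $G$ in which $uv$ must play the role of a degree-one-endpoint edge, and since $u,v$ are isolated it must be one of the $m$ minedges inside the blown-up first link edge. Then connecting $u$ to the last vertex of $H$ creates a copy of $G$ using a degree-one edge that is \emph{not} one of those minedges, hence (by $L\subseteq S\cup M$, or by the explicit list of link edges) that edge must be a superedge strictly covering some edge $xy$ with $u<x<y<w$, and the standard ``first vertex $w$ for which $uw$ is not a minedge'' argument with $w'$ its predecessor gives the contradiction: in the copy of $G$ created by $uw'$, the edge $uw'$ is a minedge, so replacing it by $xy$ yields a copy of $G$ already in $H$. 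The symmetric statement $\Gamma_{\{0,\dots,0,m\}}$ follows either by the left–right mirror symmetry (reversing the vertex order turns $\Gamma_{\{m,0,\dots,0\}}$ into $\Gamma_{\{0,\dots,0,m\}}$ after relabelling, since $\Gamma_k$ reversed is again a $\Gamma_{k'}$-type matching) or by using the $R$-version of Lemma~\ref{lem:minsupdeg} noted in its footnote.

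The main obstacle I anticipate is precisely the verification that every degree-one-endpoint edge of $G$ is a superedge or a minedge — i.e. that blowing up \emph{only the first} link edge already forces each of the other link edges into $S$. If some middle link edge $e_i$ fails to strictly cover a complete edge, Lemma~\ref{lem:minsupdeg} does not apply verbatim and one must check whether the $R$-version ($\deg(v)=1$) rescues it, or supply the short direct host-graph argument above; sorting out this structural bookkeeping about which edges of $\Gamma_k$ strictly cover which is where the real (if modest) work lies.
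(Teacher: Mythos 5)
The obstacle you flag at the end is real and fatal to the plan as written: in $\Gamma_{\{m,0,\dots,0\}}$ every link edge $e_i$ with $i\ge 2$ strictly covers only the right endpoint of $e_{i-1}$ and (for $i<k$) the left endpoint of $e_{i+1}$, whose other endpoints lie outside $e_i$, so none of $e_2,\dots,e_k$ is a superedge. Since $G$ is a matching, $L=R=E(G)$, so neither $L\subseteq S\cup M$ nor the $R$-version holds, and the ``mimic the proof of Lemma~\ref{lem:minsupdeg}'' fallback still needs the non-minedge at $u$ to strictly cover some edge $xy$ to perform the replacement -- exactly what is unavailable when $uw$ plays the role of $e_i$ with $i\ge 2$. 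So the proposal reduces the claim to a step it cannot carry out; no amount of bookkeeping about which edges of $\Gamma_k$ cover which will produce a superedge where there isn't one.

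The paper's proof uses a different idea that you should compare against. After fixing consecutive isolated vertices $w'<w$, it takes the \emph{leftmost} $u$ such that adding $uw$ creates a copy $G'$ in which $uw$ is a \emph{minedge} (this exists because $w'w$ must be a minedge and the edge from the first vertex to $w$ cannot be, as no minedge of $G$ is incident to $l(e_1)$). Since all minedges of $G$ sit inside $e_1$, the first link edge of $G'$ is some $xy\in E(H)$ with $x<u<w<y$. Adding $xw$ instead yields another copy $G''$, and by the minimality of $u$ the edge $xw$ cannot be a minedge of $G''$, hence (matching!) it is the $i$th link edge for some $i$. The punchline is a stitching argument: the first $i-1$ link edges of $G''$, the $m$ minedges of $G''$, and the first $k-i+1$ link edges of $G'$ (starting at $xy$) interlock correctly and lie entirely in $H$, giving a copy of $G$ in $H$ -- a contradiction. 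This ``splice two forced copies together at a shared vertex $x$'' step is the missing ingredient; it replaces the superedge/replacement step that Lemma~\ref{lem:minsupdeg} uses and that provably cannot work here. Your treatment of the symmetric case $\Gamma_{\{0,\dots,0,m\}}$ via the order-reversing automorphism of $\Gamma_k$ is fine and matches the paper's implicit use of symmetry, and the reduction of $m=0$ to Claim~\ref{claim:minedge} is also correct.
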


\begin{proof}
   Let $G=\Gamma_{\{m,0,0,...,0\}}$, a linked matching with $k>1$ link edges and $m$ minedges. If $m=0$ then $G$ does not contain a minedge and thus we are done by Claim \ref{claim:minedge}. Thus from now on we assume that $m>0$. Let $H$ be a host graph on $n$ vertices saturating $G$ with $sat_<(n,G)$ edges. If $H$ has no two consecutive isolated vertices, then $sat_<(n,G)=\Omega(n)$, as required. Otherwise, there are consecutive isolated vertices $w',w$ in $H$. Adding the edge $w'w$ creates a copy of $H$ in which this edge must be a minedge. Adding the edge connecting $w$ with the first vertex creates a copy of $H$ in which this edge cannot be a minedge (as in $G$ no minedge is incident to the first vertex of $G$). Thus, if we connect one-by-one $w$ to vertices to the left from $w$ there will be a leftmost vertex $u$ such that $uw$ is a minedge in a copy $G'$ of $G$ created by adding $uw$ to $H$. 

   Since all minedges of $G$ are contained inside the first link edge, the first link of $G'$ is an edge $xy$ of $H$ such that $x<u$. Now adding the edge $xw$ to $H$ creates a copy $G''$ of $G$. Notice that $xw$ cannot be a minedge in $G''$ due to our choice of $u$. Thus $xw$ is the $i$th link edge of $G''$ for some $i$. Now let $G'''$ be the graph formed by the first $i-1$ link edges of $G''$, the $m$ minedges of $G''$ and the first $k-i+1$ link edges of $G'$. This $G'''$ is a copy of $G$ in $H$, a contradiction. See Figure \ref{fig:firstnonzero} for an illustration.
\end{proof}

\begin{figure}[h]
	\centering
	\includegraphics[width=0.75\textwidth]{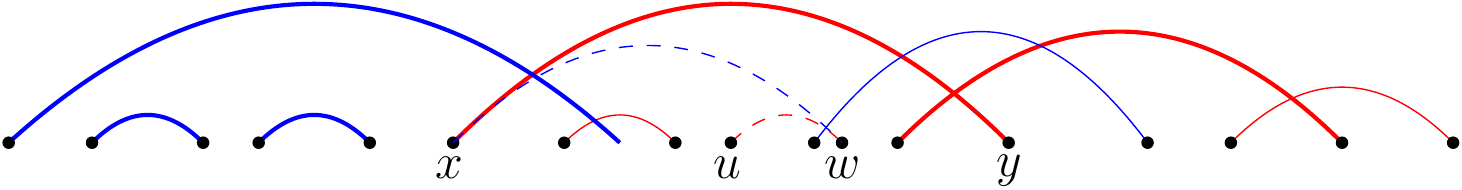}
	\caption{\label{fig:firstnonzero} Proof of Claim \ref{claim:firstnonzero} 
for $G=\Gamma_{\{2,0,0\}}$. Edges not in $H$ are drawn dashed. $G'$ (red) and 
	$G''$ (blue) in $H$ together contain a copy of $G$ (bold) in $H$.}
\end{figure}

\begin{cor}
	$sat_{<}(n,\Gamma_k)=\Theta(n)$ for $k\ge 2$. 	
\end{cor}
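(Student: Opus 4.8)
The plan is to recognize $\Gamma_k$ as a degenerate linked matching and invoke the results already established. By the definition of $\Gamma_{\{m_1,\dots,m_k\}}$, choosing all $m_i=0$ adds no new vertices (hence no new minedges), so $\Gamma_k=\Gamma_{\{0,0,\dots,0\}}$ with $k$ zeros. The statement is then precisely the $m=0$ instance of Claim \ref{claim:firstnonzero}, whose hypothesis only requires $k>1$, matching our assumption $k\ge 2$; this yields $sat_{<}(n,\Gamma_k)=\Theta(n)$ immediately.

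Alternatively, and perhaps more transparently, I would argue directly via Claim \ref{claim:minedge}: it suffices to verify that $\Gamma_k$ has no minedge. When all $m_i=0$ there are no non-link edges, so every edge of $\Gamma_k$ is a link edge, and each link edge $(a)(b)$ satisfies $b-a\ge 2$ — indeed the edges are $(1)(3)$, the edges $(2i)(2i+3)$ for $1\le i\le k-2$, and $(2k-2)(2k)$, with gaps $2$, $3$, and $2$ respectively. Hence every edge has a vertex strictly between its endpoints, so $\Gamma_k$ contains no minedge, and Claim \ref{claim:minedge} gives $sat_{<}(n,\Gamma_k)=\Theta(n)$ (the $\Omega(n)$ lower bound coming from the absence of two consecutive isolated vertices in a saturating host, the $O(n)$ upper bound from Theorem \ref{thm:ordereddich}).

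There is essentially no obstacle here; the content is a routine bookkeeping check that $\Gamma_k$ lies in an already-treated family. The only point to keep in mind is the base case $k=2$, where $\Gamma_2$ is just the pair of crossing edges $(1)(3)$ and $(2)(4)$: this still has no minedge, so both arguments above apply verbatim, and in particular one need not separately handle $\Gamma_2$ by checking separability or nestedness (it is neither).
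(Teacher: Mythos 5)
Your proof is correct and matches the paper's intent: the corollary is stated immediately after Claim~\ref{claim:firstnonzero}, and the paper derives it exactly as you do in your first paragraph, by taking $m=0$ there (whose proof in that case is precisely your second argument via Claim~\ref{claim:minedge}, since $\Gamma_k$ has no minedge). Your explicit check of the edge gaps and the remark on $k=2$ are accurate and require no further comment.
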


Next we concentrate on linked matchings with $k\le 3$ links. First, $sat_<(n,\Gamma_{\{m\}})=0$ if $m=0$ by Observation \ref{obs:single} and $\Theta(n)$ otherwise by Claim \ref{claim:allnonzero}. Second, $sat_<(n,\Gamma_{\{m_1,m_2\}})=\Theta(n)$. Indeed, this follows from Claim \ref{claim:firstnonzero} if at least one of $m_1,m_2$ equals to $0$ and from Claim \ref{claim:allnonzero} otherwise. The problem becomes much more interesting when $k=3$.

The following is again a direct corollary of Claim \ref{claim:allnonzero} and Claim \ref{claim:firstnonzero}:

\begin{cor}
	 $sat_{<}(n,\Gamma_{\{m,0,0\}})=sat_{<}(n,\Gamma_{\{0,0,m\}})=\Theta(n)$ for $m>0$. 	
	 
	  $sat_{<}(n,\Gamma_{\{m_1,m_2,m_3\}})=\Theta(n)$ for $m_1,m_2,m_3>0$. 
\end{cor}

For the remaining cases we only consider the subcases when each $m_i$ is either 
$0$ or $1$ and prove that all of these have bounded saturation function:

\begin{thm}\label{thm:gamma3}
	The saturation functions of $\Gamma_{\{0,1,0\}}, \Gamma_{\{1,0,1\}},\Gamma_{\{1,1,0\}}$ and $\Gamma_{\{0,1,1\}}$ are bounded.
\end{thm}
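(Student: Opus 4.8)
The plan is to exhibit, for each of the four matchings, an explicit witness (or just a witness, since by Lemma~\ref{lem:witness} a witness suffices for matchings, where we only need one isolated vertex) on a constant number of vertices; then Lemma~\ref{lem:witness} immediately gives $sat_<(n,\cdot)=O(1)$. So the whole task reduces to four finite constructions. For each target matching $G=\Gamma_{\{m_1,m_2,m_3\}}$, I would build a small ordered graph $H$ containing one isolated vertex $z$ (placed somewhere among the other vertices) such that (i) $H$ avoids $G$, and (ii) for every vertex $w\neq z$, adding the edge $zw$ creates a copy of $G$. Condition (ii) is the substantive one: for each position of $w$ relative to $z$, the new edge $zw$ must be able to play the role of one of the three link edges of $G$ (it cannot be a minedge, since $z$ was isolated and minedges of $\Gamma_k$ are internal to link edges only for $k\ge 2$... actually the new edge could serve as a link edge, and the minedges must already be present in $H$). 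The idea is to arrange in $H$ several "partial copies" of $G$ — each missing exactly one link edge — all sharing the isolated vertex $z$ as the missing endpoint, and glued together so that no full copy of $G$ is present.

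Concretely I would proceed matching by matching, starting with the simplest. For $\Gamma_{\{0,1,0\}}$ the graph has three link edges and a single minedge nested inside the middle link; a natural candidate $H$ consists of two overlapping "almost-$G$" configurations arranged so that the isolated vertex $z$ lies in the slot of the first (resp. last) link-edge's left (resp. right) endpoint, with the middle-link minedge available in $H$, and one checks the three cases $w<z$, $w$ immediately adjacent, $w>z$ by hand. For $\Gamma_{\{1,0,1\}}$ the two minedges sit inside the first and third links, so $H$ must contain two disjoint minedges positioned correctly relative to $z$; here I expect to need $z$ placed so that a leftward edge $zw$ completes a copy using a pre-existing minedge on the right, and a rightward edge $zw$ completes a copy using a pre-existing minedge on the left — a symmetric construction. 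The cases $\Gamma_{\{1,1,0\}}$ and $\Gamma_{\{0,1,1\}}$ are mirror images of each other (reverse the order), so it suffices to handle one; these have minedges inside two consecutive links, and the construction should combine features of the previous two.

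The main obstacle, as the paper's own footnotes hint ("in some cases it was surprisingly hard to find constructions"), is not any deep argument but finding the right finite configurations: one must simultaneously ensure $H$ is $G$-free (which fails easily if the partial copies overlap too generously, since two partial copies sharing enough structure may already contain a full $G$) and that every added edge at $z$ completes a copy regardless of which side $w$ is on and how far (which fails if the partial copies are too sparse). I would set up each $H$ as a union of a small number of copies of $G$ minus one link edge, identified along the endpoint that becomes $z$, then verify $G$-freeness by a short case analysis on which edge of a hypothetical embedded $G$ would be the "long" link edges (using that the non-link edges are forced to be the minedges, pinning down positions), and verify the saturation condition by checking each possible position of $w$. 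Since everything is on a bounded vertex set, all verifications are finite checks; I would present the four graphs explicitly (by vertex sets and edge lists, or figures) and tabulate the case analysis rather than prove anything asymptotic.
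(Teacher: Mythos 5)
Your framework is exactly the paper's: exhibit a witness for each of the four matchings (exploiting the left-right mirror symmetry to reduce $\Gamma_{\{0,1,1\}}$ to $\Gamma_{\{1,1,0\}}$), noting that for matchings a single isolated vertex suffices in the definition of a witness, and then invoke Lemma~\ref{lem:witness}. But that is also where your proposal stops, and it is precisely where the content of this theorem begins. You never write down any of the three witness graphs $H$. Phrases such as ``a natural candidate $H$ consists of two overlapping `almost-$G$' configurations'' or ``I expect to need $z$ placed so that\ldots'' describe a search heuristic, not a construction, and neither of the two verifications (that $H$ avoids $G$, and that every edge at the isolated vertex completes a copy) can even be attempted without a concrete $H$. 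As you yourself observe, the entire difficulty of this theorem is constructive: the paper needs three separate ad hoc graphs on roughly a dozen vertices each, chosen carefully enough that the partial copies share structure without already containing $G$, and each then requires a nontrivial case analysis. Nothing in the proposal closes that gap, so as written this is a plan for a proof rather than a proof.

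A smaller but substantive error: your parenthetical claim that the new edge $zw$ ``cannot be a minedge\ldots the minedges must already be present in $H$'' is false, and a correct witness must in fact allow for the opposite. Since $z$ is isolated in $H$, the vertex $w$ can equally well be of degree zero in $H$, in which case $zw$ has both endpoints of degree one in $H+zw$; if $z$ and $w$ are moreover consecutive among the vertices used by the embedded copy, then $zw$ is eligible to play the role of the unique minedge $f$. The paper's witness for $\Gamma_{\{0,1,0\}}$ relies on exactly this: when $w$ is adjacent to the isolated vertex, the completed copy of $G$ uses $zw$ as the minedge and a separate triple of edges $\{a,b,c\}$ of $H$ as the three link edges. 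If one designs $H$ under the assumption that $zw$ is always a link edge, the cases where $w$ is very close to $z$ have no hope of producing a copy of $G$, and the construction will fail. So the constraint you listed is not only unverified but actively misleading as a design principle.
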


For each of these graphs the hard part was to find a witness $H$ for a graph $G$. 
To prove that they are indeed witnesses involves only some case analysis which 
could be easily done even by a computer program, showing first that they 
avoid $G$, second that adding any edge to the isolated vertex of $H$ creates a 
copy of $G$. Instead of using a computer, we do the case analysis explicitly and as efficiently as possible. The following three claims together imply Theorem \ref{thm:gamma3}.

\begin{figure}[h]
\centering
\includegraphics[width=0.6\textwidth]{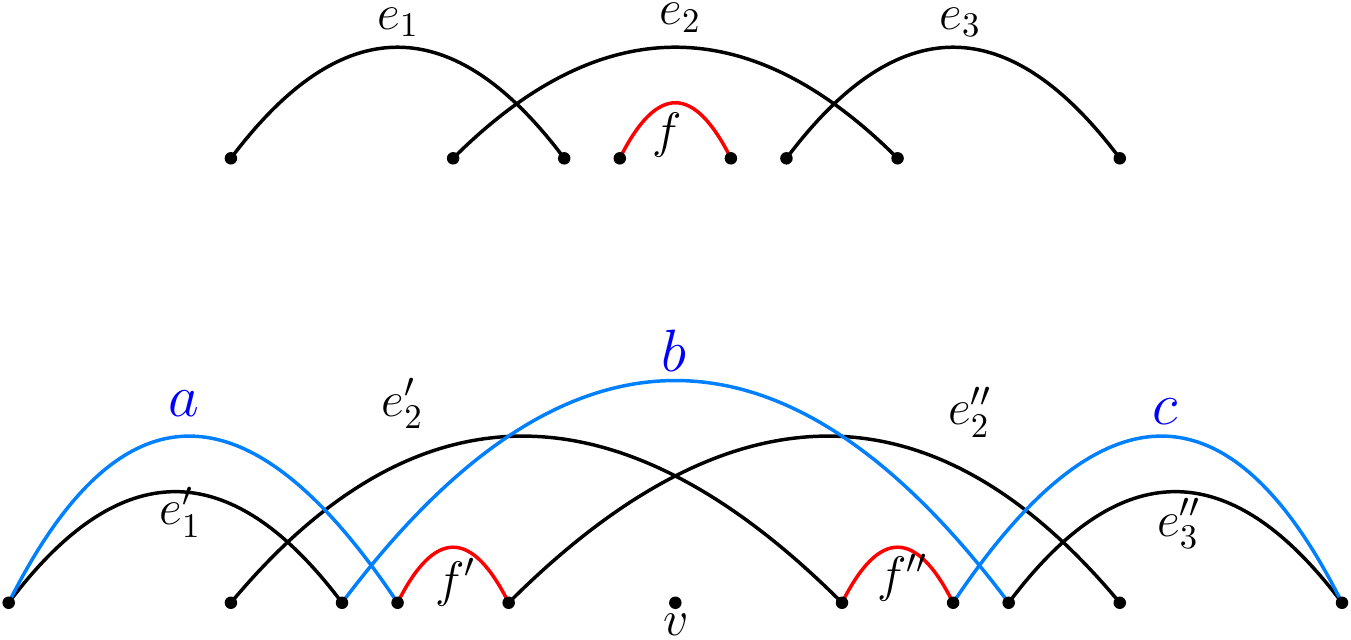}
\caption{\label{fig:010} $\Gamma_{\{0,1,0\}}$ and its witness graph $H$.}
\end{figure}

\begin{claim}
     $sat_{<}(n,\Gamma_{\{0,1,0\}})=O(1)$. 
\end{claim}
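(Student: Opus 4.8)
The plan is to exhibit a \emph{witness} for $G:=\Gamma_{\{0,1,0\}}$ and then invoke Lemma~\ref{lem:witness}. First I would make $G$ explicit: unravelling the definition, $\Gamma_{\{0,1,0\}}$ is the ordered matching on the vertex set $[8]$ with edge set $\{13,27,45,68\}$ --- three pairwise disjoint ``short'' edges $13<45<68$ from left to right, plus one ``long'' link edge $27$ that crosses $13$ from the left, crosses $68$ from the right, and strictly covers the minedge $45$. Note that $G$ is invariant under reversing the order ($i\mapsto 9-i$ swaps $13$ and $68$ and fixes $27$ and $45$), a symmetry I would build into the construction. Since $G$ is a matching with no isolated vertex, by Lemma~\ref{lem:witness} it suffices to produce a finite ordered graph $H$ together with a designated isolated vertex $z$ such that $H$ contains no copy of $G$ and adding the edge $zx$ to $H$ creates a copy of $G$ for every vertex $x\neq z$.

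Constructing such an $H$ is the creative core of the proof, and I expect it to be the main obstacle; as the paper remarks, there is no mechanical recipe. The idea I would pursue is to place $z$ well inside $H$ and to split the possible positions of $x$ into regimes according to the role $zx$ will play: if $x$ lies close to $z$ (on either side), $zx$ should play the covered minedge $45$, so $H$ must already contain, far to the left, an edge in the role of $13$ with a vertex between its endpoints destined to be $l(27)$, and (by the reversal symmetry) far to the right an edge in the role of $68$ with a vertex between its endpoints destined to be $r(27)$, joined by a single long edge playing $27$; if $x$ lies further left, $zx$ should play the short edge $13$, which requires the left endpoint of some long edge (reaching past $z$) to sit in the gap between $x$ and $z$, while a suitably ordered pair $e_{45}<e_{68}$ to the right of $z$ receives that long edge's right endpoint; symmetrically $zx$ plays $68$ when $x$ lies further right. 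The in-between (``quarter'') positions of $x$ would be covered by including enough pairwise nested long edges that one of their endpoints always falls into the required gap, while the nesting prevents these long edges from themselves forming the superedge of a copy of $G$ inside $H$. I would then fix one concrete gadget of this shape on a bounded number of vertices (the graph $H$ in Figure~\ref{fig:010}).

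Once $H$ is fixed, the rest is a finite, elementary verification that I would organise in two steps. First, that $H$ avoids $G$: any copy would need one edge of $H$ in the role of the superedge $27$ and one in the role of the covered minedge $45$, and the placement of the short edges together with the nested structure of the long edges excludes every possibility --- a short case check. Second, that adding $zx$ always creates $G$: partition $V(H)\setminus\{z\}$ into the intervals above and, for each, name explicitly the three edges of $H$ that together with $zx$ realise the ordered pattern $13,27,45,68$; the reversal symmetry of $H$ and $G$ roughly halves this bookkeeping. Lemma~\ref{lem:witness} then turns $H$ into saturated host graphs with $O(1)$ edges on every $n$, yielding $sat_{<}(n,\Gamma_{\{0,1,0\}})=O(1)$.
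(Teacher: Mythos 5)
Your framing is exactly the paper's: identify $G=\Gamma_{\{0,1,0\}}$ as the matching on $[8]$ with edges $\{13,27,45,68\}$, note the reversal symmetry $i\mapsto 9-i$, exhibit a witness $H$ with a designated isolated vertex, and then invoke Lemma~\ref{lem:witness}. The qualitative description of what the witness must do --- the new edge $zx$ must be usable as $45$ when $x$ is near $z$, as $13$ when $x$ is far left, as $68$ when $x$ is far right, with nested long edges to cover the in-between positions and to avoid accidentally creating $G$ in $H$ itself --- is a correct reading of the constraints and is indeed the shape of the paper's construction (two overlapping ``half-copies'' $\{e'_1,e'_2,f'\}$ and $\{e''_2,e''_3,f''\}$ flanking a nested triple $\{a,b,c\}$ around $v$).

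The gap is that the witness is never actually produced. You say explicitly that constructing $H$ is ``the creative core of the proof'' and then write ``I would then fix one concrete gadget of this shape on a bounded number of vertices (the graph $H$ in Figure~\ref{fig:010})'' --- but this refers to the paper's own figure, which defeats the point of a blind attempt. The verification steps you outline (that $H$ avoids $G$ because every candidate for the superedge $27$ fails to leave room for the covered $45$ together with $13$ or $68$, and that each regime for $x$ comes with three named edges of $H$ completing the pattern) are the right kind of checks and match the paper's case analysis, but they cannot be carried out without a concrete vertex/edge list. As the claim stands, the argument reduces to ``a witness of the right shape exists,'' which is precisely what needs to be demonstrated. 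To close the gap you would need to write down an explicit ordered graph on $O(1)$ vertices with one isolated vertex, list its edges, and then perform the finite avoidance and completion checks --- there is no shortcut via Lemma~\ref{lem:witness} without that object in hand.
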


\begin{proof}
    Let $G=\Gamma_{\{0,1,0\}}$, its link edges are denoted by $e_1,e_2,e_3$, in 
    this order and its minedge is denoted by $f$. Furthermore let $H$ be the 
    graph drawn on Figure \ref{fig:010} bottom with edges named according to 
    the figure. We claim that $H$ is a witness for $G$, which by Lemma 
    \ref{lem:witness} implies the statement of the claim. 
    
    First we show that $H$ avoids $G$. Assume on the contrary that there is a copy of $G$ in $H$, then there is an edge $e=u_1u_2 \in E(H)$ such that it has the same role as edge $e_2$ in $G$, in particular it is a superedge. Notice that $|\{v \in V(G): v> l(e_2) \}|=|\{v \in V(G): v < r(e_2)\}|=6$ which implies that $u_1$ (resp. $u_2$) has to have at least $6$ vertices on its right (resp. left) side in $H$. The only edge in $H$ that satisfies this property is the edge $b$. Then the minedge $f$ can be mapped to either $f'$ or $f''$. If it is mapped to $f'$ (resp. $f''$) then $e_1$ (resp. $e_3$) cannot be mapped to an edge of $H$ since there is no vertex between $l(b)$ and $l(f')$ (resp. $r(b)$ and $r(f'')$).
    
    It remains to show that adding an arbitrary edge incident to $v$ in $H$ 
    creates a copy of $G$. Let $H'$ be the subraph of $H$ with edge set 
    $\{e'_1,e'_2,f'\}$ and $H''$ be the subgraph of $H$ with edge set 
    $\{e''_2,e''_3,f''\}$. If we connect $v$ to some $w$ with $w>l(f'')$ then 
    $H'+vw$ contains a copy of $G$. Similarly, if $w<r(f')$ then $H''+wv$ 
    contains a copy of $G$. Finally if $w=l(f'')$ or $w=r(f')$ then we can 
    define $H^{*}$ to be the subgraph with edge set $\{a,b,c\}$ and see that 
    $H^{*}+vw$ contains a copy of $G$, concluding the proof.
\end{proof}

\begin{claim}
    $sat_{<}(n,\Gamma_{\{1,0,1\}})=O(1)$. 
\end{claim}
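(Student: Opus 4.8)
The plan is to exhibit a \emph{witness} for $G:=\Gamma_{\{1,0,1\}}$ and then apply Lemma~\ref{lem:witness}; since $G$ is an ordered matching, it suffices to find a finite ordered graph $H$ with a single isolated vertex $v$ such that $H+vw$ contains a copy of $G$ for every other vertex $w$ of $H$. First I would unwind the definition of $G$: it is the ordered matching on $[10]$ with edge set $\{\{1,5\},\{2,3\},\{4,7\},\{6,10\},\{8,9\}\}$, obtained from $\Gamma_3=\{\{1,3\},\{2,5\},\{4,6\}\}$ by inserting one minedge strictly inside the link edge $e_1$ (to the left of $e_2$) and one strictly inside $e_3$ (to the right of $e_2$). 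Its link edges are $e_1=\{1,5\}$, $e_2=\{4,7\}$, $e_3=\{6,10\}$; crucially $e_1$ and $e_3$ are superedges (each strictly covering exactly one minedge), while $e_2$ is neither a minedge nor a superedge, which is exactly why Theorem~\ref{thm:minsup}, Lemma~\ref{lem:minsupdeg} and Claim~\ref{claim:firstneighbors} do not apply. Also note the overlap pattern: $e_1$ crosses $e_2$, $e_2$ crosses $e_3$, and $e_1$, $e_3$ are disjoint.

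Next I would construct $H$ (displayed in a figure, in the style of Figure~\ref{fig:010}) as the edge-union of a constant number of "near-copies" of $G$ arranged around $v$, overlapping heavily in their middle part (around the $e_2$-edge). Concretely: to the left of $v$ the structure of $G-e_3$ with $v$ in the $l(e_3)$-position, to the right the structure of $G-e_1$ with $v$ in the $r(e_1)$-position, and one or two further copies of $\{e_1,e_2,e_3\}$ in which $v$ falls strictly inside the "$e_1$"-edge (resp. the "$e_3$"-edge), i.e.\ in a minedge-slot. The idea is that for $w$ far to the right, $vw$ plays $e_3$ and completes the first copy; for $w$ far to the left, $vw$ plays $e_1$ and completes the second; for $w$ close to $v$, $vw$ plays one of the two minedges (or $e_2$) and completes one of the last copies; and in every case the remaining four edges are already present in $H$. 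Here the flexibility that $e_2$ is not a superedge is useful: the "$e_2$"-edges of these copies need not strictly cover anything and can be placed wherever convenient. I would split the positions of $w$ into a handful of intervals and in each one point to the completed copy; once $H$ is drawn, this direction is a routine finite check.

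The real work — and the main obstacle — is to design $H$ so that it contains no copy of $G$, and to verify it. The argument would mirror the $\Gamma_{\{0,1,0\}}$ proof, but run for the \emph{pair} of superedges: in any copy of $G$ in $H$, the image of $e_1$ must be an edge of $H$ whose left endpoint has $\ge 9$ vertices of $H$ to its right and whose right endpoint has $\ge 4$ to its left, and which strictly covers an edge of $H$ (the image of the minedge $\{2,3\}$); symmetrically the image of $e_3$ has $\ge 9$ vertices to its left, $\ge 4$ to its right, and strictly covers an edge; moreover these two images are disjoint and each crosses the image of $e_2$, which in turn has $\ge 6$ vertices of $H$ on either side. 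I would arrange $H$ so that only a couple of its edges can play $e_1$ and only a couple can play $e_3$, and then check that every admissible choice pushes the image of $e_2$ or of one of the two minedges into a position absent from $H$ — a short explicit case distinction. The delicate tension, flagged by the authors, is that $H$ must be rich enough around $v$ that the two long superedge-shapes are available in several phases (so $vw$ can be whichever edge of $G$ is still missing, wherever $w$ lands), yet sparse enough that no combination of the edges of its overlapping near-copies assembles a full copy of $G$. I expect the right $H$ to be somewhat ad hoc; once it is pinned down, both verifications are finite.
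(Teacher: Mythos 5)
Your proof plan correctly identifies the strategy the paper uses: exhibit a finite witness $H$ (with a single isolated vertex $v$, since $G$ is a matching) and invoke Lemma~\ref{lem:witness}. Your unwinding of $G=\Gamma_{\{1,0,1\}}$ is exactly right — vertex set $[10]$, edge set $\{\{1,5\},\{2,3\},\{4,7\},\{6,10\},\{8,9\}\}$, with $e_1,e_3$ superedges and $e_2$ neither a minedge nor a superedge — and the positional constraints you derive (at least $9$ vertices to one side of the image of each of $e_1,e_3$, at least $6$ to each side of the image of $e_2$, the strictly covered minedge images, the crossing/disjointness pattern) are correct and are precisely the kind of counting the paper uses to rule out copies of $G$ in its witness.

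The gap is that you never actually produce the witness. Everything after ``I would construct $H$'' is a description of the \emph{shape} you expect $H$ to have and a list of properties you would then check, but no explicit graph is given, and you say so yourself (``I expect the right $H$ to be somewhat ad hoc; once it is pinned down, both verifications are finite''). For this statement the explicit witness \emph{is} the content of the proof. The paper's $H$ is a specific $21$-vertex graph, symmetric about its one isolated vertex $v$, carefully tuned so that (a) exactly ten vertices lie on each side of $v$ (so a copy of $G$ entirely on one side would have to use all of them, which is then killed by a direct check that $e_3$ has no valid image), (b) the only edges spanning $v$ are four edges that each fail to be an image of any $e_i$ after a short case analysis, and (c) three overlapping ``near-copies'' around $v$ cover all positions of $w$. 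Your sketch names the intended roles of these pieces but does not pin down the vertex count, the overlap, or the spanning edges, and those choices are exactly where a naive construction either fails to avoid $G$ or fails to catch all $w$. As written, the proposal is a correct plan, not a proof: you would need to specify $H$ concretely and carry out both finite checks before Lemma~\ref{lem:witness} can be applied.
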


\begin{proof}
    Let $G=\Gamma_{\{1,0,1\}}$, its link edges are denoted by $e_1,e_2,e_3$, in 
    this order and its minedge covered by $e_1$ (resp. $e_3$) is denoted by 
    $f_1$ (resp. $f_2$). Furthermore let $H$ be the graph drawn on Figure 
    \ref{fig:101} bottom with vertices named according to the figure. We claim 
    that $H$ is a witness for $G$, which by Lemma \ref{lem:witness} implies the 
    statement of the claim. 
	
	First we show that adding an arbitrary edge incident to $v$ in $H$ creates 
	a copy of $G$. As both $G$ and $H$ are symmetric, it is enough to check 
	that connecting $v$ to an arbitrary vertex $w$ to the right from $v$ 
	introduces a copy of $G$. We have three cases:
    
    Case 1: $v < w < v'_3$.
    The following map gives a copy of $G$: $e_1,e_2,e_3$ are mapped to  
    $v_{10}v'_4,v'_3v'_7$ and $v'_6v'_{10}$, respectively, while 
    $f_1$ and $f_2$ are mapped to $vw$ and $v'_8v'_9$, respectively. 
    
    Case 2: $w = v'_3$. 
    The following map gives a copy of $G$: $e_1,e_2,e_3$ are mapped to 
    $v_7v'_1,vw$ and $v'_2v'_6$, respectively, while $f_1$ and $f_2$ 
    are mapped to $v_8v_9$ and $v'_4v'_5$, respectively. 
    
    Case 3: $w > v'_3$. 
    The following map gives a copy of $G$: $e_1,e_2,e_3$ are mapped to 
    $v_5v_{10}, v_9v'_1$ and $vw$, 
    respectively, while $f_1,f_2$ are mapped to $v_6v_7$ and $v'_2v'_3$, 
    respectively. 
    
    \begin{figure}[h]
    \centering
    \includegraphics[width=0.7\textwidth]{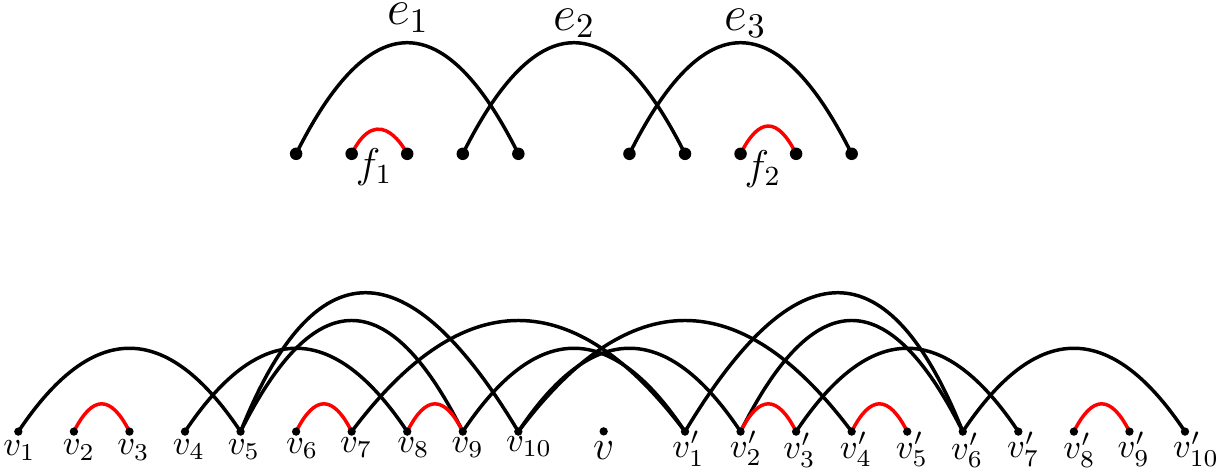}
    \caption{\label{fig:101} Graph $\Gamma_{\{1,0,1\}}$ and its witness graph $H$.}
    \end{figure}
  
    We are left to show that $H$ avoids $G$. Assume on the contrary that there 
    is a copy of $G$ in $H$. We first check if the edges $uw$ such that 
    $u<v<w$ are contained in a copy of $G$. These edges are $v_7v'_1, 
    v_9v'_1, v_{10}v'_2, v_{10}v'_4$. By symmetry it is enough to check only 
    $v_7v'_1$ and $v_9v'_1$.
    
    Assume first that $v_9v'_1$ is in a copy of $G$. Notice that there is only 
    one non-isolated vertex between $v_9$ and 
    $v'_1$. So, this edge must be a minedge in this copy as for all 
    non-minedges of $G$ there are at least two non-isolated vertices between 
    its endvertices. On the other hand, there is no edge in $H$ that strictly 
    covers $v_9v'_1$, so it cannot be a minedge in this copy, contradiction. 
    
    Assume second that $v_7v'_1$ is in a copy of $G$. As $v_7v'_1$ is not 
    strictly covered by any edge in $H$, it cannot be a minedge in this copy of 
    $G$, thus it is a link edge of $G$. Notice that 
    $v_8v_9$ is the only 
    candidate for the minedge contained in this link edge. Since there is no 
    vertex between $v_7$ and $v_8$, it cannot be $e_3$ in a copy of $G$. Assume 
    now that $v_8v_9$ is $e_1$ in this 
    copy of $G$. In that case the only possibility for $e_2$ is the edge 
    $v_{10}v'_4$. Then every edge that is a candidate for $e_3$ has as a right 
    vertex either 
    $v_6'$ or $v_7'$. However, since there is no edge between $v_4'$ and 
    $v'_7$, we cannot map $f_2$ to an edge of $H$, a contradiction.
    Assume finally that $v_7v'_1$ is $e_2$ in this copy of $G$. Similar to the 
    previous case, all edges that are candidates for $e_1$ have as a left 
    vertex either $v_4$ or $v_5$. However, since there is no edge between $v_4$ 
    and $v_7$, we cannot map $f_1$ to an edge of $H$, a contradiction. 
    
    So far we have shown that no edge $uw$ such that $u<v<w$ can be an edge of 
    a 
    copy of $G$ in $H$. As $G$ is not a separable graph, it follows that either 
    every vertex of a copy of $G$ is left from $v$ or every vertex is right 
    from $v$. By symmetry we can assume it is left from $v$. There are exactly 
    $10$ vertices left from $v$ in $H$ and also $|V(G)|=10$. 
    it means that there is a bijection $\phi$ between $V(G)$ and 
    $\{v_1,v_2,...,v_{10}\}$. In particular $e_3$ must be mapped to an edge 
    between $v_6$ and $v_{10}$ but there is no such edge in $H$, a 
    contradiction.
    
    Thus there is no copy of $G$ in $H$, which finishes the proof.
\end{proof}

\begin{figure}[h]
\centering
\includegraphics[width=0.7\textwidth]{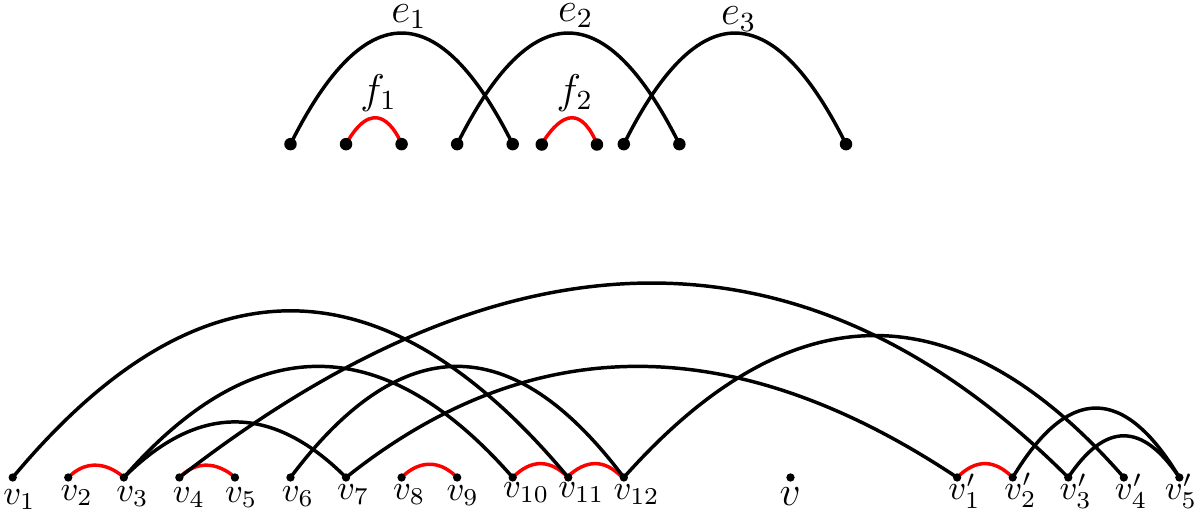}
\caption{\label{fig:110} Graph $\Gamma_{\{1,1,0\}}$ and its witness graph $H$.}
\end{figure}

\begin{claim}
    $sat_{<}(n,\Gamma_{\{1,1,0\}})=sat_{<}(n,\Gamma_{\{0,1,1\}})=O(1)$. 
\end{claim}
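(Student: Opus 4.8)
# Proof Proposal

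The plan is to follow the same strategy used successfully for $\Gamma_{\{0,1,0\}}$ and $\Gamma_{\{1,0,1\}}$: exhibit an explicit witness graph $H$ for $G = \Gamma_{\{1,1,0\}}$ (and symmetrically for $\Gamma_{\{0,1,1\}}$, which is just the mirror image, so it suffices to handle one of them), and then invoke Lemma~\ref{lem:witness}. Concretely, I would construct a small fixed graph $H$ containing a single isolated vertex $v$ (recall that for matchings the witness definition only requires one isolated vertex), and verify the two conditions of being a witness: (i) $H$ avoids $G$ as an ordered subgraph, and (ii) adding any edge incident to $v$ creates a copy of $G$. The graph $\Gamma_{\{1,1,0\}}$ has $3$ link edges $e_1,e_2,e_3$ (in order of left endpoint), a minedge $f_1$ strictly covered by $e_1$, and a minedge $f_2$ strictly covered by $e_2$; so $|V(G)| = 10$. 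As in the $\Gamma_{\{1,0,1\}}$ case, one expects a witness on roughly $20$--$22$ vertices assembled from two overlapping ``partial copies'' of $G$ sharing the isolated vertex region.

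For condition (ii) I would split on the position of the newly added edge's far endpoint relative to $v$. Since $\Gamma_{\{1,1,0\}}$ is \emph{not} symmetric (unlike $\Gamma_{\{1,0,1\}}$), both directions must be checked: for an edge $vw$ with $w$ to the right of $v$, I would designate a subgraph $H'$ of $H$ lying mostly to the right that already contains copies of $e_1,f_1,e_2,f_2$ (or $e_2,f_2,e_3$, depending on which role $vw$ should play) so that $vw$ completes $G$; symmetrically a subgraph $H''$ to the left handles $w < v$. The delicate range is when $w$ is immediately adjacent to $v$ on one side — there $vw$ must be forced to play the role of the minedge $f_1$ or $f_2$, and one needs a third dedicated subgraph $H^{*}$ (as in the $\Gamma_{\{0,1,0\}}$ proof, the edge set $\{a,b,c\}$ there) so that the extra edge plays a minedge role. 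The witness must be designed precisely so that these three cases (far right, far left, adjacent) are jointly covered, which is exactly where the construction is nontrivial.

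For condition (i), that $H$ avoids $G$, I would argue as in the previous claims: first identify which edges of $H$ could possibly play the role of a superedge of $G$ (here $e_1$ and $e_2$ are superedges, each requiring a prescribed number of vertices between its endpoints and strictly covering a minedge), use a counting argument on $|\{u \in V(G) : u > l(e_i)\}|$ and $|\{u : u < r(e_i)\}|$ to pin down the very few candidate edges in $H$, then check that no minedge of $G$ can be placed inside them because of insufficient vertices on one side. Separately, any edge $uw$ of $H$ with $u < v < w$ (straddling the isolated vertex) must be shown not to extend to a copy of $G$: since $G$ is not separable, a copy of $G$ using no straddling edge lies entirely on one side of $v$, and then a direct vertex-count plus an inspection that the required superedge structure is absent on each side rules it out.

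The main obstacle will be finding the explicit witness $H$ itself — the authors signal this explicitly (``the hard part was to find a witness''). Once $H$ is written down, the verification is finite case analysis of the type already carried out for $\Gamma_{\{0,1,0\}}$ and $\Gamma_{\{1,0,1\}}$, so I would present $H$ via a figure with named vertices/edges and then do the role-by-role bookkeeping as efficiently as possible, exploiting whatever internal structure of $H$ reduces the number of cases (e.g.\ re-using the same partial copies across multiple cases). The asymmetry of $\Gamma_{\{1,1,0\}}$ means there is no symmetry shortcut within a single case analysis, so the write-up will be somewhat longer than the $\Gamma_{\{1,0,1\}}$ one, but conceptually it is the same method.
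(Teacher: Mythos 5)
Your strategy exactly mirrors the paper's: reduce $\Gamma_{\{0,1,1\}}$ to $\Gamma_{\{1,1,0\}}$ by mirror symmetry, construct an explicit witness $H$ with a single isolated vertex $v$ (using the matching-relaxation in the definition of witness), invoke Lemma~\ref{lem:witness}, and verify (i) $H$ avoids $G$ and (ii) adding any edge incident to $v$ creates a copy of $G$ by a finite case analysis. You also correctly observe that the asymmetry of $\Gamma_{\{1,1,0\}}$ blocks any symmetry shortcut in part (ii); the paper indeed checks four separate ranges of $w$ with no reduction. For part (i) you propose pinning down candidate edges by counting vertices on each side of a superedge of $G$; the paper instead counts relative to the \emph{non}-superedge link $e_3$ (which has $7$ vertices before its left endpoint and $9$ before its right endpoint, isolating just three candidate edges of $H$), but this is an implementation detail within the same framework.

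However, there is a genuine gap: the witness $H$ is never constructed. You explicitly identify finding $H$ as the main obstacle, and that is in fact the entire nontrivial content of this claim --- once a candidate $H$ is written down, the remaining verification is mechanical bookkeeping, as you yourself note. Without an actual graph (the paper's witness has vertices $v_1,\dots,v_{12}$, the isolated $v$, and $v'_1,\dots,v'_5$, plus carefully positioned edges), neither condition (i) nor (ii) can be checked, so the claim is not established. What you have is a correct roadmap, matching the paper's approach, but not a proof.
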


\begin{proof}
	Let $G=\Gamma_{\{1,1,0\}}$, its link edges are denoted by $e_1,e_2,e_3$, in 
	this order and its minedge covered by $e_1$ (resp. $e_2$) is denoted by 
	$f_1$ (resp. $f_2$). Furthermore let $H$ be the graph drawn on Figure 
	\ref{fig:110} bottom with vertices named according to the figure. We claim 
	that $H$ is a witness for $G$, which by Lemma \ref{lem:witness} implies the 
	statement of the claim. 
	
	First we show that $H$ avoids $G$. Assume on the contrary that there is a copy $G_1$ of $G$ in $H$. In this copy $e_3$ is mapped to some edge $uu'$ of $H$. Notice that $7$ vertices precede $l(e_3)$ in $G$, while $9$ vertices precede $r(e_3)$. Thus $u>v_7$ and $u'>v_9$ and also we know that there is at least one vertex between $u$ and $u'$. The only edges that satisfy these properties are: $v_{12}v'_4, v'_2v'_5, v'_3v'_5$. 
	
    Case 1: $e_3$ is mapped to $v_{12}v'_4$. Then $e_2$ must be mapped to one of $\{v_7v'_1, v_4v'_3\}$. Assume that $e_2$ is mapped to $v_7v'_1$ and then $f_2$ must be mapped to $v_8v_9$ or $v_{10}v_{11}$, however neither is possible since then $e_1$ cannot be mapped to any of the edges. Now assume $e_2$ is mapped to $v_4v'_3$ which implies that $e_1$ must be mapped to $v_1v_{11}$ but that is again impossible since there is no room for mapping $f_{2}$ as there are no two vertices connected by an edge between $v_{11}$ and $v_{12}$ (in fact there is not even a vertex here). 
    
    Case 2: $e_3$ is mapped to $v'_2v'_5$. Then $e_2$ must be mapped to one of $\{v_4v'_3,v_{12}v'_4\}$. Similar to the previous case $e_2$ cannot be mapped to $v_4v'_3$, so $e_2$ must be mapped to $v_{12}v'_4$ but then again there is no room for mapping $f_2$ as there are no two vertices connected by an edge between $v_{11}$ and $v_{2'}$.
        
    Case 3: $e_3$ is mapped to $v'_2v'_5$. Then $e_2$ must be mapped to $v_{12}v'_4$, and consequently $f_2$ must be mapped to $v'_1v'_2$. Then the only vertex between $v_{12}$ and $v'_1$ is the isolated vertex $v$ so we cannot map $e_1$ to any edge.
    
    In all cases we arrived to a contradiction. Thus we are left to show that adding an arbitrary edge incident to $v$ in $H$ creates a copy of $H$.
    
    Connect $v$ and another vertex $w$ with a new edge. There are four different cases, and for each we can find a copy of $G$ in $H$ plus the new edge, finishing the proof.
    
    Case 1: $w < v_{10}$. 
    The following map gives a copy of $G$: $e_1,e_2,e_3$ are mapped to $wv,v_{12}v'_4$ and $v'_3v'_5$, respectively, while $f_1$ and $f_2$ are mapped to $v_{10}v_{11}$ and $v'_1v'_2$, respectively.
    
    Case 2: $w\in\{v_{10},v_{11}\}$. 
    The following map gives a copy of $G$: $e_1,e_2,e_3$ are mapped to  
    $v_3v_7,v_6v_{12}$ and $wv$, respectively, while $f_1$ and $f_2$ are mapped to $v_{4}v_{5}$ and $v_8v_9$, respectively.
    
    Case 3: $w\in\{v_{12}, v'_1\}$. 
    The following map gives a copy of $G$: $e_1,e_2,e_3$ are mapped to $v_1v_{11},v_4v'_{3}$ and $v'_2v'_5$, respectively, while $f_1$ and $f_2$ are mapped to $v_{2}v_{3}$ and $vw$, respectively.    
    
    Case 4: $w > v'_1$. 
    The following map gives a copy of $G$: $e_1,e_2,e_3$ are mapped to $v_3v_10,v_7v'_{1}$ and $vw$, respectively, while $f_1$ and $f_2$ are mapped to $v_{4}v_{5}$ and $v_{11}v_{12}$, respectively.
\end{proof}

\begin{figure}[h]
\centering
\includegraphics[width=0.7\textwidth]{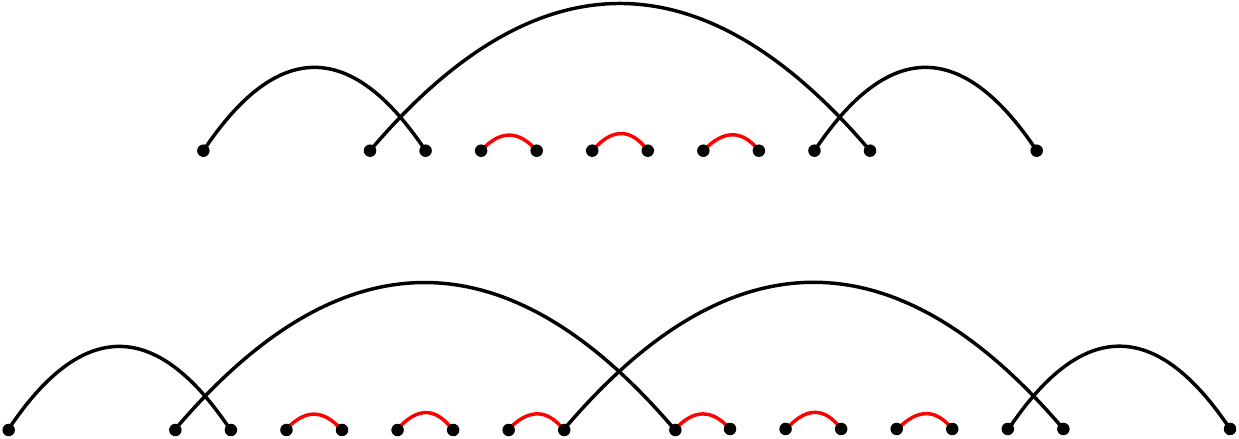}
\caption{\label{fig:030} Graph $\Gamma_{\{0,3,0\}}$ and $\HH{(\Gamma_{\{0,3,0\}})}$.}
\end{figure}

We will use the following construction to prove for a large class of graphs that they have bounded saturation function:

\begin{defi}\label{def:hg}
    Let $G$ be a graph on $n$ vertices. Assume the first and last vertex of $G$ both are incident to an isolated edge. Let $G'$ (resp. $G'')$ be the graph on $n-2$ vertices we get from $G$ by deleting the endvertices of the edge incident to the first (resp. last) vertex. Denote by $\HH(G)$ the graph on $2n-6$ vertices we get by placing a copy of $G'$ on the first $n-2$ vertices and a copy of $G''$ on the last $n-2$ vertices. Note that these two copies overlap on $2$ vertices. See Figure \ref{fig:030}.  
\end{defi}

\begin{defi}
    Let $A,G_1,G_2$ be three graphs on $n_0,n_1,n_2$ vertices, respectively. 
   We denote by $G_1 \mathbb{y} A\mathbb{x} G_2$ the family of graphs on $n_1+n_0+n_2-2$ vertices that can be obtained by choosing arbitrarily vertices $i,j$ such that both $i$ and $j$ are at least $n_1$ and at most $n_1+n_0-1$ and placing a copy of $G_1$ on the vertex set $\{1,\dots,n_1-1\}\cup\{i\}$, a copy of $G_2$ on the vertex set $\{j\}\cup\{n_1+n_0,\dots,n_1+n_0+n_2-2\}$ and a copy of $A$ on the vertex set $\{n_1,\dots,n_1+n_0-1\}$. See Figure \ref{fig:10G01}.
\end{defi}

\begin{figure}[h]
	\centering
	\includegraphics[width=0.6\textwidth]{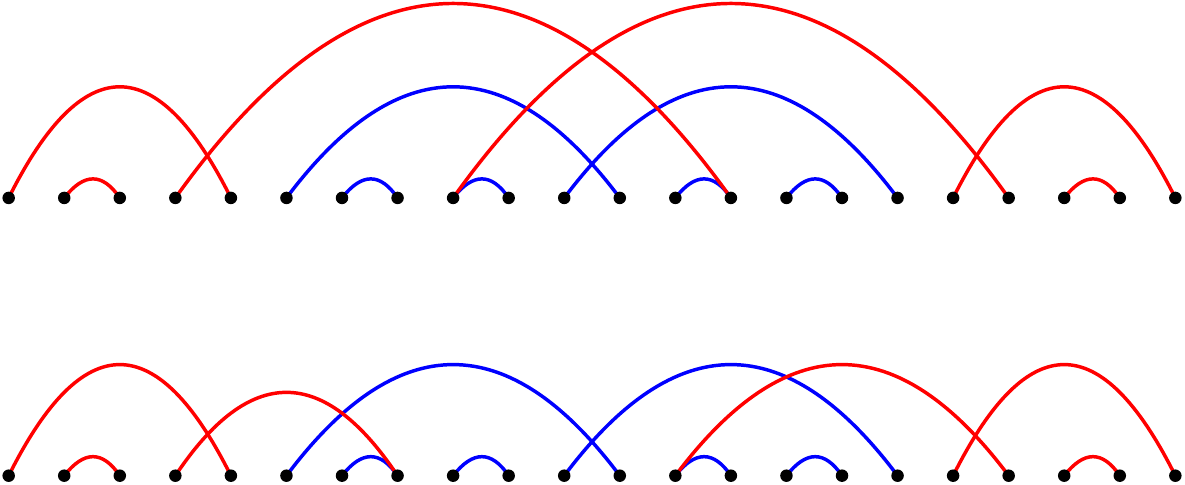}
	\caption{\label{fig:10G01} Two possible members of a family $\Gamma_{\{1,0\}} \mathbb{y} \Gamma_{\{2,2\}} \mathbb{x} \Gamma_{\{0,1\}}.$}
\end{figure}

\begin{lem}\label{lemma:g3ag3}
     Let $A$ be an ordered graph and $G\in\Gamma_3 \mathbb{y} A \mathbb{x} \Gamma_3$. If $G$ is not separable and has no degree zero or degree two vertices, then $\HH(G)$ avoids $G$.
\end{lem}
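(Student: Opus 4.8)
The plan is to argue by contradiction: suppose $\HH(G)$ contains a copy of $G$, call it $G^*$. The strategy is to locate the images of the two ``outermost'' link edges of $G$ (the first edge of the left $\Gamma_3$ and the last edge of the right $\Gamma_3$, equivalently the edges incident to the first and last vertex of $G$), use the hypothesis that $G$ has no degree-zero and no degree-two vertices to severely restrict where they can land inside $\HH(G) = G' \cup G''$, and then derive a contradiction with non-separability. Recall from Definition~\ref{def:hg} that $\HH(G)$ is built from $G'$ (which is $G$ with the first isolated edge deleted) on the first $n-2$ vertices and $G''$ (which is $G$ with the last isolated edge deleted) on the last $n-2$ vertices, overlapping in $2$ vertices; since $G \in \Gamma_3 \mathbb{y} A \mathbb{x} \Gamma_3$, the first and last vertices of $G$ are indeed endvertices of isolated (min)edges, so $\HH(G)$ is well-defined.

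First I would set up the degree bookkeeping. Every vertex of $G$ has degree $1$ or at least $3$ by hypothesis. The copy $G^*$ must realize each vertex of $G$ at a vertex of $\HH(G)$ of at least the same degree; in particular every degree-$\ge 3$ vertex of $G$ must be mapped to a vertex of $\HH(G)$ that has degree $\ge 3$ \emph{within} $\HH(G)$, and we must be able to find three suitable incident edges there. I would catalogue which vertices of $\HH(G)$ have degree $\ge 3$: these are essentially the vertices inside the copy of $A$ (inherited from $A$, where the two ``x/y'' vertices $i,j$ were attached) together with a bounded number near the two overlap vertices, while all vertices coming from the pendant min-edges of the two $\Gamma_3$'s have degree $1$. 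The key combinatorial point: $G$ contains two vertex-disjoint copies of ``$\Gamma_3$ with its link structure'', one on the far left and one on the far right, and each link edge of $\Gamma_3$ is a superedge strictly covering a min-structure. Following the technique of the proofs of Claims~\ref{claim:firstnonzero} and Lemma~\ref{lem:minsupdeg}, I would track the images of these superedges: a superedge of $G$ must map to an edge of $\HH(G)$ that strictly covers an edge, and between its endvertices there must be room for the nested part.

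The main work is then a location argument. Because $G$ is not separable, the copy $G^*$ cannot be split by any vertex of $\HH(G)$ — i.e., we cannot have all of $G^*$ lying in the first $n-3$ vertices or all in the last $n-3$ vertices, and more usefully, $G^*$ must ``straddle'' the overlap region, so some edge $uw$ of $G^*$ has $u$ in the $G'$-part only and $w$ in the $G''$-part only. I would show this straddling edge, together with the degree constraints, forces the image of one of the two outer $\Gamma_3$'s entirely into one side, say $G'$; but $G'$ is itself (isomorphic to) $G$ minus an isolated edge, so it is ``one min-edge short'' of containing $\Gamma_3 \mathbb{y} A \mathbb{x} \Gamma_3$, and a counting/structure argument (the far-left min-edge of $G$ has nowhere to go, exactly as in the endgame of the $\Gamma_{\{1,0,1\}}$ claim where $|V(G)| = 10$ matched the available vertices) gives the contradiction. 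Symmetrically the other side fails, so no copy of $G$ exists.

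The hard part will be the case analysis in the straddling step: pinning down exactly which edges of $\HH(G)$ can serve as the images of the link edges of the two outer $\Gamma_3$ blocks and of the edges of $A$, and checking that in every configuration either a superedge fails to strictly cover the needed nested edge, or a pendant min-edge of $G$ has no isolated edge available in $\HH(G)$ to map to, or a degree-$\ge 3$ vertex of $G$ lands on a degree-$1$ vertex of $\HH(G)$. I expect this to reduce — after using non-separability to force the straddling edge and the ``no degree-two vertex'' hypothesis to block the intermediate configurations — to finitely many subcases resolved by the same vertex-counting trick used in the $k=3$ claims above, but writing it cleanly is where the effort lies.
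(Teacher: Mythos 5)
Your overall strategy — argue by contradiction, use non-separability of $G$ to force a copy to lie inside one side of $\HH(G)$, and then count vertices — is the same as the paper's, and your endgame ("$G'$ is $G$ minus an isolated edge, so it is one min-edge short") is exactly the right vertex-counting observation. But the proposal stops short of the one idea that makes the proof go through cleanly, and as written there is a real gap.

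The paper's proof rests on a single structural observation about $\HH(G)$ that you do not identify: there are at most three edges of $\HH(G)$ (call them $e_2,e_3,e_4$, the link edges sitting near the two overlap vertices) such that $\HH(G)\setminus\{e_2,e_3\}$ is separable and $\HH(G)\setminus\{e_4\}$ is separable. With that in hand, everything collapses into two short cases. If the hypothetical copy $G_1$ uses neither $e_2$ nor $e_3$, it lives inside the separable graph $\HH(G)\setminus\{e_2,e_3\}$; since $G$ is not separable and has no isolated vertex, $G_1$ must lie entirely in one of the two parts, each of which has fewer than $|V(G)|$ vertices — contradiction. If $G_1$ does use, say, $e_2$, then $e_4$ shares a vertex with $e_2$, so the no-degree-two hypothesis forbids $e_4$ from being in $G_1$; now $G_1$ lives inside the separable graph $\HH(G)\setminus\{e_4\}$, and the same counting contradiction applies. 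That is the entire argument.

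Your proposal instead sets up degree bookkeeping (mapping degree-$\ge 3$ vertices to degree-$\ge 3$ vertices), cataloguing high-degree vertices of $\HH(G)$, and tracking images of superedges, none of which is used by the paper. More importantly, the central ``location argument'' is where the real work is, and it is left as an acknowledged open case analysis (``writing it cleanly is where the effort lies''). The step ``Because $G$ is not separable, the copy $G^*$ cannot be split by any vertex of $\HH(G)$ ... so some edge of $G^*$ straddles the overlap'' conflates two things: ruling out $G^*$ lying wholly in one half is a pure vertex count (each side has only $n-2 < n$ vertices), while non-separability is used differently — it says that if $G^*$ avoids a small set of bridging edges, then $G^*$ is a subgraph of a separable graph and therefore sits entirely in one of its parts. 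Until you pin down exactly which edges of $\HH(G)$ are the only ones preventing separability, the ``finitely many subcases'' you anticipate are unbounded in principle and the proof is not complete. Identifying $e_2,e_3,e_4$ is the missing idea; once you have it, almost all of your proposed machinery becomes unnecessary.
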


\begin{proof}
   Assume on the contrary that there is a $G_1$ copy of $G$ in $\HH(G)$. See Figure \ref{fig:3A3} for an illustration where certain edges of $\HH(G)$ are labeled.
   
   Assume first that neither $e_2$ nor $e_3$ is contained in $G_1$. As $\HH(G) \setminus \{e_2,e_3\}$ is separable but $G$ is not, $G$ has to be fully contained in the subgraph induced by either the first half or the second half of the vertices. However, both subgraphs have less than $|V(G)|$ vertices, a contradiction.
   
   Assume second that at least one of $e_2$ and $e_3$, wlog. $e_2$, is contained in $G_1$. In this case, as $G$ is a graph with no degree-two vertices, $e_4$ cannot be in $G_1$. As $\HH(G) \setminus \{e_4\}$ is separable but $G$ is not, $G_1$ has to be fully contained in the subgraph induced by the set of vertices either preceding and including $r(e_3)$ or succeeding $r(e_3)$. However, both subgraphs have less than $|V(G)|$ vertices, a contradiction.     
\end{proof}

\begin{figure}[h]
\centering
\includegraphics[width=0.75\textwidth]{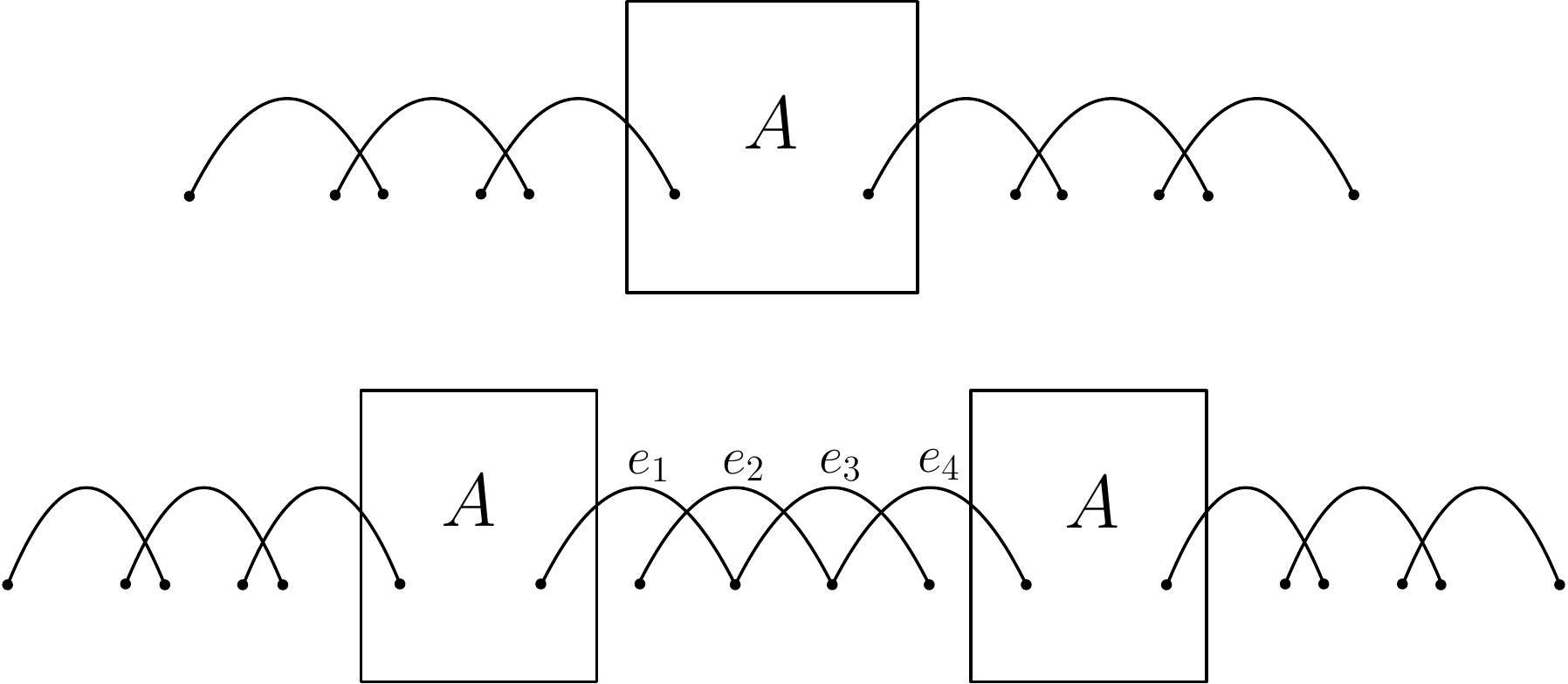}
\caption{\label{fig:3A3} Graph $G \in \Gamma_3 \mathbb{y} A \mathbb{x} \Gamma_3$ and $\HH{(G)}$.}
\end{figure}

\begin{thm}
    Let $A$ be an ordered graph and $$G\in\Gamma_{\{0,0,1,0\}} \mathbb{y} A \mathbb{x} \Gamma_{\{0,1,0,0\}} \text{ or } G\in\Gamma_{\{0,0,1,0\}} \mathbb{y} A \mathbb{x} \Gamma_{\{0,0,0,0\}}.$$ If $G$ is not separable and has no degree zero or degree two vertices, then $sat_{<}(n,G)=O(1)$.
\end{thm}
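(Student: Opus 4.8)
The plan is to derive both cases from Lemma~\ref{lemma:g3ag3} together with Lemma~\ref{lem:witness}. The first step is to recognise every $G$ in either family as a member of $\Gamma_3 \mathbb{y} A' \mathbb{x} \Gamma_3$ for a suitable ordered graph $A'$, so that Lemma~\ref{lemma:g3ag3} applies. Writing $G \in \Gamma_{\{0,0,1,0\}} \mathbb{y} A \mathbb{x} \Gamma_{\{0,1,0,0\}}$ out on its vertex set, the left factor $\Gamma_{\{0,0,1,0\}}$ sits inside $G$ with its last vertex pushed into the middle part, and its first three link edges, restricted to six of its ten vertices, form after relabelling a copy of $\Gamma_3$ whose last vertex lies in the middle part, while its two remaining edges — the minedge inside link~$3$ and the long fourth link edge — lie entirely inside the middle part. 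Symmetrically, the right factor $\Gamma_{\{0,1,0,0\}}$ contributes a copy of $\Gamma_3$ on its last six vertices with its first vertex lying in the middle part, the two other edges again inside the middle part; the case of the right factor $\Gamma_{\{0,0,0,0\}} = \Gamma_4$ is identical with one fewer leftover edge. Taking $A'$ to be the subgraph of $G$ induced on the middle part — that is, $A$ together with these leftover edges — exhibits $G \in \Gamma_3 \mathbb{y} A' \mathbb{x} \Gamma_3$. Since $G$ is not separable and has no vertex of degree $0$ or $2$, Lemma~\ref{lemma:g3ag3} then gives that $\HH(G)$ avoids $G$; moreover the first and the last vertex of $G$ are each incident to an isolated edge, so $\HH(G)$ is well defined.

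Since $G$ has no isolated vertex, by Lemma~\ref{lem:witness} it is now enough to construct a witness of $G$, i.e.\ an ordered graph $H$ that avoids $G$, has two consecutive isolated vertices $v_1 < v_2$, and such that adding an arbitrary edge incident to $v_1$ or $v_2$ creates a copy of $G$. I would build $H$ from $\HH(G)$ by modifying it near the ``seam'' where the two overlapping truncated copies $G'$ and $G''$ of $G$ inside $\HH(G)$ meet: insert $v_1, v_2$ there and attach a bounded-size central gadget assembled from copies of $\Gamma_3$ and the minedge of the left factor (and, in the first family, also the minedge of $\Gamma_{\{0,1,0,0\}}$) — the same flavour of gadget as in the explicit witnesses behind Theorem~\ref{thm:gamma3}. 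Working at the seam is natural because $\HH(G)$'s left half is $G$ with (the endvertices of) its first link edge deleted and its right half is $G$ with its last link edge deleted, so an edge $e$ added at $v_1$ or $v_2$ can be matched to a link edge of $G$ spanning the middle part when $e$ reaches far enough to one side, with the appropriate half of $\HH(G)$ supplying the remaining vertices, and to a minedge, via the central gadget, when $e$ is short.

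It remains to verify the two defining properties of a witness. First, $H$ avoids $G$: the part $\HH(G)$ avoids $G$ by Lemma~\ref{lemma:g3ag3}; the vertices $v_1, v_2$ cannot belong to any copy of $G$ because $G$ has no vertex of degree $0$; and a copy of $G$ using the central gadget but not $v_1, v_2$ would split into a part entirely left of the gadget and a part entirely right of it — the gadget being too short to hold $G$, by a vertex count — contradicting the non-separability of $G$; this is the same shape of argument as in the proof of Lemma~\ref{lemma:g3ag3}. Second, the completion property follows by a case analysis on the second endpoint $w$ of the added edge $e = v_i w$: when $w$ is far to the left, $e$ plays the role of the relevant link edge and the right half of $\HH(G)$, glued along the seam structure, completes a copy of $G$; symmetrically when $w$ is far to the right; and when $w$ is close, $e$ plays the role of a minedge and the central gadget supplies the rest — exactly the three-case pattern of the proofs of the claims implying Theorem~\ref{thm:gamma3}. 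With the witness constructed, Lemma~\ref{lem:witness} gives $sat_{<}(n,G) = O(1)$.

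I expect the genuine obstacle to be the explicit design of the central gadget and the exact placement of $v_1, v_2$ so that the short-edge case of the completion property goes through while $H$ still avoids $G$; this is the delicate ``construction'' point the paper repeatedly flags as unexpectedly hard, and it is precisely where the restriction to the two allowed right factors $\Gamma_{\{0,1,0,0\}}$ and $\Gamma_{\{0,0,0,0\}}$ — rather than an arbitrary four-link factor, for which Claims~\ref{claim:allnonzero} and~\ref{claim:firstnonzero} already force a linear lower bound — and the non-separability of $G$ are really used. By contrast, once Lemma~\ref{lemma:g3ag3} is in hand, verifying that $H$ avoids $G$ is essentially routine.
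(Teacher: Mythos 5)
You correctly identify the two key lemmas to invoke (Lemma~\ref{lemma:g3ag3} to see that $\HH(G)$ avoids $G$, and Lemma~\ref{lem:witness} to reduce the theorem to producing a witness), and your observation that each $G$ in either family belongs to $\Gamma_3 \mathbb{y} A' \mathbb{x} \Gamma_3$ for suitable $A'$ is exactly what the paper uses. So the scaffolding matches the paper's approach.

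However, there is a genuine gap, and you name it yourself: you never actually construct the witness $H$. You say you would ``build $H$ from $\HH(G)$ by modifying it near the seam\dots\ and attach a bounded-size central gadget assembled from copies of $\Gamma_3$ and the minedge of the left factor,'' but this is a description of the \emph{shape} of a construction, not a construction. The paper's proof consists almost entirely of an explicit graph $H$ (the one drawn in Figure~\ref{fig:4A4}, obtained from $\HH(G)$ by adding three specific edges $a,b,c$ spanning the seam together with two isolated vertices), followed by a careful case analysis: first that no copy of $G$ in $H$ can use any of $a,b,c$ (which, combined with Lemma~\ref{lemma:g3ag3}, shows $H$ avoids $G$), and second that for every position of $w$ the edge $vw$ completes a copy of $G$ using a suitable combination of $a,b,c,d$ and one of the two halves of $\HH(G)$. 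None of this content is present in your proposal, and the avoidance argument in particular is not routine: the bulk of the paper's proof is exactly the case analysis showing that the added seam edges cannot smuggle in a copy of $G$ (Cases~1--3.3), which requires the specific counts of vertices around each of $a,b,c$. So while the route you sketch is the paper's route, the sketch stops short of the hard part and hence does not constitute a proof.
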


\begin{proof}
	We first prove the case when $G\in\Gamma_{\{0,0,1,0\}} \mathbb{y} A \mathbb{x} \Gamma_{\{0,1,0,0\}}$.
	
	Let $H$ be the graph on Figure \ref{fig:4A4} bottom, with some of the edges labeled. We prove that $H$ is a witness for $G$ which implies  $sat_{<}(n,G)=O(1)$. 
	
	First we show that $H$ avoids $G$. Assume on the contrary that $H$ contains a copy $G_1$ of $G$. Notice first that $\Gamma_{\{0,0,1,0\}} \mathbb{y} A \mathbb{x} \Gamma_{\{0,1,0,0\}}$ is a subfamily of $\Gamma_3 \mathbb{y} A' \mathbb{x} \Gamma_3$ for an appropriate choice of $A'$ and that we get $H$ from $\HH{(G)}$ by adding the edge set  $\{a,b,c\}$ and the isolated vertices $v$ and $v'$. Lemma \ref{lemma:g3ag3} implies that $\HH{(G)}$ avoids $G$ thus $G_1$ must contain at least one edge from the edge set $\{a,b,c\}$.
    
    Case 1: $b$ is not in $G_1$ and $a$ is in $G_1$.
    As $G$ has no vertex of degree two, $e_2$ cannot be in $G_1$.
    
    Case 1.1: $e_1$ is in $G_1$. In this case $e_3$ cannot be in $G_1$ since $G$ has no vertex of degree two. Thus $e_2$ and $e_3$ are not in $G_1$ and using that $G$ is not separable, $G_1$ is completely to the left from $v$, but there are less than $|V(G)|$ such vertices, a contradiction. 
    
    Case 1.2: $e_1$ is not in $G_1$. In this case $b,e_1,e_2$ are not in $G_1$ and using that $G$ is not separable, either every vertex of $G_1$ is completely to the left from $l(e_3)$ or it is completely in the rest of the vertices, but there are less than $|V(G)|$ vertices on each side, a contradiction.
    
    Case 2: $b$ is not in $G_1$ and $c$ is in $G_1$. Notice that in Case 1 we did not use where exactly the left endvertex of $a$ and right endvertex of $c$ lies, thus a symmetrical argument works also in Case 2.
    
    \begin{figure}[h]
    \centering
    \includegraphics[width=0.8\textwidth]{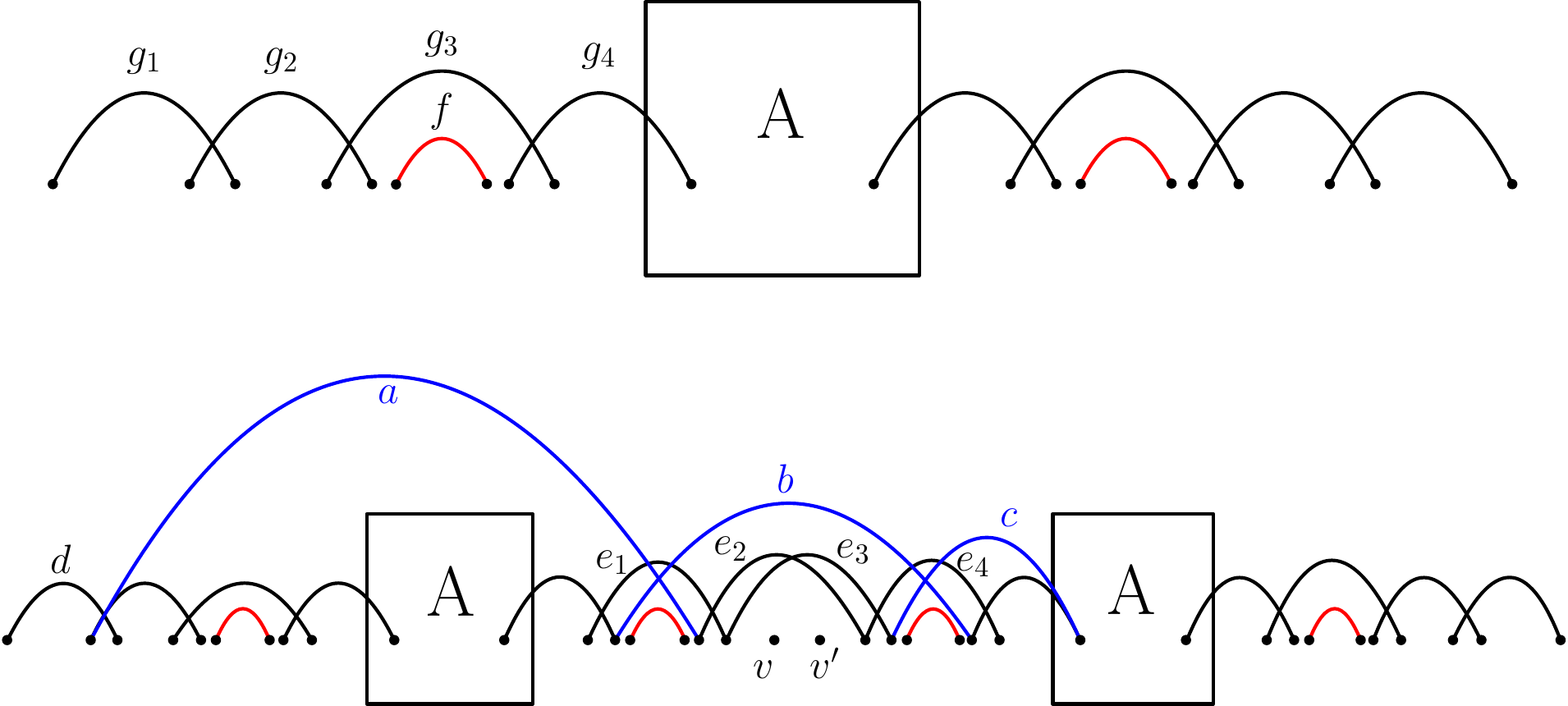}
    \caption{\label{fig:4A4} Graph $G \in \Gamma_{\{0,0,1,0\}} \mathbb{y} A \mathbb{x}  \Gamma_{\{0,1,0,0\}}$ and its witness graph $H$.}
    \end{figure}

    Case 3: $b$ is in $G_1$. Then the other two edges incident to the endvertices of $b$ cannot be in $G_1$ as $G$ has no vertex of degree two.

    Case 3.1: $a$ and $c$ are not in $G_1$. Then using that $G$ is not separable, $G_1$ has to be in the closed interval defined by $l(e_1)$ and $r(e_4)$. However, there are less than $|V(G)|$ vertices in this interval (as $|v \in V(H): l(e_1) \leq v \leq r(e_4)| = 13 < 19 = |V(\Gamma_{\{0,0,1,0\}})|+|V(\Gamma_{\{0,1,0,0\}})|-1 \leq V(G)$), a contradiction.
    
    Case 3.2: $a$ is in $G_1$. Since $l(a)$ is the second smallest vertex in $H$,  only $g_1$ or $g_2$ can be mapped to $a$, in which case $b$ must play the role of $g_2$ or $g_3$, respectively. Since the number of vertices in $H$ greater than $r(b)$ is $|V(G)|-8$ and the number of vertices in $G$ greater than $r(g_2)$ is $|V(G)|-5$, it follows that $b$ cannot play the role of $g_2$. Thus $g_2$ is mapped to $a$ and $g_3$ is mapped to $b$. In this case $g_4$ is mapped to $e_4$ or $c$. In both cases there is no room to map the endvertices of $f$, a contradiction.
    
    Case 3.3: $a$ is not in $G_1$ and $c$ is in $G_1$. As $G$ is not separable, $G_1$ lies on the right side of (and including) the vertex $l(e_1)$. As $l(b)$ is the second vertex on this side, $b$ must play the role of $g_1$ or $g_2$ in $G_1$. Similar to Case 3.2, since the number of vertices in $H$ greater than $r(b)$ is $|V(G)|-8$ and the number of vertices in $G$ greater than $r(g_2)$ is $|V(G)|-5$, this is also impossible.

    It remains to show that connecting $v$ with an arbitrary vertex $w \in V(H)$ creates a copy of $G$. Let $G'$ and $G''$ be the subgraphs whose union is $\HH(G)$ as defined in Definition \ref{def:hg}. First, if $w<l(e_3)$ then adding $wv$ to $H\setminus\{a,b,c\}$ creates a copy of $G$ in which the first edge of $G$, $g_1$, is mapped to $wv$ and the rest is mapped to $G''$. Similarly, if $w>r(e_2)$ then adding $vw$ to $H\setminus\{a,b,c\}$ creates a copy of $G$ in which the last edge of $G$ is mapped to $vw$ and the rest is mapped to $G'$. Finally, if $w\in\{l(e_3),v',r(e_2)\}$ then the following mapping gives a copy of $G$ in $H$: $f$ is mapped to the edge connecting $v$ and $w$, further, $g_1,g_2,g_3,g_4$ are mapped to $d, a, b, c$, respectively, and the remaining edges are mapped to a subgraph of $G''$ in the same way as in the case $w<l(e_3)$. 
    
   	The case when $G\in\Gamma_{\{0,0,1,0\}} \mathbb{y} A \mathbb{x} \Gamma_{\{0,0,0,0\}}$ (note that $\Gamma_{\{0,0,0,0\}}=\Gamma_4$) can be proved practically verbatim, except that the witness is the graph on Figure \ref{fig:4A4} bottom minus the second (the one covered by $e_1$) and fourth (the one right to the second copy of $A$) red minedge.    
\end{proof}

\begin{cor}
    Let $A$ be an ordered graph and $$M\in\Gamma_{\{0,0,1,0\}} \mathbb{y} A \mathbb{x} \Gamma_{\{0,1,0,0\}} \text{ or } M\in\Gamma_{\{0,0,1,0\}} \mathbb{y} A \mathbb{x} \Gamma_{\{0,0,0,0\}}.$$ If $M$ is an ordered matching and it is not separable, then $sat_{<}(n,G)=O(1)$.
\end{cor}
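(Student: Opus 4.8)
The plan is to derive the corollary directly from the preceding theorem, since an ordered matching is just a particularly well-behaved instance of the graphs covered there. Recall that the theorem asserts $sat_{<}(n,G)=O(1)$ whenever $G$ lies in one of the two families $\Gamma_{\{0,0,1,0\}} \mathbb{y} A \mathbb{x} \Gamma_{\{0,1,0,0\}}$ or $\Gamma_{\{0,0,1,0\}} \mathbb{y} A \mathbb{x} \Gamma_{\{0,0,0,0\}}$, provided that $G$ is not separable and has no vertex of degree zero or degree two. So the only thing to check is that an ordered matching $M$ in one of these families satisfies the two side conditions.

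First I would invoke the paper's definition of a matching: every vertex has degree exactly one. Consequently an ordered matching $M$ automatically has no vertex of degree zero and no vertex of degree two, so the degree hypothesis of the theorem is satisfied for free. The non-separability of $M$ is assumed in the corollary. Applying the theorem to $G=M$ then yields $sat_{<}(n,M)=O(1)$, which is the assertion (the symbol $G$ in the displayed conclusion should read $M$).

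There is essentially no obstacle here: all the real work — constructing the witness graph $H$ (Figure \ref{fig:4A4}), showing that it avoids $M$ via Lemma \ref{lemma:g3ag3} together with the short case analysis, and verifying that adding any edge at the isolated vertex $v$ creates a copy of $M$ — is already carried out in the proof of the theorem, and invoking Lemma \ref{lem:witness} converts the existence of such a witness into the bound $O(1)$. The one point perhaps worth a sentence in the write-up is that the hypothesis ``$M$ is an ordered matching'' already forces $A$ to be an ordered matching and the gluing vertices $i,j$ to be chosen so that no degree-two vertex is created at the overlap; hence membership of $M$ in the family is genuinely compatible with being a matching, and no extra verification beyond citing the theorem is needed.
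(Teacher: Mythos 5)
Your proof is correct and is exactly the (implicit) derivation the paper intends: since the paper defines a matching as a graph in which every vertex has degree one, the hypothesis that $M$ is a matching immediately rules out degree-zero and degree-two vertices, and together with the assumed non-separability this puts $M$ squarely under the hypotheses of the preceding theorem. Nothing more is needed.
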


\begin{cor}
    $$sat_{<}(n,\Gamma_{\{0,0,1,m_1,m_2,...,m_k,1,0,0\}})=O(1) \text{ and } sat_{<}(n,\Gamma_{\{0,0,1,m_1,m_2,...,m_k,0,0,0\}})=O(1),$$ for $k \geq 2$ and $m_i \geq 0$ for all $1 \leq i \leq k$.
\end{cor}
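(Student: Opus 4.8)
The plan is to deduce both statements from the preceding theorem (equivalently, from its corollary for ordered matchings). Since a linked matching is a matching, it has no vertex of degree zero or two, so for $G=\Gamma_{\{0,0,1,m_1,\dots,m_k,1,0,0\}}$ it is enough to check that $G$ is not separable and that $G$ lies in the family $\Gamma_{\{0,0,1,0\}} \mathbb{y} A \mathbb{x} \Gamma_{\{0,1,0,0\}}$ for some ordered graph $A$; the graph $\Gamma_{\{0,0,1,m_1,\dots,m_k,0,0,0\}}$ is handled the same way with $\Gamma_{\{0,0,0,0\}}=\Gamma_4$ in place of $\Gamma_{\{0,1,0,0\}}$.

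For the membership, write $\ell_1,\dots,\ell_{k+6}$ for the link edges of $G$ in order. The copy of $\Gamma_{\{0,0,1,0\}}$ will realize $\ell_1,\ell_2,\ell_3$ and the minedge inside $\ell_3$, and its fourth link edge is the edge that gets stretched into the middle block, with its right endpoint (the vertex $i$ of the operation) playing the role of $\ell_4$; symmetrically, the copy of $\Gamma_{\{0,1,0,0\}}$ realizes $\ell_{k+4},\ell_{k+5},\ell_{k+6}$ and the minedge inside $\ell_{k+4}$, its first link edge being stretched, with left endpoint the vertex $j$, to play the role of $\ell_{k+3}$. The middle graph $A$ must then supply exactly the remaining edges of $G$: the frames of $\ell_5,\dots,\ell_{k+2}$ (which form a copy of $\Gamma_{\{m_2,\dots,m_{k-1}\}}$, empty when $k=2$), the $m_1$ minedges inside $\ell_4$ and the $m_k$ minedges inside $\ell_{k+3}$, and the minedges inside $\ell_5,\dots,\ell_{k+2}$; moreover $A$ must contain the vertices $i$ and $j$ as isolated vertices, because in $G$ they have degree one and that degree is already provided by the two stretched edges. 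Concretely one takes $A$ to be the left-to-right concatenation of $L_{m_1}$, the copy of $\Gamma_{\{m_2,\dots,m_{k-1}\}}$, and $L_{m_k}$, with $i$ and $j$ inserted as isolated vertices at the appropriate positions, and with $i$ equal to the right endpoint of the fourth link edge of $\Gamma_{\{0,0,1,0\}}$ and $j$ equal to the left endpoint of the first link edge of $\Gamma_{\{0,1,0,0\}}$; comparing vertex sets and edge sets then shows that the union of the three placed copies is precisely $G$. The hypothesis $k\ge 2$ is exactly what makes the vertex ranges of the two outer copies disjoint — for $k=1$ their link edges would overlap at $\ell_4$.

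It remains to observe that $G$ is not separable: in the underlying $\Gamma_{k+6}$ (and $k+6\ge 8$) any two consecutive link edges interleave, so in any partition witnessing separability all link edges would lie on one side, while each minedge is strictly covered by a link edge and hence cannot lie entirely on one side of all link edges. With this, the theorem (or the matching corollary) gives $sat_{<}(n,G)=O(1)$ in both families. The only genuinely technical point is the bookkeeping in the second paragraph: one has to pin down the exact position of the two ``border'' matchings $L_{m_1},L_{m_k}$ and of the isolated vertices $i,j$ relative to the stretched link edges $\ell_4$ and $\ell_{k+3}$, so that the overlapping union of the three copies is literally the prescribed linked matching. This is routine but slightly fiddly; conceptually it is just the splitting of the sequence $0,0,1,m_1,\dots,m_k,1,0,0$ into the prefix $(0,0,1,0)$, a middle part carried by $A$, and the suffix $(0,1,0,0)$.
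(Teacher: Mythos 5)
Your proof is correct and is exactly the natural verification the paper has in mind (the paper states this corollary without proof, as a direct consequence of the preceding theorem/corollary): you check non-separability via the interleaving of consecutive link edges, and you exhibit the decomposition into the families $\Gamma_{\{0,0,1,0\}} \mathbb{y} A \mathbb{x} \Gamma_{\{0,1,0,0\}}$ resp. $\Gamma_{\{0,0,1,0\}} \mathbb{y} A \mathbb{x} \Gamma_{\{0,0,0,0\}}$, with the middle graph $A$ carrying the $L_{m_1}$, $L_{m_k}$ blocks and the inner $\Gamma_{\{m_2,\dots,m_{k-1}\}}$ structure plus the two isolated vertices $i=r(\ell_4)$ and $j=l(\ell_{k+3})$. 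The only small imprecision is purely expository: $A$ is not literally a left-to-right concatenation of $L_{m_1}$, $\Gamma_{\{m_2,\dots,m_{k-1}\}}$, $L_{m_k}$ with $i,j$ appended at the ends, since $i$ sits between $l(\ell_5)$ and the minedges of $\ell_5$, and $j$ between the minedges of $\ell_{k+2}$ and $r(\ell_{k+2})$ (so both land inside the middle piece); your phrase ``at the appropriate positions'' covers this, and your explanation of why $k\ge 2$ is needed is also accurate.
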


\section{Saturation of cyclically ordered graphs}\label{sec:cyclic}
	
In this section we consider cyclically ordered graphs. We prove dichotomy for their saturation function as well along with infinitely many examples for both cases (bounded and linear).

Given a graph $C$ on a cyclically ordered vertex set, for vertices $u,v,x$ we write $u<x<v$ if starting with $u$ in clockwise direction we first meet $x$ then $v$. We define the open interval $I_{u,v} = \{x \in V(C): u < x < v\}$. From now on every graph we consider is cyclically ordered even if we don't say it explicitly.

\begin{thm}\label{thm:cyclicdich}
	Given a cyclically ordered graph $C$, we either have $sat_{\circlearrowright}(n,C)=O(1)$ or $sat_{\circlearrowright}(n,C)=\Theta(n)$. 
\end{thm}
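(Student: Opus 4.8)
The plan is to follow the template of the proof of Theorem~\ref{thm:ordereddich}, adapting each half to the cyclic setting. For the dichotomy between $O(1)$ and $\Omega(n)$, let $H_n$ be a cyclically ordered host saturating $C$ with $sat_{\circlearrowright}(n,C)$ edges. If $C$ has no isolated vertex and some $H_{n_0}$ contains two cyclically consecutive isolated vertices, then blowing one of them up into arbitrarily many consecutive isolated vertices produces saturated hosts of every size $n>n_0$ with the same number of edges, so $sat_{\circlearrowright}(n,C)=O(1)$; if $C$ has isolated vertices one instead asks for $|V(C)|$ consecutive isolated vertices and concludes the same. In the complementary situation the minimal hosts never contain two (resp.\ $|V(C)|$ many) consecutive isolated vertices, whence $sat_{\circlearrowright}(n,C)=\Omega(n)$. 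This part is essentially verbatim the argument of Theorem~\ref{thm:ordereddich}, so the whole content of the theorem is the universal upper bound $sat_{\circlearrowright}(n,C)=O(n)$ (the single-edge case being trivial).

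For that bound I would build an explicit host mimicking the one used in Theorem~\ref{thm:ordereddich}. Choose an edge $e=uv$ of $C$ that is \emph{shortest}, i.e.\ minimizes the number of internal vertices on its shorter bounding arc $A$; a short sub-arc argument shows $A$ contains no edge of $C$ other than $e$, so if $A$ has $b$ internal vertices and the other arc has $c\ge b$ internal vertices, then the $b$ internal vertices of $A$ form an independent set all of whose neighbours lie among the $c$ ``outer'' vertices, and moreover (by minimality of $e$) every edge of $C$ has at least $b$ internal vertices on each of its two arcs, which forces outer edges to span at least $b+1$ outer positions. Now take $H_n$ on the cyclic vertex set $[n]$: a block $B$ of $c$ consecutive vertices carrying a fixed ordered copy of the induced subgraph of $C$ on the $c$ outer vertices and joined to every other vertex, together with every ``short'' edge, i.e.\ every pair whose shorter arc has at most $b-1$ internal vertices. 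Then $H_n$ has $O(cn+bn)=O(n)$ edges, and exactly as in the ordered proof one checks that adding any new edge $xy$ creates a copy of $C$: such an edge must have both endpoints outside $B$ and its shorter arc must have at least $b$ internal vertices, so one maps $e$ to $xy$, the $b$ internal vertices of $A$ into that short arc, and the $c$ outer vertices onto $B$, at which point all needed edges of $C$ are present.

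The hard part, and the place where the cyclic case genuinely differs from the ordered one, is to show that $H_n$ still avoids $C$: in the ordered host the edge $uv$ could never be mapped ``into'' a dense end-block because $u$ has a whole tail of vertices on one side and $v$ a whole tail on the other, while the single cyclic block $B$ can a priori be ``wrapped around'' by a copy of $C$. I would analyse the image $\{P,Q\}$ of $e$ by cases: if $P,Q$ both lie in the sparse part, then both bounding arcs of $e$ would have to fit into arcs with at most $b-1<c$ internal vertices, contradicting $b\le c$; the remaining case, with $P$ or $Q$ in $B$, must be excluded using the emptiness of $A$ together with the ``span at least $b+1$'' property of outer edges, which prevents several of them from becoming short edges. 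The main obstacle I anticipate is that this last case does not always close for the bare ``block plus short edges'' host — if the outer subgraph has a run of $b+2$ pairwise non-adjacent consecutive vertices, that run can be absorbed into the sparse part and $C$ reappears — so the construction must be refined, e.g.\ by choosing $e$ (or the arc $A$) so as to also rule out such runs, or by splitting the outer part across two dense blocks separated by a second sparse window, mirroring the two end-blocks of the ordered host. Pinning down the right refined construction and its avoidance proof is what the theorem really requires.
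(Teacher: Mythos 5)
Your reduction to the $O(n)$ upper bound, via blowing up two consecutive isolated vertices (or $|V(C)|$ of them), is exactly what the paper does and is fine. But the upper bound itself is where your proposal has a genuine gap, and you acknowledge it yourself: your ``shortest cyclic edge plus dense block plus short edges'' host does not avoid $C$ in general, and the last paragraph of your writeup is an admission that you do not yet know how to repair it. That is not a cosmetic issue; it is the content of the theorem. Moreover, your ``short sub-arc argument'' that the shorter arc of a shortest edge contains no other edge of $C$ only controls edges \emph{within} that arc, not how the outer subgraph of $C$ interacts with the dense block once it is wrapped cyclically, which is precisely the failure mode you describe.

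The paper does not try to build an explicit saturating host at all, and this is the key idea you are missing. Let $k=|V(C)|$ and let $s$ be the minimum size of a cyclic interval $I$ of $V(C)$ that meets every edge of $C$ (a vertex cover consisting of consecutive vertices). Take $H$ on $[n]$ to be a clique on an interval $J$ of $s-1$ vertices joined completely to all other vertices, and no other edges. Then $H$ avoids $C$ by minimality of $s$: any copy of $C$ in $H$ would have all its edges incident to $J$, so $J$ would be an interval of $s-1$ consecutive vertices hitting every edge of $C$. Now \emph{greedily} add edges to $H$ until it becomes $C$-saturated; call the result $H'$. One then shows that in $H'$ every vertex $v\notin J$ has degree at most $2k-s-3$, by a pigeonhole argument: if $\deg(v)\ge 2k-s-2$, then at least $\lceil (2k-s-2-(s-1))/2\rceil=k-s$ of $v$'s neighbours lie in one of the two cyclic intervals into which $V(H')\setminus(J\cup\{v\})$ splits, and mapping $I$ onto $J\cup\{v\}$ (with the boundary vertex of $I$ going to $v$) and $V(C)\setminus I$ onto those $k-s$ neighbours gives a cyclic-order-preserving copy of $C$ inside $H'$, which is impossible since $H'$ still avoids $C$. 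Hence $|E(H')|\le O(sn+kn)=O(n)$. The strategic shift — bound the saturation number by bounding the maximum degree in \emph{any} greedy completion of a cheap avoiding host, rather than exhibiting a specific saturated host — is exactly what lets one avoid the wrap-around case analysis that your construction gets stuck on, and you would need to find this (or some comparable) idea to finish the proof.
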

\begin{proof}
The first part of the proof is almost identical to that of Theorem \ref{thm:ordereddich}. Let $H_n$ be the host graph saturating $C$ on $n$ vertices with $sat_{\ca}(n,G)$ edges. If $C$ has no isolated vertices then if there exists an $n_0$ such that $H_{n_0}$ contains two adjacent isolated vertices, then we can multiply these vertices to get host graphs of size $n>n_0$ with the same number of edges, showing that $sat_{\ca}(n,C)\le sat_{\ca}(n_0,C)=O(1)$ for $n\ge n_0$. If $C$ contains isolated vertices then instead of two we require $|V(C)|$ many consecutive isolated vertices and get to the same conclusion.

Thus either $sat_{\ca}(n,G)=O(1)$ or there are no two (resp. $|V(C)|$ many) consecutive isolated vertices in the host graphs, which implies $sat_{\ca}(n,C)=\Omega(n)$.

We are left to prove that $sat_{\ca}(n,C)=O(n)$ always holds. Assume that $C$ has $k$ vertices and let $s$ be the minimum length of an interval $I$ of the vertices of $C$ in the cyclic order such that the vertices of the interval hit every edge of $C$. 

Let $H$ be the cyclically ordered graph on $n$ vertices in which an interval $J$, $|J|=s-1$, of the vertices is connected to every other vertex and there are no other edges besides these in $H$. This $H$ avoids $C$ by the definition of $s$.  Further, $H$ has at most $(s-1)n=O(n)$ edges. Now we greedily add edges to $H$ until we get a graph $H'$ that saturates the property of avoiding $C$. We claim that in $H'$ every vertex not in $J$ has degree at most $2k-s-3$, which implies that $H'$ has $O(n)$ edges, as required.

Assume on the contrary that there is a vertex $v$ outside $J$ with degree at least $2k-s-2$. The vertices of $H$ outside $J\cup\{v\}$ form two intervals. In at least one of these intervals $v$ has at least $\lceil(2k-s-2-(s-1))/2\rceil=k-s$ neighbors. Now on the vertices of $J\cup\{v\}$ and these $k-s$ vertices there is a copy of $G$ where the vertices of $J\cup\{v\}$ play the role of the interval $I$, a contradiction.
\end{proof}

We note that similarly to ordered graphs, when $sat_{\ca}(n,G)=O(1)$ then there exists a number $n_0$ such that $sat_{\ca}(n,G)=sat_{\ca}(n_0,G)$ for $n\ge n_0$.

We define minedges and superedges for cyclically ordered graphs:

\begin{defi}
    Let $C$ be a cyclically ordered graph, and let $uv \in E(C)$. If either $I_{u,v}$ or $I_{v,u}$ is empty and the degree of $u$ and $v$ is one, then $uv$ is a \emph{minedge}.
\end{defi}

\begin{defi}
	Let $C$ be a cyclically ordered graph, and let $uv \in E(C)$. If both of $I_{u,v}$ and $I_{v,u}$ induce an edge, then $uv$ is a \emph{bisuperedge}.
	
\end{defi}

The proofs of the following two statements are analogous to the proofs of Observation \ref{obs:single} and Claim \ref{claim:minedge} and are left to the reader:

\begin{obs}\label{obs:singlecyclic}
	If $G$ is a single edge then $sat_{\ca}(n,G)= 0$.
\end{obs}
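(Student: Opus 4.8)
The plan is to mirror the proof of Observation~\ref{obs:single} exactly. The statement $sat_{\ca}(n,G)=0$ for a single edge $G$ just asserts that there is a cyclically ordered graph on $n$ vertices with no edges that saturates $G$, i.e., that avoids $G$ and yet acquires a copy of $G$ as soon as any edge is added. So I would take $H$ to be the cyclically ordered graph on $n$ vertices with empty edge set.

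First I would observe that $H$ avoids $G$: since $G$ is a single edge and $H$ has no edges, $H$ trivially contains no copy of $G$. This uses that $n\ge 1$ is irrelevant here; the emptiness of $E(H)$ is all that matters. Second I would check the saturation condition: adding an arbitrary edge $e$ to $H$ produces a graph with exactly one edge, which is obviously a copy of the single edge $G$ (any single edge of a cyclically ordered graph is, as an ordered/cyclically ordered subgraph, a copy of $G$, since $G$ has no structure beyond being one edge). Hence $H+e$ contains $G$ for every choice of $e$.

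Therefore $H$ is saturating $G$ and has $0$ edges, which gives $sat_{\ca}(n,G)\le 0$; since the number of edges is nonnegative, $sat_{\ca}(n,G)=0$. There is essentially no obstacle here: the only thing one must be slightly careful about is the convention that a single edge of a cyclically ordered host always counts as a copy of the cyclically ordered single-edge graph $G$, which is immediate because a graph with one edge has no nontrivial cyclic structure to match. This is exactly the cyclic analogue of the argument already given for Observation~\ref{obs:single}, and the authors indeed state it is "left to the reader" for precisely this reason.
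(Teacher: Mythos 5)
Your proof is correct and is exactly the cyclic analogue of the paper's proof of Observation~\ref{obs:single}, which is precisely what the authors intend when they say the proof is ``left to the reader.'' The empty graph on $n$ vertices avoids the single edge $G$ and any added edge is a copy of $G$, so $sat_{\ca}(n,G)=0$.
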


\begin{claim}\label{claim:minedgecyclic}
	If $C$ contains no minedge then $sat_{\circlearrowright}(n,G)= \Theta(n)$.
\end{claim}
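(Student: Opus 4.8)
The plan is to follow the proof of Claim \ref{claim:minedge} essentially verbatim, replacing the linear notion of minedge by the cyclic one and invoking Theorem \ref{thm:cyclicdich} for the dichotomy. Concretely, since the upper bound $sat_{\ca}(n,C)=O(n)$ and the impossibility of any intermediate order of magnitude are already supplied by Theorem \ref{thm:cyclicdich}, it suffices to prove the lower bound $sat_{\ca}(n,C)=\Omega(n)$.

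First I would take a cyclically ordered host graph $H$ on $n$ vertices that saturates $C$ and has $sat_{\ca}(n,C)$ edges, and argue that $H$ cannot contain two cyclically consecutive isolated vertices $u$ and $v$. Indeed, if it did, then one of the two arcs determined by $uv$, say $I_{u,v}$, is empty; adding the edge $uv$ to $H$ must create a copy of $C$, and this copy must use $uv$ since $H$ avoids $C$. In that copy both endpoints of $uv$ have degree one (they were isolated in $H$) and the arc $I_{u,v}$ is empty, so $uv$ plays the role of a minedge of $C$ — contradicting the hypothesis that $C$ has no minedge. Hence the isolated vertices of any minimal saturated host form an independent set with respect to cyclic adjacency, so at least half of the $n$ vertices are non-isolated, giving $sat_{\ca}(n,C)=\Omega(n)$; combined with Theorem \ref{thm:cyclicdich} this yields $sat_{\ca}(n,C)=\Theta(n)$.

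There is essentially no obstacle here; the only point deserving a line of justification is that the newly added edge $uv$ is forced to be the one serving as a minedge in the created copy of $C$, which is immediate from $C$-freeness of $H$ together with the fact that an edge of a cyclically ordered graph whose endpoints are cyclically adjacent and both of degree one is, by the definition of minedge for cyclically ordered graphs, a minedge. This is precisely why the statement can be obtained by the same argument as Claim \ref{claim:minedge}, as announced in the remark preceding Observation \ref{obs:singlecyclic}.
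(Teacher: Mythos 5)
Your proof is correct and is exactly the argument the paper has in mind: it explicitly leaves this claim to the reader as an analogue of Claim \ref{claim:minedge}, namely that two cyclically consecutive isolated vertices in a saturating host would force the new edge between them to serve as a minedge in the created copy of $C$, a contradiction, so $sat_{\ca}(n,C)=\Omega(n)$, and Theorem \ref{thm:cyclicdich} upgrades this to $\Theta(n)$.
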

	
We now show infinite classes of graphs that have a bounded saturation function.

\begin{defi}
	Let $L_k$ be the cyclically ordered matching on vertex set $[2k]$ and edge set $\{(2i-1)(2i):i=1,\dots,k\}$. \footnote{Notice that we have already defined $L_k$ on linearly ordered vertices, here we extend this linear order to a cyclic ordering. Hopefully this ambiguity will not lead to confusion.}
	
	Let $X_k$ be the cyclically ordered matching on vertex set $[n]$ with $n=2k+4$ vertices and edge set $\{(1)(n-1),(2)(n)\}$ plus a copy of $L_k$ placed on the vertex set $\{3,4,\dots, n-2\}$. See Figure \ref{fig:Xk} for an illustration.	
\end{defi}

\begin{lem}\label{lem:l3cyclic}
    Let $C$ be a cyclically ordered graph that contains $L_3$ as a subgraph such that $sat_{\circlearrowright}(n,C) = O(1)$. Let $H_n$ be a host graph on $n$ vertices saturating $C$. Then there exists $n_0$ s.t. every $H_n$ ($n\ge n_0$) contains an isolated vertex $v$ and another vertex $w$ such that adding the edge $vw$ to $H$ we get a copy of $C$ in which $vw$ plays the role of an edge of $C$ which is not a minedge of $C$.
\end{lem}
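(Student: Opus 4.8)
The plan is a proof by contradiction that exploits how strongly the hypothesis $sat_{\ca}(n,C)=O(1)$ constrains large minimal hosts. First I would extract the structural consequences of the hypothesis. Since $C$ contains $L_3$ it has at least three edges, and, as in Claim \ref{claim:minedgecyclic}, $C$ must contain a minedge: if it did not, then $sat_{\ca}(n,C)=\Theta(n)$, contradicting the hypothesis (and in that case the conclusion is anyway vacuous, since no copy of $C$ can use $vw$ in the role of a minedge). For $n$ large, a host $H=H_n$ with $sat_{\ca}(n,C)$ edges has only a bounded number, say at most $t$, of non-isolated vertices, hence it contains a run $I$ of consecutive isolated vertices with $|I|\ge (n-t)/(t+1)$, which we may take far longer than $|V(C)|$; we fix the longest such run $I$.

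Next I would assume for contradiction that the conclusion fails: for every isolated vertex $v$ and every $w$, every copy of $C$ in $H+vw$ (one exists by saturation, and it must use $vw$ since $H$ avoids $C$) uses $vw$ in the role of a minedge $\mu=ab$ of $C$. Deleting $v$ and $w$ from such a copy leaves a copy $\psi$ of $C-\{a,b\}$ inside $H$; since $C\supseteq L_3$ and $a,b$ both have degree one, $C-\{a,b\}$ still contains two disjoint edges, so $\psi$ uses at least two non-isolated vertices of $H$, and — assuming $C$ has no isolated vertex, which is the essential case — all of $\psi$ lies on the at most $t$ non-isolated vertices of $H$. Hence there are only boundedly many candidate pairs $(\mu,\psi)$, each carrying a prescribed \emph{gap arc} of $H$, namely the arc between the images of the two vertices of $C$ flanking $\mu$, into which $a$ and $b$, and therefore $v$ and $w$, must be placed.

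Finally I would choose $v$ and $w$ inside $I$ so as to defeat all of these boundedly many candidates simultaneously. Because the two endpoints of a gap arc are non-isolated while $I$ is a run of isolated vertices, a gap arc can contain a vertex lying deep inside $I$ only if it contains all of $I$; taking $v$ and $w$ far apart inside $I$ (say near its two quarter-points) and using that there are only constantly many candidates, I would show that no candidate can realise $vw$ as a minedge for at least one such pair — either because the complement of the forced gap arc is too short to accommodate the two disjoint edges that $\psi$ must contain, or because two candidate partial copies supported on essentially disjoint portions of $H$ can be spliced together along the long isolated run to produce a full copy of $C$ already present in $H$, contradicting that $H$ avoids $C$. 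For that pair $v,w$ the copy of $C$ created in $H+vw$ must then use $vw$ as a non-minedge, which is exactly the assertion.

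The hard part is this last step: ruling out that $C$ minus a minedge can ``wrap around'' the long isolated run $I$ in precisely the configuration needed to keep $vw$ in a minedge role for all relevant $v,w$. This demands an explicit description of the boundedly many partial copies together with their gap arcs, some care when $H$ has several long isolated runs (one works throughout with the longest one), and it is here that the hypothesis $L_3\subseteq C$ — which forces $C$ minus any minedge to still contain two disjoint edges and hence to be unable to squeeze into a short interval — is used in an essential way.
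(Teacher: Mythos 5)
The framework you set up is reasonable and a couple of the intermediate observations are sound --- there are only $O(1)$ candidate partial copies $\psi$ of $C$ minus a minedge living on the non-isolated vertices of $H$, and a gap arc that meets the interior of a maximal isolated run $I$ must contain all of $I$ --- but the proof stops short exactly where you flag it, and the two mechanisms you offer for the final contradiction do not work as stated. The ``complement of the gap arc is too short'' branch never applies: that complement is precisely the arc carrying the image of $\psi$, so it always has at least $|V(C)|-2$ vertices, and nothing about choosing $v,w$ near the quarter-points of $I$ makes it smaller. The ``splicing'' branch is left unspecified: you never say which two partial copies are spliced, along what interface, nor why the result is a legal cyclically ordered embedding of $C$ into $H$. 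Most seriously, choosing $v,w$ far apart inside $I$ puts no pressure at all on the minedge reading of $vw$: the short arc $I_{v,w}$ consists of isolated vertices, so for $C$ without isolated vertices the requirement that it contain no copy vertices is automatic, and every candidate whose gap arc contains $I$ remains live; your stated role for $L_3$ (that $C$ minus a minedge cannot ``squeeze into a short interval'') never connects, because $\psi$ sits in the long arc $I_{w,v}$, not a short one.

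The paper's proof goes a genuinely different way and the contrast is instructive. It takes $w$ to be the very next isolated vertex after $v$, not a far one, and uses $L_3\subseteq C$ to extract two disjoint $H$-edges $m_1m_2$ and $m_3m_4$ appearing in the copy $C'$ created by adding $vw$, in clockwise order $m_1,m_2,m_3,m_4$ from $v$. It then makes a \emph{second} augmentation: adding $vm_3$ must, by the indirect hypothesis, again create a copy $C''$ in which $vm_3$ is a minedge, which forces all remaining vertices of $C''$ into one of $I_{v,m_3}$ or $I_{m_3,v}$; in the first case $m_3m_4$, and in the second $m_1m_2$, can be substituted for $vm_3$, each time producing a copy of $C$ already inside $H$, a contradiction. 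So the role of $L_3$ there is not to bound the size of a partial copy but to guarantee two $H$-edges in the right cyclic positions for an explicit swap. Your plan, centred on a single augmentation with $v,w$ far apart, has no analogue of that swap and would need a substantially different final step to close.
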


\begin{proof}
	Assume on the contrary that such a pair $v,w$ does not exist. If $n_0$ is big enough then any host graph $H_n$ ($n\ge n_0$) saturating $C$ must contain at least two isolated vertices. Let $v$ be an isolated vertex of $H_n$.	First let $w$ be the first (in clockwise order) isolated vertex after $v$. Adding the edge $vw$ to $H$ a copy of $C$ is created, which we denote by $C'$. $C'$ contains a copy of $L_3$ which we denote by $L_3'$. No matter if $vw$ is in $L_3'$ or not, there are always two minedges, $m_1m_2$ and $m_3m_4$, in $L_3'$, such that starting from $v$ and going clockwise the vertices come in the following order: $m_1,m_2,m_3,m_4$. Now let $C''$ be a copy of $C$ created when adding the edge $vm_3$ to $C$. By the indirect assumption, $vm_3$ has to be a minedge in $C''$. This implies that the rest of the vertices of $C''$ can be found either in $I_{v,m_3}$ or $I_{m_3, v}$. In both cases we can replace $vm_3$ by some edge (either $m_3m_4$ or $m_1m_2$) to get a copy of $C$ already in $H$, a contradiction.
\end{proof}

\begin{claim} \label{claim:lkcyclic}
	$sat_{\circlearrowright}(n,L_k) = \Theta(n)$ for $k \geq 2$.
\end{claim}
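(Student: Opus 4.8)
My plan is to show the lower bound $sat_{\circlearrowright}(n,L_k)=\Omega(n)$ for $k\ge 2$; combined with Theorem \ref{thm:cyclicdich} this gives $\Theta(n)$. The obvious first move is to check whether $L_k$ has a minedge: it does (in fact every edge of $L_k$ is a minedge when $k\ge 2$, since consecutive edges are separated by the rest of the matching around the cycle), so Claim \ref{claim:minedgecyclic} does not apply directly, and we genuinely need to work. The strategy is the by-now familiar one: suppose $H$ is a host graph on $n$ vertices saturating $L_k$ with $sat_{\circlearrowright}(n,L_k)$ edges, assume for contradiction that $H$ has few edges, hence many consecutive isolated vertices, and derive a copy of $L_k$ already present in $H$.

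The heart of the argument is a swap. First I would note that for $k\ge 3$ we may invoke Lemma \ref{lem:l3cyclic} (since $L_k$ contains $L_3$ as a cyclically ordered subgraph and, under the contrary assumption $sat_{\circlearrowright}(n,L_k)=O(1)$, the hypothesis of the lemma is met): for $n$ large every host $H_n$ has an isolated vertex $v$ and a vertex $w$ such that adding $vw$ creates a copy of $L_k$ in which $vw$ is \emph{not} a minedge of $L_k$ — but $L_k$ consists only of minedges, a contradiction. This already handles $k\ge 3$. For $k=2$ the graph $L_2$ has no non-minedge at all, so the Lemma is vacuous there; I would instead argue directly: if $H$ saturating $L_2$ has two adjacent isolated vertices $u<v$ (which it must if it has $o(n)$ edges, by Theorem \ref{thm:cyclicdich}), then adding $uv$ must create a copy of $L_2$, so this new edge plays the role of a minedge of $L_2$, meaning there is already in $H$, on one of the two arcs cut off by $u,v$, an edge $xy$ that is isolated and whose two endpoints are cyclically consecutive — i.e. a minedge — located around the cycle in the slot the \emph{other} edge of $L_2$ would occupy. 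That edge $xy$ together with $uv$ is a copy of $L_2$ in $H+uv$... but we want a copy already in $H$. The fix: instead of $uv$, add the edge $u'v'$ where $u',v'$ are the two \emph{endpoints of the arc not containing the copy of the other $L_2$-edge} that are forced to be isolated (there are at least two adjacent isolated vertices on that side too, if $H$ is sparse); by symmetry this again must be completed to a minedge of $L_2$, forcing an edge of $H$ on the first arc, and the two edges of $H$ so produced form $L_2 \subseteq H$, a contradiction.

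I expect the main obstacle to be the $k=2$ base case: unlike $k\ge 3$ there is no "non-minedge" to exploit, so the clean Lemma \ref{lem:l3cyclic} machinery is unavailable and one must run a more hands-on double-counting of where the forced isolated vertices sit relative to the two possible placements of the second edge. In fact it may be cleanest to sidestep this entirely by observing that $L_2$ (two independent cyclically non-adjacent edges) is, as an \emph{ordered} graph under the induced linear order, separable, and that a separability-type argument works verbatim in the cyclic setting: adding an isolated edge very "close" to one isolated vertex forces one edge of $L_2$ on one side while adding it "far" forces the other edge on the other side, and these two coexist in $H$. I would phrase the whole claim using this split-into-two-arcs idea uniformly for all $k\ge 2$ — reducing $L_k$ to "it contains two cyclically non-adjacent minedges, both forced simultaneously" — which makes the Lemma \ref{lem:l3cyclic} appeal optional and keeps the proof short. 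Either way, once $\Omega(n)$ is in hand, Theorem \ref{thm:cyclicdich} upgrades it to $\Theta(n)$ and the proof is complete.
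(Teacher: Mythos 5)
Your treatment of $k\ge 3$ is exactly the paper's: contains $L_3$, invoke Lemma~\ref{lem:l3cyclic} to get a non-minedge role, contradict the fact that every edge of $L_k$ is a minedge. That part is correct and essentially verbatim.

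The base case $k=2$, however, is where the paper does the only nontrivial work, and your sketch has a genuine gap there. First, when adding $uv$ between two consecutive isolated vertices creates a copy $\{uv,xy\}$ of $L_2$, you assert that $xy$ ``is isolated and whose two endpoints are cyclically consecutive --- i.e.\ a minedge.'' That is false: $xy$ plays the role of a minedge \emph{of the forbidden graph} $L_2$, which only means $x$ and $y$ sit on a common arc of $uv$; nothing forces $x,y$ to be consecutive in $H$ or degree one in $H$. Second, the proposed ``fix'' --- add a second edge $u'v'$ between consecutive isolated vertices on the other arc, obtaining $\{u'v',x'y'\}$, and then combine $xy$ with $x'y'$ --- does not go through: there is no reason $x'y'$ should land on the arc you want, and even if $xy$ and $x'y'$ end up on the same arc of $uv$ they may \emph{cross}, in which case they do not form $L_2$ (in the cyclic setting two disjoint edges form $L_2$ iff they do not cross). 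The ``separability works verbatim cyclically'' alternative suffers from the same crossing problem, since there is no first/last vertex to pin the two forced edges to opposite, non-crossing sides.

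The paper resolves exactly this crossing issue by a careful choice of the edge $xy$: take the edge of $H$ that ``closes first'' clockwise from the isolated vertex $v$, i.e.\ label its endpoints $x,y$ so that $x\in I_{v,y}$ and $I_{v,y}$ contains no edge of $H$. Then, upon adding $vy$, the forced second edge $ab$ of the new copy of $L_2$ must lie entirely in $I_{y,v}$; since $x$ is on the opposite side of $y$ from $a,b$, the four vertices come in cyclic order $x,y,a,b$, so $xy$ and $ab$ cannot cross and cannot share a vertex, hence they form a copy of $L_2$ already in $H$. Your argument is missing precisely this choice (and the resulting non-crossing guarantee), so the $k=2$ case is not established as written.
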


\begin{proof}
    First we deal with the case $k=2$, and assume on the contrary that $sat_{\circlearrowright}(n,L_2) = O(1)$. Thus there exists a host graph $H$ saturating $L_2$ with isolated vertex $v$. As $H$ contains at least one edge, we can choose an edge $xy \in E(H)$ such that $I_{v,y}$ does not contain any edges. By connecting $v$ and $y$, by assumption we obtain a copy of $L_2$ in $H$, which means that either $I_{v,y}$ or $I_{y,v}$ contains an edge. The first case is impossible by our choice of $xy$. In the second case this edge and $xy$ together form a copy of $L_2$ in $H$, a contradiction.
    
    Assume that $sat_{\circlearrowright}(n,L_k) = O(1)$ for some $k \geq 3$. Trivially, $L_k$ contains $L_3$, and we can apply Lemma \ref{lem:l3cyclic}. Thus there exists a host graph $H$ saturating $L_k$ with isolated vertex $v$ and another vertex $w$ such that by adding the edge $vw$ to $H$ we get a copy of $L_k$ in which $vw$ plays the role of an edge of $L_k$ which is not a minedge of $L_k$. However, every edge of $L_k$ has to be a minedge, thus this is a contradiction.
\end{proof}

We are ready to prove a statement similar to Theorem \ref{thm:minsup}:

\begin{thm}\label{thm:minsupcyclic}
    If the cyclically ordered graph $C$ contains $L_3$ and every edge of $C$ is a minedge or a bisuperedge then $sat_{\ca}(n,C)=\Theta(n)$.
\end{thm}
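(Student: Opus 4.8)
The plan is to mimic the proof of Lemma~\ref{lem:minsupdeg}/Theorem~\ref{thm:minsup}, but in the cyclic setting and using Lemma~\ref{lem:l3cyclic} to replace the role played there by the ``first vertex'' trick. By Theorem~\ref{thm:cyclicdich} it suffices to show $sat_{\ca}(n,C)=\Omega(n)$, and for a contradiction I would assume $sat_{\ca}(n,C)=O(1)$. Then for $n$ large, a host graph $H=H_n$ saturating $C$ with $sat_{\ca}(n,C)$ edges has at least two consecutive isolated vertices (otherwise it already has $\Omega(n)$ edges). Since $C$ contains $L_3$, Lemma~\ref{lem:l3cyclic} applies: there is an isolated vertex $v$ of $H$ and a vertex $w$ such that adding $vw$ creates a copy $C'$ of $C$ in which $vw$ plays the role of an edge $e$ of $C$ that is \emph{not} a minedge of $C$.

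**Key steps.** First, since every edge of $C$ is a minedge or a bisuperedge and $e$ is not a minedge, $e$ must be a bisuperedge of $C$: both $I_{l(e),r(e)}$ and $I_{r(e),l(e)}$ induce an edge of $C$. Transported into $H$ via the copy $C'$, this means the new edge $vw$ has, on each of its two arcs in the cyclic order, an edge of $H$ lying inside (one on the arc $I_{v,w}$, one on the arc $I_{w,v}$) — call these $x_1y_1$ and $x_2y_2$. Second, I would replace $vw$ in $C'$ by one of these two edges to obtain a copy of $C$ lying entirely inside $H$, contradicting that $H$ avoids $C$. The delicate point is that replacing $vw$ by, say, $x_1y_1$ changes the cyclic positions, so I need to argue that the rest of $C'$ still fits. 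The clean way: one of the two arcs determined by $v$ and $w$, say $I_{v,w}$, contains none of the other vertices of $C'$ except possibly... — actually $v$ is isolated, so I can first choose $w$ carefully (as in Lemma~\ref{lem:l3cyclic}'s proof, walking $w$ clockwise from $v$), but here the conclusion of Lemma~\ref{lem:l3cyclic} already hands me $v,w$. So instead I would argue structurally: in the copy $C'$, the bisuperedge $e=vw$ has exactly one other edge of $C$ inside each arc, and all remaining vertices of $C'$ lie outside $e$ on one specific side; hence the interval of $H$ strictly between $v$ and $w$ on the ``short'' arc contains only the vertices forming that inner edge of $C$ (which are already in $H$), so deleting $v$ from $C'$ and using $x_1y_1$ (respectively $x_2y_2$) in place of $vw$ produces a bona fide copy of $C$ in $H$. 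Since $v$ is isolated in $H$, no edge of $H$ uses $v$, so this copy avoids $v$ entirely and lives in $H\setminus\{v\}\subseteq H$.

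**Main obstacle.** The main subtlety is exactly this ``$e$ is a bisuperedge $\Rightarrow$ we can swap it out'' step: unlike the linear case (Lemma~\ref{lem:minsupdeg}), where ``superedge'' means a single covered edge and the covered edge lies comfortably between the endpoints, here a bisuperedge covers edges on \emph{both} arcs, and I must make sure that the copy of $C$ produced after the swap respects the cyclic order of all of $C$'s vertices, not just the swapped edge. I expect to handle this by noting that because $vw$ plays the role of the bisuperedge $e$, the cyclic positions of the remaining vertices of $C$ relative to $e$ force them to sit on the two arcs in a way that is unaffected by sliding $vw$ inward to $x_iy_i$ on one arc — the other arc's vertices and the ``far'' arc are untouched — so the combinatorial cyclic type is preserved. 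A secondary point to state carefully is that Lemma~\ref{lem:l3cyclic} only requires $C\supseteq L_3$ and $sat_{\ca}(n,C)=O(1)$, both of which are exactly our hypotheses, so its application is legitimate; and that the contradiction reached ($C\subseteq H$) contradicts $H$ saturating $C$, completing the proof that $sat_{\ca}(n,C)=\Theta(n)$.
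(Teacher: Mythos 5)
Your setup (reduce to $\Omega(n)$ via Theorem~\ref{thm:cyclicdich}, invoke Lemma~\ref{lem:l3cyclic}, note that $vw$ plays the role of a bisuperedge) is correct and matches the paper, but the core swap step does not work. You propose to ``delete $v$ from $C'$ and use $x_1y_1$ in place of $vw$.'' This fails already on a count: $x_1y_1$ is \emph{already} an edge of $C'$, so removing $vw$ and re-using $x_1y_1$ leaves you with $|E(C)|-1$ edges; likewise $V(C')\setminus\{v\}$ has $|V(C)|-1$ vertices. There is no new edge or vertex to fill the role that $x_1y_1$ (resp.\ $y_1$, $x_1$) was already playing. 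The structural claim you lean on --- that ``the interval of $H$ strictly between $v$ and $w$ on the short arc contains only the vertices forming that inner edge'' --- is also unjustified: the definition of a bisuperedge only requires an edge on each arc, not that the arc consists of nothing else.

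The paper avoids both problems by \emph{not} performing the swap inside $C'$. After obtaining $v$ from Lemma~\ref{lem:l3cyclic}, it chooses $w$ to be the clockwise-\emph{first} vertex after $v$ with the lemma's property (you explicitly decline to do this), and then looks at $w'$, the counterclockwise neighbor of $w$. By minimality of $w$, the copy $C''$ created by adding $vw'$ must have $vw'$ playing the role of a \emph{minedge}; hence one arc of $vw'$ is empty of $C''$-vertices. Now the swap is legitimate: remove $vw'$ (one edge, incident to the isolated $v$ and the degree-one $w'$) and add the edge $e$ of $H$ that sits inside $I_{v,w}$ (this $e$ comes from $C'$ being a bisuperedge copy). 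This is a genuine one-out/one-in exchange, and because $vw'$ was a minedge with an empty arc, sliding it to $e$ preserves the cyclic type, yielding a copy of $C$ already in $H$. In short, you need the minimality-of-$w$ trick and the \emph{second} copy $C''$ --- exactly the mechanism of Lemma~\ref{lem:minsupdeg} that you set out to mimic but then dropped.
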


\begin{proof}	
	Let $M$ be the set of minedges and $B$ be the set of bisuperedges of $C$. 
	If $B$ is empty then we are done by Claim \ref{claim:lkcyclic} thus we can assume that $B$ is not empty. Assume on the contrary that $sat_{\circlearrowright}(n,C)= O(1)$. 
    
	Let $H_n$ be a host graph saturating $C$ with $sat_{\circlearrowright}(n,C)$ edges. If $n$ is big enough, Lemma \ref{lem:l3cyclic} guarantees the existence of vertices $v,w$ in $H_n$. Wlog. assume that $w$ is the first (in clockwise order) after $v$ with the property guaranteed by Lemma \ref{lem:l3cyclic}. Then $vw$ is not a minedge in the copy $C'$ of $C$ created when adding $vw$ to $H_n$. As every edge of $C$ is either a minedge or a bisuperege, $vw$ must be a bisuperedge and thus there is an edge $e$ in $H$ on the vertices of the interval $I_{vw}$. Let $w'$ be the counterclockwise neighbor of $w$ (notice that it cannot be $v$). By our choice of $w$, in the copy $C''$ created when adding $vw'$ to $H$, $vw'$ must be a minedge. Replacing $vw'$ in $C''$ with the edge $e$ we get a copy of $C$ in $H$, a contradiction.
\end{proof}

\begin{figure}[h]
\centering
\includegraphics[width=0.7\textwidth]{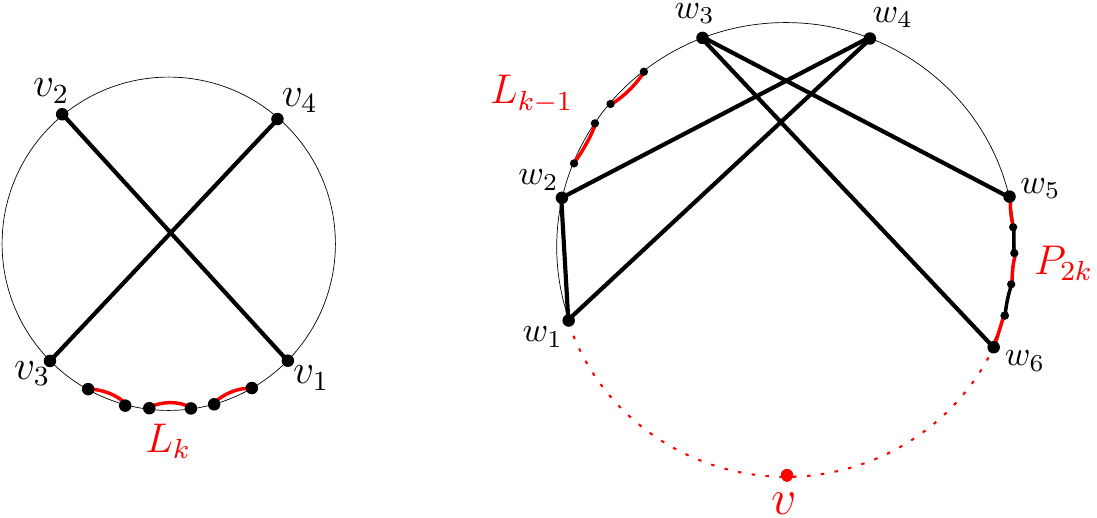}
\caption{\label{fig:Xk} Graph $X_k$ and the graph $H_n$.}
\end{figure}

Finally, we give an infinite class of cyclically ordered graphs with bounded saturation function (note that $sat_{\circlearrowright}(n,X_0)=\Theta(n)$ by Claim \ref{claim:minedgecyclic}):

\begin{thm}\label{thm:xk}
    $sat_{\circlearrowright}(n,X_k) = O(1)$ for every $k\geq 1$.
\end{thm}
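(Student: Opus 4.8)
The plan is to exhibit an explicit witness $H_n$ for $X_k$ and invoke Lemma~\ref{lem:witness}'s cyclic analogue (i.e., the same reasoning as in Theorem~\ref{thm:cyclicdich} plus the witness idea: a graph with one isolated vertex such that every edge added at that vertex creates a copy of $X_k$, which can then be blown up by inserting an interval of isolated vertices). Since $X_k$ is a matching, one isolated vertex suffices. Concretely I would take a host graph $H_n$ built from two ``halves'': delete from $X_k$ the edge $(1)(n-1)$ incident to vertex $1$, obtaining a pattern $X_k'$, and also delete the edge $(2)(n)$, obtaining $X_k''$; then place overlapping copies of $X_k'$ and $X_k''$ around the cycle, with a single isolated vertex $v$ sitting in the small ``gap'' where the two long edges $(1)(n-1),(2)(n)$ of $X_k$ need to land. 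This is the cyclic counterpart of the $\HH(G)$ construction from Definition~\ref{def:hg}, adapted so that there is essentially one spot where a new edge incident to $v$ can play the role of one of the two crossing ``wrap-around'' edges of $X_k$, while the rest of the copy of $X_k$ is already present in $H_n$.

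First I would set up $H_n$ carefully on $n$ vertices: place an isolated vertex $v$; going clockwise from $v$, put a copy of the $L_k$-block, then vertices that serve as the right endpoints of the two long edges, then (using the cyclic wrap) the left endpoints, arranged so that there is a second copy of the $L_k$-block shifted around, with the two blocks overlapping on the part that is common to $X_k'$ and $X_k''$. The key design requirement is: for every vertex $w \ne v$, adding $vw$ yields a copy of $X_k$ in which $vw$ is one of $(1)(n-1)$ or $(2)(n)$, and the remaining edge of $\{(1)(n-1),(2)(n)\}$ together with the $L_k$-block is found among the already-present edges of $H_n$ lying on one side of $v$. Handling the boundary cases — when $w$ is one of the few vertices near the overlap region — will need a couple of extra ``helper'' edges near $v$ (analogous to the edges $a,b,c$ in the proofs of Theorem~\ref{thm:gamma3}-type claims), chosen so that these cases are covered too.

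Then I must verify that $H_n$ avoids $X_k$. Here the structure of $X_k$ helps: $X_k$ is a matching with $2k+2$ edges, of which $2k$ are minedges (the $L_k$-block) and exactly two are non-minedges, namely $(1)(n-1)$ and $(2)(n)$, which are ``long'' crossing edges each covering a large interval. So in any copy of $X_k$ in $H_n$, two edges must play the role of these two long crossing edges, and I would argue by a counting/interval argument — as in Lemma~\ref{lemma:g3ag3} and the $\Gamma_{\{0,1,0\}}$ claim — that $H_n$ simply does not contain two edges with the right span-and-crossing pattern unless one of them is incident to $v$; but $v$ is isolated in $H_n$, contradiction. The only edges of $H_n$ with large span are the ``long'' edges coming from the two half-copies and the few helper edges, and I would check that no two of them can simultaneously serve as $(1)(n-1)$ and $(2)(n)$ while the $2k$ minedges of $X_k$ fit in the $L_k$-positions; this reduces to a finite check on the overlap region plus a global parity/count ($|V(X_k)| = 2k+4$ versus the number of vertices available on any one side).

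The main obstacle I anticipate is the avoidance verification: making sure that the overlapping of the two half-copies, together with the helper edges inserted near $v$ to cover the boundary cases, does not accidentally create a copy of $X_k$ — in particular that no copy uses a long edge from the ``$X_k'$ side'' together with a long edge from the ``$X_k''$ side'' wrapped the wrong way around the cycle. This is exactly the kind of case analysis that appears in the $\Gamma_{\{1,0,1\}}$ and $\Gamma_{\{1,1,0\}}$ claims, and I expect it to require tracking precisely how many vertices lie between prescribed endpoints and exploiting that the two non-minedges of $X_k$ are ``interleaved'' (their endpoints alternate: $1,2,\dots,n-1,n$), so a valid placement forces a rigid alignment that the construction deliberately blocks except at $v$. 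Once avoidance and the ``every added edge at $v$ creates $X_k$'' property are established, blowing up $v$ into an interval of isolated vertices and appealing to the cyclic version of Lemma~\ref{lem:witness} gives $sat_{\circlearrowright}(n,X_k)=O(1)$ for all $k\ge 1$.
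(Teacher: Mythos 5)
Your high-level plan is the same as the paper's: build an explicit $O(1)$-edge host $H_n$ with isolated vertices, show it avoids $X_k$, show every edge added at an isolated vertex creates a copy, and then blow up the isolated region (the cyclic analogue of the witness argument, which the paper invokes implicitly with a footnote). You also correctly identify the key structural handle — the two non-minedges $(1)(n\!-\!1)$ and $(2)(n)$ of $X_k$ form a crossing pair with large span, so any copy of $X_k$ in a sparse host must supply such a crossing pair — and this is exactly what drives the paper's avoidance argument.

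That said, there is a genuine gap: the proposal stops at a design brief rather than a construction. You write that the witness is ``the cyclic counterpart of the $\HH(G)$ construction'' plus ``a couple of extra helper edges near $v$'' that ``will need'' to be chosen, without specifying them; and the avoidance verification is deferred to ``I would argue by a counting/interval argument'' and ``I would check.'' But for this theorem the whole difficulty \emph{is} finding a concrete $H_n$ and making the case analysis close. The paper's $H_n$ is not an overlap of two half-copies of $X_k$: it is a bespoke graph (Figure~\ref{fig:Xk}) with six special vertices $w_1,\dots,w_6$ yielding exactly four crossing pairs, a path $P_{2k}$ whose odd/even edge classes $M_k,M'_{k-1}$ play the role of $L_k$ in different cases, a separate $L_{k-1}$ block, and $3k+3$ edges in total. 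The avoidance proof then enumerates those four crossing pairs and rules out each by counting vertices on their sides and inside the resulting intervals; the completion proof splits into three cases on the location of $w$ and reuses the two overlapping crossing pairs. None of this can be read off from ``overlapping $X_k'$ and $X_k''$ plus helper edges,'' and it is not clear that a literal cyclic $\HH(G)$ overlap (which would tend to place two long crossing edges and two copies of $L_k$ that together already contain $X_k$ when read around the cycle) even avoids $X_k$. To turn your plan into a proof you must (a) commit to the exact edge set of $H_n$, including the ``helper'' edges, (b) enumerate all crossing pairs in $H_n$ and show none can be $\{(1)(n\!-\!1),(2)(n)\}$ together with an $L_k$ in one of the four arcs they cut out, and (c) give, for each position of $w$ relative to the non-isolated part, an explicit embedding of $X_k$ using $vw$; these are precisely the steps the paper carries out and the proposal leaves open.
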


\begin{proof}
   We call a pair of edges $v_1v_2$ and $v_3v_4$ \emph{crossing} if $v_1<v_3<v_2$ and $v_2<v_4<v_1$. 
   
   Let $H_n$ be the graph as drawn on Figure \ref{fig:Xk}, with some vertices and edge sets labeled. It has $3k+3=O(1)$ edges. 
   
   First we show that $H=H_n$ avoids $X_k$. Assume on the contrary and let $e,f$ be the crossing pair of edges of $H$ playing the role of $v_1v_2$ and $v_3v_4$. Notice that in $H$ there are only $4$ crossing pairs of edges. Also, both $e$ and $f$ must have $2k+1$ non-isolated edges on one of their sides (as this holds for $v_1v_2$ and $v_3v_4$ in $X_k$). This does not hold for $w_1w_4$ and $w_3w_6$. Thus we must have $\{e,f\}=\{w_2w_4,w_3w_5\}$. The endvertices of these two edges split the vertices into four intervals and a copy of $L_k$ is contained in one of them. First, $I_{w_3,w_4}$ and $I_{w_4,w_5}$ are empty. Moreover, $w_2w_3$ contains only $L_{k-1}$. Finally, $I_{w_5,w_2}$ contains exactly $2k$ vertices which is exactly as many as $L_k$ has. However, the vertex $w_1$ is not connected with any other vertex from the interval which is not the case with $L_k$. Contradiction.
   
   Next we show that adding any edge incident to the isolated vertices in $I_{w_6w_1}$ creates a copy of $X_k$. As there are only constant many vertices outside $I_{w_6w_1}$, this implies that greedily adding $O(1)$ edges to $H_n$ we get a graph saturating $G$ and having $O(1)$ edges, finishing the proof.\footnote{We could define witness graphs for cyclically ordered graphs similarly to how we have defined them for ordered graphs. With this terminology, $H$ would be a witness for $X_k$.}
   
	We denote by $M_k$ (resp. $M'_{k-1}$) the set of odd (resp. even) edges of the edges of the path $P_{2k}$ on $2k$ vertices of $H$. Let $v$ be an arbitrary isolated vertex in $I_{w_6w_1}$ and let $w$ be another arbitrary vertex. We have to check that adding the edge $vw$ to $H$ creates a copy of $X_k$. There are three cases:
    
    Case 1: $w_6 \leq w \leq w_1$. In this case $vw$ together with $M'_{k-1}$ forms $L_k$, $w_2w_4$ and $w_5w_3$ play the role of the crossing edges in the copy of $X_k$.
    
    Case 2: $w_1 < w < w_4$. $M_k$ forms $L_k$, $vw$ and $w_1w_4$ play the role of the crossing edges in the copy of $X_k$.
    
    Case 3: $w_4 \leq w < w_6$.  $L_{k-1}$ and $w_1w_2$ form $L_k$, $vw$ and $w_3w_6$ play the role of the crossing edges in the copy of $X_k$.
\end{proof}

\section{Semisaturation}\label{sec:ssat}

In this section we consider the semisaturation problem for (cyclically) ordered graphs. 

A graph $H$ is semisaturating $G$ if adding any edge to $H$ creates a new copy of $G$. Let $ssat_<(n,G)$ (resp. $ssat_\ca(n,G)$) be the minimum size of a $G$-semisaturated ordered (resp. cyclically ordered) graph on $n$ vertices.

As it was the case with $0$-$1$ matrices, the semisaturation problem turns out to be much easier than the saturation problem. We are able to characterize exactly which (cyclically) ordered graphs have bounded semisaturation function.

\begin{thm} \label{thm:semiord}
    For an ordered graph $G$, $ssat_{<}(n,G)=O(1)$ if and only if all the following hold:
    \begin{enumerate}
        \item $G$ contains a minedge,
        \item $G$ contains an edge connecting the first vertex with a degree one vertex,
        \item $G$ contains an edge connecting the last vertex with a degree one vertex,
    \end{enumerate}
    
    and $ssat_{<}(n,G)=\Theta(n)$ otherwise.
\end{thm}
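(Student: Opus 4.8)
My plan is to prove both directions, noting first that the dichotomy is free: every $G$-saturated graph is $G$-semisaturated, so $ssat_{<}(n,G)\le sat_{<}(n,G)=O(n)$ by Theorem~\ref{thm:ordereddich}. Hence it suffices to show that (a) if all of (1)--(3) hold then $ssat_{<}(n,G)=O(1)$, and (b) if any single one fails then $ssat_{<}(n,G)=\Omega(n)$. Throughout I would use the two elementary observations: in a $G$-semisaturated host $H$, adding a non-edge $e$ must create a copy of $G$ that \emph{uses} $e$; and any ordered embedding of $G$ sends the first vertex of $G$ to the leftmost vertex of the copy and the last vertex to the rightmost one, so if $e$ is incident to the globally leftmost vertex $v$ of $H$ then in a copy using $e$ the first vertex of $G$ is mapped to $v$ (symmetrically on the right).

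For part (b) I would argue via the extreme vertices. If (1) fails, then a $G$-semisaturated $H$ cannot have two consecutive isolated vertices $u,u+1$: adding $u(u+1)$ would create a copy using that edge, but since $u,u+1$ have degree one and nothing lies between them, their preimages would be consecutive degree-one vertices of $G$ joined by an edge, i.e.\ a minedge — contradiction; so $H$ has a non-isolated vertex in every pair $\{2i-1,2i\}$, giving $|E(H)|=\Omega(n)$. If (2) fails, then either vertex $1$ of $G$ is isolated, or every neighbour of vertex $1$ has degree $\ge 2$; let $v$ be the leftmost vertex of $H$. In the first case, for any $u$ with $vu\notin E(H)$ the copy created by adding $vu$ maps the first vertex of $G$ to $v$ and realizes an edge of $G$ at vertex $1$ — impossible; so $v$ is joined to all of $V(H)$ and $|E(H)|\ge n-1$. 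In the second case, for any isolated $u\ne v$ the copy created by adding $vu$ maps vertex $1$ of $G$ to $v$, so $u$ is the image of a neighbour of vertex $1$ of degree $\ge 2$, forcing $u$ to have degree $\ge 2$ in $H+vu$ — impossible since $u$ was isolated; so $H$ has at most one isolated vertex and $|E(H)|=\Omega(n)$. The failure of (3) is symmetric using the rightmost vertex. In every case $ssat_{<}(n,G)=\Theta(n)$.

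For part (a) I would fix a minedge $f=p(p+1)$, an edge $e_L=1c$ with $\deg_G(c)=1$, and an edge $e_R=dk$ with $\deg_G(d)=1$ (where $k=|V(G)|$), and for each large $n$ construct a $G$-semisaturated $H_n$ with a bounded number of edges. The construction would place a bounded \emph{left gadget} $B_L$ on an initial block of vertices, a bounded \emph{right gadget} $B_R$ on a final block, a long run of isolated vertices in between, and a bounded set of long edges joining $B_L$ to $B_R$; here $B_L$ is a copy of a long enough prefix of $G$ (containing copies of both $G[\{1,\dots,p-1\}]$ and $G[\{1,\dots,c\}]$, with vertex $1$ of $G$ at its leftmost vertex) and symmetrically $B_R$ is a copy of a long enough suffix of $G$. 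I would then check that any added non-edge $xy$ creates a new copy of $G$ by a case analysis on the position of $xy$: an edge in the isolated middle is placed as the minedge $f$ (with $G[\{1,\dots,p-1\}]$ realized inside $B_L$ and $G[\{p+2,\dots,k\}]$ inside $B_R$); an edge meeting $B_L$ is placed as $e_L$ (vertex $1$ on the leftmost vertex of $B_L$, the part of $G$ strictly between $1$ and $c$ realized inside $B_L$, $y$ playing $c$, and the part after $c$ realized to the right using the middle and $B_R$); an edge meeting $B_R$ is placed as $e_R$; and the few remaining pairs are already edges of $H_n$. Since $|E(H_n)|=O(1)$, this yields $ssat_{<}(n,G)=O(1)$.

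The lower-bound half is short and robust; the whole difficulty lies in part (a). The hard part will be to arrange the gadgets $B_L,B_R$ and the connecting edges precisely enough that this case analysis closes in \emph{every} situation — in particular for added edges of "medium" span, for edges with both endpoints inside one gadget, and when $\deg_G(1)$ or $\deg_G(k)$ exceeds $1$ so that the first or last vertex of $G$ cannot be placed on a vertex that was isolated before the new edge was added. As with the witness constructions of Section~\ref{sec:linked}, I expect this to reduce to a finite but slightly tedious verification, and it is exactly here that all three hypotheses enter: (1) for edges added in the middle, (2) for those added near the left end, and (3) for those near the right end.
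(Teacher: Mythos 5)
Your lower bound is correct and is essentially the paper's: if (1) fails, no two consecutive vertices of $H$ can both be isolated; if (2) (resp. (3)) fails, no vertex other than the leftmost (resp. rightmost) can be isolated, since adding an edge from the extreme vertex to an isolated vertex would force that isolated vertex to be mapped to a neighbour of $1$ (resp. $k$) in $G$ with degree one, or to an endpoint of an edge incident to the supposedly isolated vertex $1$ of $G$. You split the second case into two subcases (vertex $1$ of $G$ isolated or not), which the paper treats in one sentence, but the logic is the same. So part (b) stands.

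Part (a) has the right architecture — bounded left block, long isolated middle, bounded right block, plus three cases where the new edge $vw$ plays $f$, $e_L$, or $e_R$ — but your concrete gadget (a copy of a prefix/suffix of $G$ with some long connecting edges) does not close, and you correctly sense this. The failure is not cosmetic: when $w$ lies \emph{inside} $B_L$ and is to play vertex $1$ of $G$ in the $e_L$-case, you must realize $G[\{2,\dots,c-1\}]$ on vertices of $B_L$ strictly to the right of $w$; but those vertices carry only a tail of the chosen prefix, which in general has nothing to do with $G[\{2,\dots,c-1\}]$. Likewise, in the $f$-case, edges of $G$ between $\{1,\dots,p-1\}$ and $\{p+2,\dots,k\}$ must run between $B_L$ and $B_R$, and there is no reason the ``bounded set of long edges'' you add contains precisely these. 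The paper's resolution is to forget about embedding pieces of $G$ into the gadgets and instead take $B_1$ and $B_2$ to be complete graphs joined by a complete bipartite graph, i.e.\ $B_1\cup B_2$ is a clique of bounded size. Then every edge of $G$ one might want to realize within or between the blocks is automatically present, the only non-edges of $H$ are those incident to an isolated middle vertex, and each of your three cases reduces to a counting check that the blocks are large enough; the paper chooses $|B_1|=\max(u_2+m_1-3,\,v_1-1)$ and $|B_2|=\max(2k-v_1-m_2-1,\,k-u_2)$ to guarantee this. With that change, the ``medium span,'' ``both endpoints in one gadget,'' and ``$\deg_G(1)>1$'' worries you list all evaporate, and the verification you anticipated as tedious becomes a three-line count. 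So: same plan, correct lower bound, but the gadget choice is the missing idea in part (a).
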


\begin{proof}
	Notice that $ssat_{<}(n,G)\le sat_{<}(n,G)=O(n)$ by Theorem \ref{thm:ordereddich}.
	
    Assume first that at least one of the conditions is not satisfied. First, if $G$ has no minedge then an $H$ semisaturating $G$ cannot have two consecutive isolated vertices, as otherwise connecting two such vertices could not create a new copy of $G$. This implies that $ssat_{<}(n,G) \geq n/4$. If the second (resp. third) condition is not satisfied then there is no isolated vertex in $H$, possibly besides the first (resp. last) vertex, as otherwise connecting the first (resp. last) vertex with an isolated vertex could not create a new copy of $G$. From this we get $ssat_{<}(n,G) \geq (n-1)/2$. 
    
    We assume now that there is a minedge $m_1m_2$, an edge $u_1u_2$ connecting the first vertex, $u_1$, with a degree one vertex $u_2$ and an edge $v_1v_2$ connecting the last vertex, $v_2$, with a degree one vertex $v_1$. For simplicity we regard the vertex set of $G$ to be $[k]$ and thus $m_1$, etc. refer not only to vertices but also they are positive integers (in particular $u_1=1,v_2=k$ and $m_2=m_1+1$).
    For $n$ big enough we construct a graph $H=H_n$ semisaturating $G$. We denote the set of the first $\max(u_2+m_1-3,v_1-1)$ vertices of $H$ by  $B_1$, and the set of the last $\max(2k-v_1-m_2-1,k-u_2)$ vertices by $B_2$. We connect all the vertices in $B_1$ (resp. $B_2$), also we add all edges between $B_1$ and $B_2$ to $H$ and no other edges. $H$ clearly has $O(1)$ edges, as required. We are left to prove that adding an edge to $H$ creates a new copy of $G$. Notice that the only edges that we are able to add must have at least one vertex outside $B_1$ and $B_2$. Thus we connect such an isolated vertex $v$ with an arbitrary vertex $w$.    
    First, if $w$ is among the first $m_1-1$ vertices of $H$ then there are at least $u_2-2$ vertices in $B_1$ between $w$ and $v$. Also, there are at least $k-u_2$ vertices after $v$ in $B_2$. Thus a new copy of $G$ is created such that $wv$ plays the role of $u_1u_2$. Similarly, if $w$ is among the last $k-m_2$ vertices of $H$ then another copy of $G$ is created such that $vw$ plays the role of $v_1v_2$. Finally, in the remaining case a copy of $G$ is created such that the edge connecting $v$ and $w$ plays the role of the minedge $m_1m_2$. 
\end{proof}

\begin{thm}
    For a cyclically ordered graph $C$, $ssat_{\circlearrowright}(n,C)=O(1)$ if and only if $C$ contains a minedge, and $ssat_{\circlearrowright}(n,C)=\Theta(n)$ otherwise.
\end{thm}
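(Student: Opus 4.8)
The plan is to mirror the structure of the proof of Theorem~\ref{thm:semiord}, but the cyclic setting removes the need for the first/last vertex conditions, so only the minedge condition survives. First I would handle the easy direction: if $C$ has no minedge, then any graph $H$ semisaturating $C$ cannot contain two consecutive isolated vertices, since connecting them would have to create a new copy of $C$ in which that edge plays the role of a minedge (it cannot play the role of any other edge, as its endpoints have degree one and no vertex lies strictly between them on one side). Hence at least every other pair of consecutive vertices contributes an incident edge, giving $ssat_{\circlearrowright}(n,C)\ge n/4$, so $ssat_{\circlearrowright}(n,C)=\Theta(n)$ by Theorem~\ref{thm:cyclicdich} (using $ssat_{\circlearrowright}(n,C)\le sat_{\circlearrowright}(n,C)=O(n)$).

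For the other direction, suppose $C$ has a minedge $m_1m_2$ with empty interval $I_{m_1,m_2}$. Write $k=|V(C)|$ and identify the vertices cyclically with $\{1,\dots,k\}$ so that $m_2$ is the clockwise successor of $m_1$. Let $s$ be the minimum length of a cyclic interval of vertices of $C$ that hits every edge of $C$ (as in the proof of Theorem~\ref{thm:cyclicdich}); I can even choose such a hitting interval $I_0$ so that it contains $m_1$ but not $m_2$, or I can just take $I_0$ to be the complement of a maximal edge-free run of vertices — in any case $|I_0|=s\le k$ and $m_2\notin I_0$ is arrangeable because $m_2$ has degree one and $m_1 m_2$ is still hit via $m_1$. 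Now build $H=H_n$ on $n$ vertices: designate a cyclic interval $J$ of $s$ consecutive vertices, put a clique on $J$, connect every vertex of $J$ to every other vertex of $H$, and add no other edges; the remaining $n-s$ vertices are isolated and form one cyclic arc $A$. Then $H$ has $O(n)$... actually $O(1)$ is false here, so instead I place $J$ of size only $s-1$? No — I need $J$ to carry a full copy of $C$ minus the minedge. The clean fix: let $J$ be an interval of $s$ vertices carrying a copy of $C\setminus\{m_1m_2\}$ positioned exactly as in $I_0$, plus the at most $s$ extra edges inside the clique on $J$; $H$ then has only $O(1)$ edges since $|J|=s=O(1)$ and we add no edges incident to $A$ except — wait, we must not connect $J$ to $A$ either, so $H$ has $\le\binom{s}{2}=O(1)$ edges. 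The arc $A$ of isolated vertices then has length $n-s$.

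It remains to check semisaturation: adding any edge $e$ to $H$ must create a new copy of $C$. Any addable edge has at least one endpoint $v$ in $A$ (since $J$ is already a clique and $J$ is joined to nothing outside it — so in fact \emph{every} addable edge has an endpoint in $A$). Let $w$ be its other endpoint. The key point is that $vw$ can be made to play the role of the minedge $m_1m_2$: place the $m_1-1$ vertices of $C$ preceding $m_1$ and the $k-m_2$ vertices following $m_2$ along $A\cup J$ appropriately. More carefully, $C\setminus\{m_1m_2\}$ is an ordered graph on the cyclic interval $J$, and $J$ together with an adjacent stretch of $A$ has plenty of room; by rotating, I can always position a copy of $C$ so that $m_1m_2\mapsto vw$ and all other edges map into the copy of $C\setminus\{m_1 m_2\}$ sitting on $J$ — here I use that $m_1 m_2$ is a minedge, so no edge of $C$ crosses "through" it, meaning the rest of $C$ occupies a single cyclic interval disjoint from the open arc between $m_1$ and $m_2$. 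Since the new edge $vw$ has at least one endpoint in $A$ which was isolated in $H$, this copy of $C$ is genuinely new.

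The main obstacle I expect is the bookkeeping in the last step: making sure that for \emph{every} choice of the addable edge $vw$ (both endpoints in $A$, or one in $A$ and one in $J$, and at both "ends" of the arc $A$) the copy of $C\setminus\{m_1m_2\}$ sitting on $J$ can be simultaneously used while $vw$ realizes the minedge — this requires $J$ to be placed adjacent to $A$ and $|J|$ large enough to also absorb, if needed, a few vertices of $C$ on the "$m_1$ side" or "$m_2$ side" of the minedge. This is exactly the sort of interval-counting argument done for the ordered case in Theorem~\ref{thm:semiord}, and here it is actually slightly simpler because cyclic symmetry means there is no distinguished first or last vertex to worry about; I would set the size of $J$ to $\max$ of the two one-sided counts $\max(m_1+s-2,\,k-m_2+s-1)$ or similar, exactly paralleling the choice of $|B_1|,|B_2|$ there, and then the three-case check ($w$ near the $m_1$-side, $w$ near the $m_2$-side, $w$ in between) goes through verbatim.
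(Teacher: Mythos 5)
Your easy direction (no minedge $\Rightarrow$ linear lower bound, via no two consecutive isolated vertices) is exactly the paper's argument and is correct. For the other direction, you have the right construction in outline — a clique on a cyclic interval $J$ of bounded size, with all remaining vertices isolated — which is precisely what the paper does. But the size of $J$ is the one parameter that matters here, and you never settle on a value that actually works. $|J|=s$ (where $s$ is the minimum length of a hitting interval) is too small: for $C=L_2$ we get $s=2=k-2$, yet if $w$ sits just inside one end of $J$ and $v$ just outside the other end, one arc from $v$ to $w$ contains a single vertex of $J$ and the other contains none, so $vw$ cannot be extended to a minedge of a copy of $C$ inside the clique. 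Your later tentative choice $\max(m_1+s-2,\,k-m_2+s-1)$ is on the order of $k/2+s$, which can likewise be well below what is needed. The paper's choice is $|J|=2k-4$, and this makes the verification trivial: the added edge $vw$ has an isolated endpoint $v\notin J$, so one of the two arcs $I_{v,w}$, $I_{w,v}$ contains at least $\lceil(|J|-1)/2\rceil\ge k-2$ vertices of $J$, and since $J$ is a clique those $k-2$ vertices support any ordered graph on $k-2$ vertices, in particular $C\setminus\{m_1m_2\}$, with $vw$ playing the role of the minedge. Once you see this, your concerns about "positioning a copy of $C\setminus\{m_1m_2\}$ exactly as in $I_0$" and "rotating" a copy of $C$ into place are unnecessary — the clique absorbs all such bookkeeping, and the three-case split you anticipate from the ordered proof collapses to this single counting observation.
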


\begin{proof}
	Notice that $ssat_{\ca}(n,G)\le sat_{\ca}(n,G)=O(n)$ by Theorem \ref{thm:cyclicdich}.
		
    First, if $C$ has no minedge then an $H$ semisaturating $C$ cannot have two consecutive isolated vertices, as otherwise connecting two such vertices could not create a new copy of $G$. This implies that $ssat_{\ca}(n,G) \geq n/4$.
          
    Now, assume that $C$ has $k$ vertices and it contains a minedge. For $n$ big enough we construct a graph $H=H_n$ semisaturating $G$. Take an interval $J$ of the vertices of $H$ of size $2k-4$ and connect all the vertices of $J$ with each other. $H$ has no other edges, i.e., the rest of the vertices are isolated. $H$ clearly has $O(1)$ edges, as required. We are left to prove that adding an edge to $H$ creates a new copy of $G$.
    
    A new edge must connect an isolated vertex $v$ and a vertex $w$ in $H$. As either $I_{v,w}\cap J$ or $I_{w,v}\cap J$ has at least $k-2$ vertices, adding the edge $vw$ creates a copy of $C$ in $H$ in which $vw$ plays the role of a minedge.
\end{proof}

\section{Discussion}\label{sec:discussion}

It would be interesting to determine the order of magnitude of the saturation function for further linked matchings. Perhaps it is not out of reach to fully characterize the linked matchings with bounded saturation function, or at least those which are of the form $\Gamma_{\{m_1,m_2,\dots,m_k\}}$ with $m_i\in\{0,1\}$.

Notice that to a linearly ordered graph naturally corresponds a cyclically ordered graph by extending the linear order of the vertices to a cyclic order (and similarly, for a bipartite ordered graph corresponds an ordered and in turn a cyclically ordered graph). In particular, in the cyclic case one can also regard linked matchings. Considering linked matchings in the cyclic setting did not fit in the scope of this paper, nevertheless, it would be nice to see how differently they behave compared to the ordered case. To this end, it is not hard to check that the witnesses for $\Gamma_{\{0,1,0\}}$ and $\Gamma_{\{1,0,1\}}$ from Section \ref{sec:linked} also avoid these graphs cyclically and thus $sat_{\ca}(n,\Gamma_{\{0,1,0\}})=O(1)$ and $sat_{\ca}(n,\Gamma_{\{1,0,1\}})=O(1)$. Yet for example the witness we had for $\Gamma_{\{1,1,0\}}$ contains a copy of $\Gamma_{\{1,1,0\}}$ in the cyclic setting, so for this graph we do not know the answer in the cyclic case.

Let us now compare the three ordered variants (ordered bipartite, ordered, cyclically ordered):
\begin{enumerate}
	\item Ordered bipartite vs. ordered. Corollary \ref{cor:intchrom} states that for every ordered bipartite graph $G$ we have $sat_<(n,G)=\Theta(n)$, irrespective of what $sat_{\01}(n,G)$ is.
	\item Ordered bipartite vs. cyclic. First, $sat_{\01}(n,X_0)=\Theta(n)$ and $ sat_{\ca}(n,X_0)=\Theta(n)$. Second, $sat_{\01}(n,X_1)=\Theta(n)$ and $ sat_{\ca}(n,X_1)=O(1)$. Third, if $sat_{\01}(n,G)=O(1)$ then $sat_{\ca}(n,G)=\Theta(n)$. Indeed, in this case $G$ is not decomposable (\cite{01sat}, see the Introduction) and so in particular $G$ cannot contain a minedge when the vertices are regarded cyclically and then by Claim \ref{claim:minedgecyclic} we have $sat_{\ca}(n,G)=\Theta(n)$.
	\item Ordered vs. cyclic. First, $sat_{<}(n,X_0)=\Theta(n)$ and $ sat_{\ca}(n,X_0)=\Theta(n)$. Second, $sat_{<}(n,X_1)=\Theta(n)$ and $ sat_{\ca}(n,X_1)=O(1)$. Third, $sat_{<}(n,\Gamma_{\{0,1,0}\})=O(1)$ and $ sat_{\ca}(n,\Gamma_{\{0,1,0\}})=O(1)$. Finally, while we cannot exclude the possibility, we have no example where the ordered saturation is bounded and the cyclically ordered saturation is linear.
\end{enumerate}

The missing case in the ordered vs. cyclic case can be phrased this way:
\begin{prob}\label{prob:ordvscyc}
	Is it true for every ordered graph $G$ that $sat_{\ca}(n,G)=O(sat_{<}(n,G))$?
\end{prob}

A graph showing that the answer is false would be a graph with bounded $sat_{<}$ and linear $sat_{\ca}$. The difficulty lies partially in the fact that we do not know that many types of graphs belonging to any of these two classes.

Notice that $sat_{\ca}(n,G)=sat_<(n,\{G_1,G_2,...,G_k\})$ where $G_i$ is the graph we get by `cutting' the cyclic order of the vertices after the $i$th vertex to get a linear order on the vertices. We can get the graph $G_i$ from $G_1$ by shifting the linear order on the vertices by $i-1$. Thus the above problem can be phrased the following way: is it true that forbidding every shifted version of a graph $G_1$ cannot increase the order of magnitude of the saturation function? 

We did not regard the case when multiple graphs are forbidden. Most probably many of our results generalize to this case, yet in general it would be interesting to see if forbidding multiple graphs can exhibit new behaviours. In particular the following problem (for which the answer may easily be false) is a good first step also to solve Problem \ref{prob:ordvscyc}:

\begin{prob}\label{prob:orderedvscyclic}
	Is it true for every pair of ordered graphs $A,B$ that $sat_{<}(n,\{A,B\})=O(sat_{<}(n,A))$?
\end{prob}

This is true if we replace the saturation function with the extremal function (even for multiple forbidden graphs), in all settings (unordered graphs and the three settings of ordered graphs). Further, while in the case of graphs it is possible that the extremal function of a collection of graphs is actually the minimum of the extremal functions of the members of the collection (this is a well-known open problem of extremal graph theory), when considering $0$-$1$ matrices, a collection of matrices can have an extremal function strictly smaller than the extremal function of any of the members of the collection \cite{tardos_2019}.
For a brief treatment of the saturation problem for a collection of matrices see the Conclusion of \cite{berendsohn2} and for an application to saturation problems on the Boolean poset see \cite{satgrid}.

We don't know if an algorithm exists that always stops that decides for a given (cyclically) ordered graph if its saturation function is bounded. For a more detailed discussion about this computational problem see the Discussion of \cite{01sat}, where it is considered for $0$-$1$ matrices.

Throughout this paper we were interested in vertex-ordered graphs, while we did not consider the case of edge-ordered graphs. These graphs were regarded in the context of extremal problems \cite{gerbner2020turn,tardos_2019}, however, nothing is yet known about their saturation function.

\bigskip
\noindent \textbf{Acknowledgement}

\bigskip

This work was initiated during a visit of the first author at Eötvös Loránd University throughout the Erasmus Programme.
We thank D\"om\"ot\"or P\'alv\"olgyi for encouraging this collaboration and telling us about this problem. Also, for his help in the initial stages of this research, in particular concerning the dichotomy result of ordered graphs and that nested graphs have linear saturation function.

\bibliographystyle{plainurl}
\bibliography{saturation}

\begin{thebibliography}{10}

\bibitem{berendsohn1}
Benjamin~Aram Berendsohn.
\newblock Matrix patterns with bounded saturation function, 2020.
\newblock \href {http://arxiv.org/abs/2012.14717} {\path{arXiv:2012.14717}}.

\bibitem{berendsohn2}
Benjamin~Aram Berendsohn.
\newblock An exact characterization of saturation for permutation matrices,
  2021.
\newblock \href {http://arxiv.org/abs/2105.02210} {\path{arXiv:2105.02210}}.

\bibitem{Brass2003}
Peter Brass, Gyula K{\'a}rolyi, and Pavel Valtr.
\newblock {\em A Tur{\'a}n-type Extremal Theory of Convex Geometric Graphs},
  pages 275--300.
\newblock Springer Berlin Heidelberg, Berlin, Heidelberg, 2003.
\newblock \href {https://doi.org/10.1007/978-3-642-55566-4_12}
  {\path{doi:10.1007/978-3-642-55566-4_12}}.

\bibitem{brualdicao}
Richard~A. Brualdi and Lei Cao.
\newblock Pattern-avoiding (0,1)-matrices and bases of permutation matrices.
\newblock {\em Discrete Applied Mathematics}, 304:196--211, 2021.

\bibitem{conlon2016ordered}
David Conlon, Jacob Fox, Choongbum Lee, and Benny Sudakov.
\newblock Ordered ramsey numbers, 2016.
\newblock \href {http://arxiv.org/abs/1410.5292} {\path{arXiv:1410.5292}}.

\bibitem{graphsatsurvey}
Bryan~L. Currie, Jill~R. Faudree, Ralph~J. Faudree, and John~R. Schmittm.
\newblock A survey of minimum saturated graphs.
\newblock {\em The Electronic Journal of Combinatorics}, DS19, 2021.
\newblock \href {https://doi.org/https://doi.org/10.37236/41}
  {\path{doi:https://doi.org/10.37236/41}}.

\bibitem{2020saturation}
G\'abor Dam\'asdi, Bal{\'a}zs Keszegh, David Malec, Casey Tompkins, Zhiyu Wang,
  and Oscar Zamora.
\newblock Saturation problems in the ramsey theory of graphs, posets and point
  sets.
\newblock {\em European Journal of Combinatorics}, 95:103321, 2021.

\bibitem{dudek2013minimum}
Andrzej Dudek, Oleg Pikhurko, and Andrew Thomason.
\newblock On minimum saturated matrices.
\newblock {\em Graphs and Combinatorics}, 29(5):1269--1286, 2013.

\bibitem{erdoshajnalmoon}
Paul Erd\H{o}s, Andr{\'a}s Hajnal, and John~W. Moon.
\newblock A problem in graph theory.
\newblock {\em The American Mathematical Monthly}, 71(10):1107--1110, 1964.

\bibitem{ferrara2017saturation}
Michael Ferrara, Bill Kay, Lucas Kramer, Ryan~R Martin, Benjamin Reiniger,
  Heather~C Smith, and Eric Sullivan.
\newblock The saturation number of induced subposets of the boolean lattice.
\newblock {\em Discrete Mathematics}, 340(10):2479--2487, 2017.

\bibitem{frankl2020vc}
N{\'o}ra Frankl, Sergei Kiselev, Andrey Kupavskii, and Bal{\'a}zs Patk{\'o}s.
\newblock {VC-saturated set systems}.
\newblock {\em arXiv preprint arXiv:2005.12545}, 2020.

\bibitem{01sat}
Radoslav Fulek and Bal{\'{a}}zs Keszegh.
\newblock Saturation problems about forbidden 0-1 submatrices.
\newblock {\em {SIAM} J. Discret. Math.}, 35(3):1964--1977, 2021.
\newblock \href {https://doi.org/10.1137/20M1376327}
  {\path{doi:10.1137/20M1376327}}.

\bibitem{furedikim}
Zolt\'an F\"uredi and Younjin Kim.
\newblock Cycle-saturated graphs with minimum number of edges.
\newblock {\em Journal of Graph Theory}, 73(2):203--215, 2013.

\bibitem{geneson}
Jesse Geneson.
\newblock Almost all permutation matrices have bounded saturation functions.
\newblock {\em Electron. J. Comb.}, 28(2):P2.16, 2021.

\bibitem{Gerbner2013}
D{\'a}niel Gerbner, Bal{\'a}zs Keszegh, Nathan Lemons, Cory Palmer,
  D{\"o}m{\"o}t{\"o}r P{\'a}lv{\"o}lgyi, and Bal{\'a}zs Patk{\'o}s.
\newblock Saturating sperner families.
\newblock {\em Graphs and Combinatorics}, 29(5):1355--1364, 2013.

\bibitem{gerbner2020turn}
D\'aniel Gerbner, Abhishek Methuku, D\'aniel~T. Nagy, D\"om\"ot\"or
  P\'alv\"olgyi, G\'abor Tardos, and M\'at\'e Vizer.
\newblock Tur\'an problems for edge-ordered graphs, 2020.
\newblock \href {http://arxiv.org/abs/2001.00849} {\path{arXiv:2001.00849}}.

\bibitem{satgrid}
D\'aniel Gerbner, D\'aniel~T. Nagy, Bal\'azs Patk\'os, and M\'at\'e Vizer.
\newblock Forbidden subposet problems in the grid.
\newblock {\em Discrete Mathematics}, 345(3):112720, 2022.

\bibitem{kaszonyituza}
L\'aszl\'o K\'aszonyi and Zsolt Tuza.
\newblock Saturated graphs with minimal number of edges.
\newblock {\em Journal of Graph Theory}, 10(2):203--210, 1986.

\bibitem{keszegh2020induced}
Bal{\'a}zs Keszegh, Nathan Lemons, Ryan~R. Martin, D{\"o}m{\"o}t{\"o}r
  P{\'a}lv{\"o}lgyi, and Bal{\'a}zs Patk{\'o}s.
\newblock Induced and non-induced poset saturation problems.
\newblock {\em Journal of Combinatorial Theory, Series A}, 184:105497, 2021.

\bibitem{Pach2006}
J{\'a}nos Pach and G{\'a}bor Tardos.
\newblock Forbidden paths and cycles in ordered graphs and matrices.
\newblock {\em Israel Journal of Mathematics}, 155(1):359--380, Dec 2006.
\newblock \href {https://doi.org/10.1007/BF02773960}
  {\path{doi:10.1007/BF02773960}}.

\bibitem{dompc}
D\"om\"ot\"or P\'alv\"olgyi.
\newblock Personal communication, 2021.

\bibitem{tardos_2019}
G\'abor Tardos.
\newblock {\em Extremal theory of vertex or edge ordered graphs}, pages
  221--236.
\newblock London Mathematical Society Lecture Note Series. Cambridge University
  Press, 2019.

\end{thebibliography}
\end{document}